\newcommand{\be}{\begin{equation}}
\newcommand{\ee}{\end{equation}}
 \theoremstyle{plain}    
 \newtheorem{thm}{Theorem}[section]
 \numberwithin{equation}{section} 
 \numberwithin{figure}{section} 
 \theoremstyle{plain}
 \newtheorem{prop}[thm]{Proposition} 
 \theoremstyle{plain}    
 \newtheorem{lem}[thm]{Lemma} 
 \theoremstyle{plain}    
 \theoremstyle{plain}    
 \theoremstyle{definition}
 \newtheorem{rem}[thm]{Remark} 
 \theoremstyle{definition}
\begin{document}
\newcommand{\nwc}{\newcommand}
\nwc{\nwt}{\newtheorem}
\nwt{coro}{Corollary}
\nwt{ex}{Example}


\nwc{\mf}{\mathbf} 
\nwc{\blds}{\boldsymbol} 
\nwc{\ml}{\mathcal} 


\nwc{\lam}{\lambda}
\nwc{\om}{\omega}
\nwc{\del}{\delta}
\nwc{\Del}{\Delta}
\nwc{\Lam}{\Lambda}
\nwc{\elll}{\ell}

\nwc{\IA}{\mathbb{A}} 
\nwc{\IB}{\mathbb{B}} 
\nwc{\IC}{\mathbb{C}} 
\nwc{\ID}{\mathbb{D}} 
\nwc{\IE}{\mathbb{E}} 
\nwc{\IF}{\mathbb{F}} 
\nwc{\IG}{\mathbb{G}} 
\nwc{\IH}{\mathbb{H}} 
\nwc{\IN}{\mathbb{N}} 
\nwc{\IP}{\mathbb{P}} 
\nwc{\IQ}{\mathbb{Q}} 
\nwc{\IR}{\mathbb{R}} 
\nwc{\IS}{\mathbb{S}} 
\nwc{\IT}{\mathbb{T}} 
\nwc{\IZ}{\mathbb{Z}} 
\def\bbbone{{\mathchoice {1\mskip-4mu {\rm{l}}} {1\mskip-4mu {\rm{l}}}
{ 1\mskip-4.5mu {\rm{l}}} { 1\mskip-5mu {\rm{l}}}}}
\def\bbleft{{\mathchoice {[\mskip-3mu {[}} {[\mskip-3mu {[}}{[\mskip-4mu {[}}{[\mskip-5mu {[}}}}
\def\bbright{{\mathchoice {]\mskip-3mu {]}} {]\mskip-3mu {]}}{]\mskip-4mu {]}}{]\mskip-5mu {]}}}}
\nwc{\setK}{\bbleft 1,K \bbright}
\nwc{\setN}{\bbleft 1,\cN \bbright}


\nwc{\va}{{\bf a}}
\nwc{\vb}{{\bf b}}
\nwc{\vc}{{\bf c}}
\nwc{\vd}{{\bf d}}
\nwc{\ve}{{\bf e}}
\nwc{\vf}{{\bf f}}
\nwc{\vg}{{\bf g}}
\nwc{\vh}{{\bf h}}
\nwc{\vi}{{\bf i}}
\nwc{\vI}{{\bf I}}
\nwc{\vj}{{\bf j}}
\nwc{\vk}{{\bf k}}
\nwc{\vl}{{\bf l}}
\nwc{\vm}{{\bf m}}
\nwc{\vM}{{\bf M}}
\nwc{\vn}{{\bf n}}
\nwc{\vo}{{\it o}}
\nwc{\vp}{{\bf p}}
\nwc{\vq}{{\bf q}}
\nwc{\vr}{{\bf r}}
\nwc{\vs}{{\bf s}}
\nwc{\vt}{{\bf t}}
\nwc{\vu}{{\bf u}}
\nwc{\vv}{{\bf v}}
\nwc{\vw}{{\bf w}}
\nwc{\vx}{{\bf x}}
\nwc{\vy}{{\bf y}}
\nwc{\vz}{{\bf z}}
\nwc{\bal}{\blds{\alpha}}
\nwc{\bep}{\blds{\epsilon}}
\nwc{\bom}{\blds{\omega}}
\nwc{\barbep}{\overline{\blds{\epsilon}}}
\nwc{\bnu}{\blds{\nu}}
\nwc{\bmu}{\blds{\mu}}



\nwc{\bk}{\blds{k}}
\nwc{\bm}{\blds{m}}
\nwc{\bM}{\blds{M}}
\nwc{\bp}{\blds{p}}
\nwc{\bq}{\blds{q}}
\nwc{\bn}{\blds{n}}
\nwc{\bv}{\blds{v}}
\nwc{\bw}{\blds{w}}
\nwc{\bx}{\blds{x}}
\nwc{\bxi}{\blds{\xi}}
\nwc{\by}{\blds{y}}
\nwc{\bz}{\blds{z}}


\nwc{\cA}{\ml{A}}
\nwc{\cB}{\ml{B}}
\nwc{\cC}{\ml{C}}
\nwc{\cD}{\ml{D}}
\nwc{\cE}{\ml{E}}
\nwc{\cF}{\ml{F}}
\nwc{\cG}{\ml{G}}
\nwc{\cH}{\ml{H}}
\nwc{\cI}{\ml{I}}
\nwc{\cJ}{\ml{J}}
\nwc{\cK}{\ml{K}}
\nwc{\cL}{\ml{L}}
\nwc{\cM}{\ml{M}}
\nwc{\cN}{\ml{N}}
\nwc{\cO}{\ml{O}}
\nwc{\cP}{\ml{P}}
\nwc{\cQ}{\ml{Q}}
\nwc{\cR}{\ml{R}}
\nwc{\cS}{\ml{S}}
\nwc{\cT}{\ml{T}}
\nwc{\cU}{\ml{U}}
\nwc{\cV}{\ml{V}}
\nwc{\cW}{\ml{W}}
\nwc{\cX}{\ml{X}}
\nwc{\cY}{\ml{Y}}
\nwc{\cZ}{\ml{Z}}


\nwc{\tA}{\widetilde{A}}
\nwc{\tB}{\widetilde{B}}
\nwc{\tE}{E^{\vareps}}
\nwc{\tk}{\tilde k}
\nwc{\tN}{\tilde N}
\nwc{\tP}{\widetilde{P}}
\nwc{\tQ}{\widetilde{Q}}
\nwc{\tR}{\widetilde{R}}
\nwc{\tV}{\widetilde{V}}
\nwc{\tW}{\widetilde{W}}
\nwc{\ty}{\tilde y}
\nwc{\teta}{\tilde \eta}
\nwc{\tdelta}{\tilde \delta}
\nwc{\tlambda}{\tilde \lambda}
\nwc{\ttheta}{\tilde \theta}
\nwc{\tvartheta}{\tilde \vartheta}
\nwc{\tPhi}{\widetilde \Phi}
\nwc{\tpsi}{\tilde \psi}

\nwc{\fA}{\mathfrak{A}}
\nwc{\fN}{\mathfrak{N}}
\nwc{\fM}{\mathfrak{M}}
\nwc{\fP}{\frak{P}}
\nwc{\fG}{\frak{G}}
\nwc{\fK}{\frak{K}}
\nwc{\fH}{\frak{H}}

\nwc{\To}{\longrightarrow} 

\nwc{\ad}{\rm ad}
\nwc{\eps}{\epsilon}
\nwc{\ep}{\epsilon}
\nwc{\vareps}{\varepsilon}

\def\ep{\epsilon}
\def\al{\alpha}
\def\tr{{\rm tr}}
\def\Tr{{\rm Tr}}
\def\i{{\rm i}}
\def\mi{{\rm i}}
\def\e{{\rm e}}
\def\sq2{\sqrt{2}}
\def\sqn{\sqrt{N}}
\def\vol{\mathrm{vol}}
\def\defi{\stackrel{\rm def}{=}}
\def\slog{\stackrel{\rm log}{\simeq}}
\def\llog{\stackrel{\rm log}{\lesssim}}
\def\glog{\stackrel{\rm log}{\gtrsim}}
\def\t2{{\mathbb T}^2}
\def\s2{{\mathbb S}^2}
\def\hn{\mathcal{H}_{N}}
\def\shbar{\sqrt{\hbar}}
\def\A{\mathcal{A}}
\def\N{\mathbb{N}}
\def\T{\mathbb{T}}
\def\R{\mathbb{R}}
\def\Z{\mathbb{Z}}
\def\C{\mathbb{C}}
\def\O{\mathcal{O}}
\def\Sp{\mathcal{S}_+}
\def\Lap{\triangle}
\nwc{\lap}{\bigtriangleup}
\nwc{\rest}{\restriction}
\nwc{\Diff}{\operatorname{Diff}}
\nwc{\diff}{\operatorname{diff}}
\nwc{\diam}{\operatorname{diam}}
\nwc{\Res}{\operatorname{Res}}
\nwc{\Spec}{\operatorname{Spec}}
\nwc{\Vol}{\operatorname{Vol}}
\nwc{\Op}{\operatorname{Op}}
\nwc{\supp}{\operatorname{supp}}
\nwc{\Span}{\operatorname{span}}

\nwc{\dia}{\varepsilon}
\nwc{\cut}{f}
\nwc{\qm}{u_\hbar}

\def\hto0{\xrightarrow{\hbar\to 0}}
\def\htoo{\stackrel{h\to 0}{\longrightarrow}}
\def\rto0{\xrightarrow{r\to 0}}
\def\rtoo{\stackrel{r\to 0}{\longrightarrow}}
\def\ntoinf{\xrightarrow{n\to +\infty}}
\def\TS{\mathcal{T}^*S}
\def\TY{\mathcal{T}^*Y}

\providecommand{\abs}[1]{\lvert#1\rvert}
\providecommand{\norm}[1]{\lVert#1\rVert}
\providecommand{\set}[1]{\left\{#1\right\}}

\nwc{\la}{\langle}
\nwc{\ra}{\rangle}
\nwc{\lp}{\left(}
\nwc{\rp}{\right)}

\nwc{\bequ}{\begin{equation}}
\nwc{\ben}{\begin{equation*}}
\nwc{\bea}{\begin{eqnarray}}
\nwc{\bean}{\begin{eqnarray*}}
\nwc{\bit}{\begin{itemize}}
\nwc{\bver}{\begin{verbatim}}

%\nwc{\eal}{\end{align}}
\nwc{\eequ}{\end{equation}}
\nwc{\een}{\end{equation*}}
\nwc{\eea}{\end{eqnarray}}
\nwc{\eean}{\end{eqnarray*}}
\nwc{\eit}{\end{itemize}}
\nwc{\ever}{\end{verbatim}}

\newcommand{\defeq}{\stackrel{\rm{def}}{=}}
\newcommand{\Lim}{\mathop{\longrightarrow}\limits}

\title[Damped wave equation]
{Spectral deviations for the damped wave equation}

\author[N. Anantharaman]{Nalini Anantharaman}
\address{CMLS, \'Ecole Polytechnique, 91128 Palaiseau Cedex, France}
\email{nalini@math.polytechnique.fr}
\begin{abstract} We prove a Weyl-type fractal upper bound for the spectrum of the damped wave equation, on a negatively curved compact manifold. It is known that most of the eigenvalues have an imaginary part close
to the average of the damping function.
We count the number of eigenvalues in a given horizontal strip deviating from this typical behaviour; the exponent that appears naturally is the `entropy' that gives the deviation rate from the Birkhoff ergodic theorem for the geodesic flow. A Weyl-type lower bound is still far from reach; but in the particular case of arithmetic surfaces, and for a strong enough damping, we can use the trace formula to prove a result going in this direction.
\end{abstract}
 
\maketitle

\section{Results and questions about the spectrum of the damped wave equation}
Let $(M, g)$ be a smooth compact Riemannian manifold without boundary.
Let $a$ be a $C^\infty$ {\em real valued} function on $M$.
We study the ``damped\footnote{The term ``damped'' applies to the case when $a\geq 0$, that is to say, when the energy is decreasing. In this paper the sign of $a$ will be of no importance.} wave equation'',
\begin{equation}
 \label{e:DWE}
\left(\partial^2_t-\Lap+2a(x)\partial_t\right)v=0
\end{equation}
for $t\in \IR$ and $x\in M$. We shall be interested in the stationary solutions, that is, solutions of the form
$v(t, x)=e^{it\tau}u(x)$ for some $\tau\in \IC$. This means that $u$ must satisfy
\begin{equation}
 \label{e:SDWE}(-\lap-\tau^2+2ia\tau)u=0.
 \end{equation}
Equivalently, $\tau$ is an eigenvalue of the operator
$$\left( \begin{array}{cc} 

0 & I \\
-\lap & 2ia \\
 
 \end{array}  \right)$$
 acting on $H^1(M)\times L^2(M)$. For $a=0$ this operator is the wave operator, an anti-selfadjoint operator;
 but for $a\not=0$ the operator is not normal anymore. It is known that its spectrum is discrete
 and consists of a sequence $(\tau_n)$ with $\Im m(\tau_n)$ bounded and $|\Re e(\tau_n)|\To +\infty$ (see Section \ref{s:spec}). One sees easily that $\Im m(\tau_n)\in [2\min(\inf a, 0), 2\max(\sup a, 0)]$ if $\Re e(\tau_n)=0$, and $\Im m(\tau_n)\in [\inf a, \sup a]$ if $\Re e(\tau_n)\not=0$ \cite{Sj00}. One can also note, obviously, that $-\bar \tau_n$ is an eigenvalue if $\tau_n$ is~: the spectrum is symmetric with respect to the imaginary axis.
 
 \subsection{Background}
 Motivated by questions from control theory, Lebeau \cite{Leb93} was interested in the so-called ``stabilization problem''~: define
 $$\rho=\sup\left\{\beta, \,\exists\, C,\, E(u_t)\leq Ce^{-\beta t}E(u_0)\mbox{ for every solution } u
 \mbox{ of  \eqref{e:DWE}}\right\}$$
 where the energy functional is $E(u)=\int_M |\nabla u|^2$. The stabilization problem consists in finding
 some damping function $a$ (necessarily nonnegative) such that $\rho>0$. Lebeau identified
 $$\rho=2\min\left\{D(0), C(\infty)\right\}$$
 where $D(0)$ is a sort of ``spectral gap''~:
$$D(0)=\inf \left\{\Im m(\tau_n), \tau_n\not= 0\right\}$$
 and $C(\infty)$ is defined
 in terms of the Birkhoff averages of $a$ along the geodesic flow 
 $$G^t: T^* M\To T^* M,$$
 $$C(\infty)=\lim_{t\To +\infty}\inf_{\rho\in T^* M}\frac 1t\int_0^t a(G^s\rho)ds.$$
 Lebeau also showed, on an example, that it is possible to have a spectral gap
 ($D(0)>0$) but no exponential damping ($\rho=0$)~: this surprising phenomenon is typical of non normal operators.
 
 Markus and Matsaev \cite{MM82} proved an analogue of Weyl's law, also found independently later
 on
 by Sj\"ostrand \cite{Sj00}~:
 \begin{equation}\label{e:Weyl}\sharp\left\{ n,0\leq \Re e (\tau_n)\leq \lambda
\right\}=
 \left(\frac{\lambda}{2\pi}\right)^d\left(\int_{p^{-1}]0, 1[}dxd\xi +\cO(\lambda^{-1})  \right)
 \end{equation}
 where $d$ is the dimension of $M$, $p$ is the principal symbol of $-\lap$, namely the function
 $p(x, \xi)=g_x(\xi, \xi)$ defined on the cotangent bundle $T^* M$, and $dxd\xi$ is the Liouville measure
 on $T^* M$ (coming from its canonical symplectic structure).
 
 It is a natural question to ask about the distribution of the imaginary parts $\Im m(\tau_n)$ in the interval
 $[\inf a, \sup a]$. For non normal operators, obtaining fine information on the distribution of the spectrum
 is much harder than for normal operators -- one of the reason being the absence of continuous (or even smooth) functional calculus. Another related difficulty is that there is no general relation between the norm of the resolvent and the distance to the spectrum. Some techniques have being developed to obtain upper bounds on the density of eigenvalues, but lower bounds are notoriously more difficult.
 
 Assuming that the geodesic flow is ergodic with respect to the Liouville measure on an energy layer, Sj\"ostrand proved that most of the $\tau_n$ have asymptotically an imaginary part
 $\Im m(\tau_n)\sim\bar a$, where $\bar a$ denotes the average of $a$ on the energy layer $S^* M=\left\{(x, \xi)\in T^* M,g_x(\xi, \xi)=1\right\} $
 with respect to the Liouville measure~:
 \begin{thm}\cite{Sj00} Assume that the geodesic flow is ergodic with respect to the Liouville measure on $S^*M$. For every $C>0$, for every $\eps >0$, we have
 $$\sharp\left\{ n,\lambda \leq \Re e (\tau_n)\leq \lambda+C, \Im m(\tau_n)\not\in[\bar a-\eps, \bar a+\eps]\right\}=
  o(\lambda^{d-1})$$
  as $\lambda$ goes to infinity.
 \end{thm}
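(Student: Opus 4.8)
The plan is to reduce the quadratic pencil to a non‑self‑adjoint perturbation of $\sqrt{-\lap}$, replace the damping by its Birkhoff average along the geodesic flow, and estimate a variance. Pairing \eqref{e:SDWE} with a normalised eigenfunction $u_n$ gives $\|\nabla u_n\|^2-\tau_n^2+2i\tau_n\langle u_n,au_n\rangle=0$; since $\langle u_n,au_n\rangle\in\IR$ and $\|\nabla u_n\|^2$ is large when $|\tau_n|$ is, solving this quadratic equation yields the exact identity $\Im m(\tau_n)=\langle u_n,au_n\rangle$ for all but finitely many $n$, and shows that $u_n$ is a quasimode of $\sqrt{-\lap}$ at frequency $\|\nabla u_n\|\sim\Re e(\tau_n)$ with error $\cO(1)$, i.e. $u_n$ is microlocalised on $S^*M$ at frequency $\approx\lambda$. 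By \eqref{e:Weyl} the number of $\tau_n$ with $\Re e(\tau_n)\in[\lambda,\lambda+C]$ is $\asymp\lambda^{d-1}$, so by Chebyshev's inequality it suffices to prove the variance bound
\[
\sum_{n:\ \Re e(\tau_n)\in[\lambda,\lambda+C]}\big|\langle u_n,au_n\rangle-\bar a\big|^2=o(\lambda^{d-1}).
\]

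For the averaging, fix $T>0$ and write $a=a_T+Xb_T$, where $a_T=\frac1T\int_0^T a\circ G^s\,ds$ is the Birkhoff average, $X$ generates the geodesic flow, and $b_T$ is bounded. For $\Re e(\tau)$ large the pencil is spectrally equivalent, via the factorisation $-\lap-\tau^2+2ia\tau=\big((\sqrt{-\lap}-\tau)+2ia\tau(\sqrt{-\lap}+\tau)^{-1}\big)(\sqrt{-\lap}+\tau)$, to $\sqrt{-\lap}+i\Op(a)$ modulo operators of order $-1$; since $Xb_T$ is the principal symbol of $\tfrac1i[\sqrt{-\lap},\Op(b_T)]$, conjugating by the bounded invertible operator $e^{\Op(b_T)}$ and iterating a quantum normal‑form step replaces $\Op(a)$ by $\Op(a_T)$ up to $\cO_T(\langle\tau\rangle^{-1})$, without changing the spectrum. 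The conjugated eigenfunctions $\tilde u_n$ are still microlocalised at frequency $\approx\lambda$ and satisfy $\Im m(\tau_n)=\langle\tilde u_n,\Op(a_T)\tilde u_n\rangle+\cO_T(1/\lambda)$; Cauchy--Schwarz and the symbol calculus give $|\langle\tilde u_n,\Op(a_T-\bar a)\tilde u_n\rangle|^2\le\langle\tilde u_n,\Op(|a_T-\bar a|^2)\tilde u_n\rangle+\cO_T(1/\lambda)$. Next one localises to a bounded‑width spectral window $W$ of $\sqrt{-\lap}$ around $\lambda$ and performs a Grushin (Feshbach--Schur) reduction: the $\tau_n$ with $\Re e(\tau_n)\in[\lambda,\lambda+C]$ are, via the argument principle applied to the Grushin determinant, the eigenvalues of an effective Hamiltonian which, up to a feedback term of norm $\cO(|W|^{-1})$ coming from outside $W$, has the form $D+iA$ with $D=\Pi_W\sqrt{-\lap}\Pi_W$ and $A=\Pi_W\Op(a_T)\Pi_W$ both self‑adjoint. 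The classical majorisation inequality — a consequence of Ky Fan's theorem applied to $\pm i(D+iA)$ — states that the imaginary parts of the eigenvalues of $D+iA$ are majorised by the eigenvalues of $A$; applying it to the convex function $x\mapsto(x-\bar a)^2$ gives
\[
\sum_{n:\ \Re e(\tau_n)\in[\lambda,\lambda+C]}\big|\Im m(\tau_n)-\bar a\big|^2\ \le\ \Tr\big((A-\bar a)^2\big)+(\text{reduction errors})\ \le\ \Tr\big(\Pi_W\Op(|a_T-\bar a|^2)\Pi_W\big)+\cdots.
\]

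To finish, estimate $\Tr(\Pi_W\Op(|a_T-\bar a|^2)\Pi_W)$ by dominating $\Pi_W$ by a multiple of a smooth frequency cut‑off $\chi(\sqrt{-\lap}-\lambda)$ and invoking the Weyl law for smooth cut‑offs: this trace is $\lesssim |W|\,\lambda^{d-1}\,\overline{|a_T-\bar a|^2}=|W|\,\lambda^{d-1}\,\|a_T-\bar a\|_{L^2(S^*M)}^2$, with an $o(\lambda^{d-1})$ error. Dividing by $\lambda^{d-1}$, letting $\lambda\to\infty$, then $T\to\infty$ — for which $a_T\to\bar a$ in $L^2(S^*M)$ by the von Neumann mean ergodic theorem, the single point at which the ergodicity hypothesis enters — and then letting the window width grow, the left‑hand side of the variance bound is $o(\lambda^{d-1})$. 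Chebyshev's inequality then yields the theorem.

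The main obstacle is the non‑normality, concentrated in the Grushin step: a priori the compression $\Pi_W\Op(a)\Pi_W$ to a fixed‑width window does \emph{not} have its spectrum concentrated near $\bar a$, which is exactly why the dynamical replacement of $a$ by its time average $a_T$ is indispensable; moreover, for a non‑self‑adjoint effective Hamiltonian one cannot read the imaginary parts of the eigenvalues off a naive perturbation argument, so one must count them via the argument principle / Jensen's formula applied to the Grushin determinant, control the feedback operator from outside $W$ uniformly in $\lambda$ (using that only its self‑adjoint imaginary part affects $\Tr((A-\bar a)^2)$), and organise the estimates so that all errors vanish in the correct order of limits ($\lambda\to\infty$, then $T\to\infty$, then $|W|\to\infty$). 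Morally, ergodicity forbids invariant subsets of $S^*M$ on which the damping stays far from its mean, and hence forbids the concentration of damped‑wave modes that would be needed to produce $\gg\lambda^{d-1}$ frequencies with atypical imaginary part.
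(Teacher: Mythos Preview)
Your averaging step and the identification of where ergodicity enters (via $a_T\to\bar a$ in $L^2(S^*M)$) match Sj\"ostrand's proof; so does the reduction to a first-order operator $\sqrt{-\lap}+i\Op(a)$ modulo lower order. The divergence is in the middle step, and there is a genuine gap there.

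The Ky~Fan majorisation inequality you invoke is a statement about the eigenvalues of a \emph{fixed} matrix $D+iA$: via Schur triangularisation, the imaginary parts of its eigenvalues are the diagonal of a Hermitian matrix unitarily equivalent to $A$, hence majorised by the spectrum of $A$. But the $\tau_n$ are not the eigenvalues of $D+iA$; they are the zeros of $z\mapsto\det E_{-+}(z)$ where $E_{-+}(z)=z-D-iA+F(z)$ and the feedback $F(z)$ genuinely depends on $z$. For a non-normal $D+iA$ an $\cO(|W|^{-1})$ perturbation can displace eigenvalues by $\cO(1)$ (pseudospectral instability), so there is no direct comparison between the $\tau_n$ and the eigenvalues of $D+iA$. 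Your remark that ``only the self-adjoint imaginary part of the feedback affects $\Tr((A-\bar a)^2)$'' does not help: the quantity $\Tr((A-\bar a)^2)$ is fine, what is missing is any link between $\sum_n|\Im m(\tau_n)-\bar a|^2$ and that trace. Equivalently, your Cauchy--Schwarz step produces $\sum_n\langle\tilde u_n,\Op(|a_T-\bar a|^2)\tilde u_n\rangle$, and because the $\tilde u_n$ are \emph{not} orthonormal this is not bounded by any trace against a spectral projector --- which is precisely the non-self-adjoint obstruction you yourself flag in the last paragraph, but do not overcome.

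Sj\"ostrand's route (described in \S\S\ref{s:ave}--\ref{s:Jensen} and the Appendix for the fractal refinement) avoids this entirely. After the same averaging, one does not try to estimate a variance: instead one \emph{truncates} the symbol, replacing $q^T=\langle a\rangle_T$ by $\tilde q^T\le\bar a+\eps/2$ on $p_o^{-1}(1/2)$. The modified operator $\tilde\cP_T$ then has \emph{no} eigenvalues with $\Im m(z)/\hbar>\bar a+\eps$, by an elementary dissipativity (resolvent) bound --- no spectral comparison is needed. The difference $\cP_T-\tilde\cP_T$ is trace class with $\|\cP_T-\tilde\cP_T\|_1\lesssim\hbar^{1-d}L_{\frac12}\{\langle a\rangle_T>\bar a+\eps/2\}$, and this measure is $o(1)$ as $T\to\infty$ by ergodicity. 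Writing $\cP_T-z=(\tilde\cP_T-z)(1+K(z))$ and applying Jensen's inequality to the holomorphic function $\det(1+K(z))$ gives the $o(\hbar^{1-d})=o(\lambda^{d-1})$ count directly. The point is that trace-class perturbation plus a determinant/Jensen argument is robust under non-normality in a way that eigenvector-based variance estimates are not.
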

 
 \begin{rem} If $C$ is not too small, one sees from \eqref{e:Weyl} that there exists $\tilde C>0$
 such that
 $$\sharp\left\{ n,\lambda \leq \Re e (\tau_n)\leq \lambda+C\right\}\geq
 \tilde C \lambda^{d-1}$$
 for large $\lambda$.
  Thus, the theorem does say that a majority of the $\tau_n$ have $\Im m(\tau_n)\in
 [\bar a-\eps, \bar a+\eps]$.
 \end{rem}
 
 \begin{rem} More generally, without the ergodicity assumption, \cite{Sj00} proves the following results.
 Introduce the functions on $T^* M$, $$\langle a\rangle_T=\frac{1}{T}\int_{-T/2}^{T/2}a\circ G^s ds,$$
$$\langle a\rangle_\infty=\lim_{T\To+\infty}\langle a\rangle_T,$$ and the real numbers
 $$a_+=\lim_{T\To+\infty}\sup_{S^*M}\langle a\rangle_T,$$
 $$a_-=\lim_{T\To+\infty}\inf_{S^*M}\langle a\rangle_T.$$
 The function $\langle a\rangle_\infty$ is well defined Liouville--almost--everywhere, by the Birkhoff theorem.
 
 Lebeau \cite{Leb93} proves that for any $\eps>0$, there are at most finitely many $n$ with $\tau_n\not\in [a_--\eps, a_++\eps]$, and Sj\"ostrand \cite{Sj00} proves that
 $$\sharp\left\{ n,\lambda \leq \Re e (\tau_n)\leq \lambda+C, \Im m(\tau_n)\not\in[ess\inf \langle a\rangle_\infty -\eps,ess\sup \langle a\rangle_\infty+\eps]\right\}=
  o(\lambda^{d-1}).$$
 \end{rem}
 
  \subsection{Semiclassical formulation\label{p:semiclass}}
  As in \cite{Sj00} we reformulate the problem using semiclassical notations.
In doing so, we also generalize a little the problem. We 
introduce a semiclassical parameter $0<\hbar\ll 1$ and will be interested in the eigenvalues $\tau$
such that $\hbar\tau\Lim_{\hbar\To 0}1$.
If we put $\tau=\frac{\lambda}\hbar$ with
  $\lambda$ close to $1$, then equation \eqref{e:SDWE} can be rewritten as 
  \begin{equation}
 \label{e:SDWEh}\left(-\frac{\hbar^2\lap}2-\frac{\lambda^2}2+i\hbar\lambda a\right)u=0,
 \end{equation}
or \begin{equation}\label{e:spec}(\cP-z)u=0
\end{equation}
if we put $z=\frac{\lambda^2}2$,
and \begin{equation}\label{e:genop}\cP=\cP(z)= P+i\hbar Q(z),\qquad  P=-\frac{\hbar^2\lap}2, \qquad  Q(z)=a\sqrt z.\end{equation}The parameter $z$ is close to $E=\frac12$.

More generally, we will consider a spectral problem of the form
\eqref{e:spec}
where $$\cP=\cP(z)= P+i\hbar Q(z),\qquad  P=-\frac{\hbar^2\lap}2,$$
$z\in \Omega=e^{i]-s_0, s_0[}]E_{\min}, E_{\max}[$, with $0<E_{\min}<\frac12<E_{\max}<+\infty$, $0<s_0<\frac\pi{4}$. We will assume that
 $ Q(z)\in \Psi DO^1 $ is a pseudodifferential operator that depends holomorphically on $z\in \Omega$, and that $ Q$ is formally self-adjoint for $z$ real (the definition of our operator classes is given in Section \ref{s:symbols}).
 
We denote $$\Sigma=\Sigma_\hbar=\left\{z\in\Omega, \,\exists u, (\cP(z)-z)u=0\right\}.$$

The operator $ P$ has principal symbol $p_o(x, \xi)=\frac{g_x(\xi, \xi)}2$, and $ Q(z)$ has principal symbol $q_z(x, \xi)$, taking real values for $z$ real. In these notations, the previous results read as follows~: for any $E_{\min}<E_1\leq E_2<E_{\max}$,
\begin{equation}\label{e:Weyl2}\sharp\left\{ z\in\Sigma, E_1\leq \Re e(z)\leq E_2\right\}=\frac{1}{(2\pi\hbar)^d}\left[\int_{p_o^{-1}[E_1, E_2]}dxd\xi+\cO({\hbar})\right]
\end{equation}
(uniformly for all $c>0$ and for all $E_1, E_2$ such that $|E_2-E_1|\geq 2c\hbar$, $E_1$ and $E_2$
staying away from $E_{\min}, E_{\max}$).
One can show that $\frac{\Im m(z)}{\hbar}$ (for $z\in\Sigma$) has to stay bounded if $E_1, E_2$ stay in a bounded interval.
Fix some $c>0$, and take $E_1=E-c\hbar$ and $E_2=E+c\hbar$.
Let us denote 
$$q^-_E=\lim_{T\To+\infty}\inf_{p_o^{-1}\left\{E\right\}}\langle q_E\rangle_T,$$
$$q^+_E=\lim_{T\To+\infty}\sup_{p_o^{-1}\left\{E\right\}}\langle q_E\rangle_T,$$
then we have \cite{Sj00}
\begin{equation}\label{e:infsup}q^-_E+o(1)\leq\frac{\Im m(z)}{\hbar}\leq q^+_E+o(1)
\end{equation}
for all $z\in\Sigma$ such that $E-c\hbar\leq \Re e(z)\leq E+c\hbar$.
Finally, denote $\bar q_E$ the average of $q_E$ on the energy layer
$p_o^{-1}\left\{E\right\}$. Assuming that the geodesic flow is ergodic on $p_o^{-1}\left\{E\right\}$, then for any $\eps>0$, any $c>0$,
\begin{equation}\label{e:LGN}\sharp\left\{ z\in\Sigma, E-c\hbar\leq \Re e(z)\leq
 E +c\hbar, \frac{\Im m(z)}{\hbar}\not\in [\bar q_E-\eps, \bar q_E+\eps]\right\}=o(\hbar^{1-d}).
\end{equation}

The method of \cite{Sj00} allows to treat the case of a more general $ P$ (and thus a more general
Hamiltonian flow than the geodesic flow), and also to deal with the case when the flow is not ergodic. However, in this paper we will stick to the case of an ergodic geodesic flow; we will add even stronger assumptions in the next paragraph.

 \subsection{Questions, and a few results}
We try to give (partial) answers to the three natural questions~:

{\bf(Q1) (Fractal Weyl law)} Let $I$ be an interval that does not contain $\bar q_E$. Can we describe in a finer way the asymptotic behaviour for $$\sharp\left\{ z, E-c\hbar\leq \Re e(z)\leq E +c\hbar, \frac{\Im m(z)}\hbar\in I\right\} \;?$$
For instance, can we find
$$\lim_{\hbar\To 0} \frac{\log \sharp\left\{ z\in\Sigma, E-c\hbar\leq \Re e(z)\leq E +c\hbar, \frac{\Im m(z)}\hbar\in I\right\} }{\log\hbar} \;?$$

{\bf(Q2) (Inverse problem)}  Suppose we know everything about $ P$
and about the dynamics of the geodesic flow, but that $ Q$ is unknown. To what extent does the knowledge of the eigenfrequencies $\left\{ z\in\Sigma, E-c\hbar\leq \Re e(z)\leq E +c\hbar\right\}$ determine
$q_E$ ?

Replacing $\cP$ by $ B^{-1}\cP B$, where $ B$
is an elliptic pseudodifferential operator with positive principal symbol $b$, amounts to replacing $q$
by $q-\left\{p_o, \log b\right\}$, where $\left\{., .\right\}$ stands for the Poisson bracket on $T^* M$.
Two smooth functions $f$ and $g$ on $T^*M$ are said to be cohomologous (with respect to the geodesic flow) if there exists a third smooth function $h$ such that $f=g+\left\{p_o, h\right\}$. This defines an equivalence relation, we write
$f\sim_{p_o} g$.

It is thus more natural to ask~: 

{\bf(Q2')} Does the knowledge of the eigenfrequencies $\left\{ z\in\Sigma, E-c\hbar\leq \Re e(z)\leq E +c\hbar\right\}$ determine the {\em cohomology class} of
$q_E$ ?

If the length spectrum of $M$ is simple, then one can most probably use a trace formula and recover from the knowledge of $\Sigma$
all the integrals of $q_E$ along closed geodesics. And this is enough to determine the cohomology class of $q_E$ if $M$ has negative sectional curvature~: the Livshitz theorem \cite{Liv71, GK80} says that if two functions have the same integrals along all closed geodesics, then they are cohomologous. Thus,
the answer to (Q2) is probably {\em ``yes''} if $M$ has negative sectional curvature and
the length spectrum is simple (this last assumption is satisfied generically, but unfortunately not for surfaces of constant negative curvature). 

In fact, it also makes sense to ask whether {\em some} knowledge of the imaginary parts alone $\left\{ \Im m(z), z\in\Sigma, E-c\hbar\leq \Re e(z)\leq E +c\hbar\right\}$ allows
to recover {\em some} information about $q_E$. For instance, one can ask the apparently simple question~:\\
{\bf(Q3) (Very weak inverse problem)} Let $C$ denote a constant function.
As follows from \eqref{e:infsup}, we have the implication
$$q_E\sim_{p_o} C \mbox{ on }p_o^{-1}\left\{E\right\}\Longrightarrow \frac{\Im m(z)}\hbar\Lim_{\hbar\To 0, z\in\Sigma, E-c\hbar\leq \Re e(z)+c\hbar} C.$$Does the converse hold ?

\vspace{.8cm}

{\bf  Main assumption on the manifold $M$~:} From now on, we assume that $M$ has constant sectional curvature $-1$. This implies the ergodicity of the geodesic
 flow on any energy layer (with respect to the Liouville measure), and in fact a very chaotic behaviour of trajectories (see Section \ref{s:chaos}). We will indicate how to generalize our results in the case of {\em surfaces} of variable negative curvature; however, it is not clear what to do in higher dimension and variable negative curvature.

\vspace{.8cm}

In the following theorem, $h_{KS}$ stands for the Kolmogorov--Sinai entropy, or metric entropy.
It is an affine functional, taking nonnegative values, defined on $\cM$, the set of $G$--invariant probability measures on $T^*M$~:
see for instance \cite{KH} for the definition of $h_{KS}$. We will also denote $\cM_E\subset\cM$ the set of invariant probability measures carried by $p_o^{-1}\left\{E\right\}$. Since $p_o$ is homogeneous it is enough to consider, for instance,
the case $E=\frac12$, and $p_o^{-1}\left\{E\right\}=S^*M$. For $\mu\in\cM_{\frac12}$, we have $h_{KS}(\mu)\leq d-1$, with equality if and only if $\mu$ is the Liouville measure. We now fix $E=\frac12$ and we denote $q=q_{\frac12}$,
$\bar q=\bar q_{\frac12}$, $q^+=q^+_{\frac12}$, $q^-=q^-_{\frac12}$.

We fix some $c>0$ and denote 
$$\Sigma_{\frac12}=\left\{ z\in\Sigma, \frac12-c\hbar\leq \Re e(z)\leq \frac12 +c\hbar\right\}.$$

\begin{thm} \label{t:upperWeyl} Assume $M$ has constant sectional curvature $-1$. Define
$$H(\alpha)=\sup\left\{ h_{KS}(\mu), \mu\in \cM_{\frac12}, \int q\,d\mu=\alpha\right\}.$$

If $\alpha\geq \bar q$, then for any $c>0$
$$\limsup_{\hbar\To 0} \frac{\log\sharp\left\{ z \in \Sigma_{\frac12}, \frac{\Im m(z)}\hbar\geq \alpha\right\} }{\abs{\log\hbar}} \leq H(\alpha).$$
If $\alpha\leq \bar q$, then for any $c>0$
$$\limsup_{\hbar\To 0} \frac{\log\sharp\left\{ z \in \Sigma_{\frac12},  \frac{\Im m(z)}\hbar\leq \alpha\right\} }{\abs{\log\hbar}} \leq H(\alpha).$$
\end{thm}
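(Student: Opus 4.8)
The plan is to prove the upper bound via a semiclassical concentration argument in the spirit of the quantum ergodicity / entropic uncertainty literature. First I would fix $\alpha \geq \bar q$ (the other case is symmetric by replacing $q$ with $-q$, or by the spectral symmetry $z \mapsto -\bar z$ combined with sign reversal of $Q$). Enumerate the eigenvalues $z \in \Sigma_{\frac12}$ with $\Im m(z)/\hbar \geq \alpha$, and pick for each one a normalized eigenfunction $u_\hbar$ solving $(\cP(z)-z)u_\hbar = 0$. Taking the imaginary part of $\langle (\cP(z)-z)u_\hbar, u_\hbar\rangle = 0$ gives $\hbar \, \Re e\langle Q(z)u_\hbar, u_\hbar\rangle = \Im m(z)\,\|u_\hbar\|^2$, so that $\langle \Op_\hbar(q) u_\hbar, u_\hbar\rangle \geq \alpha + o(1)$; these are the ``large damping'' quasimodes whose number we must bound. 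A standard elliptic estimate also localizes $u_\hbar$ microlocally on the energy shell $p_o^{-1}\{E\}$, up to $O(\hbar^\infty)$ errors.

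Next I would run the dynamical/combinatorial heart of the argument. Propagating $u_\hbar$ by the Schr\"odinger group $e^{it P/\hbar}$ up to an Ehrenfest-type time $T \sim K|\log\hbar|$, and using that $\cP$ is a small non-selfadjoint perturbation, one finds that $u_\hbar$ is an approximate eigenfunction whose time-averaged $q$-expectation along the flow is still $\gtrsim \alpha$. The key point: build a microlocal partition of $S^*M$ into small cells, form the family of ``words'' $\Sigma$-cylinders of length $\sim |\log\hbar|/(\text{expansion rate})$, and show that, because $\int q\,d\mu = \alpha$ forces the associated invariant measures to avoid the Liouville-typical region, the eigenfunctions must concentrate their mass on a relatively small collection of cylinders — of cardinality $\exp(T(H(\alpha)+\eps))$. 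One then controls the number of linearly independent such quasimodes: a dimension/covering argument (an almost-orthogonality or trace-of-a-projector estimate, as in Sj\"ostrand's and the fractal-Weyl-law proofs) shows $\sharp\{\cdots\} \leq C\hbar^{-d+1} \cdot e^{-T(d-1)} \cdot e^{T(H(\alpha)+\eps)}$, and choosing $T = |\log\hbar|$ collapses this to $\hbar^{-H(\alpha)-\eps}$, hence the claimed $\limsup \leq H(\alpha)$ after letting $\eps \to 0$.

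The main obstacle I expect is making rigorous the step that links ``$\langle \Op_\hbar(q)u_\hbar,u_\hbar\rangle \geq \alpha$'' to ``$u_\hbar$ is supported, up to small error, on $\sim e^{TH(\alpha)}$ cylinders.'' This requires a quantitative large-deviations estimate for Birkhoff sums of $q$ along the geodesic flow — essentially a subadditive pressure/entropy bound — combined with a sharp control of how the quantum propagator spreads mass among cells over the logarithmic time scale, uniformly in $\hbar$. In constant curvature $-1$ the hyperbolicity is uniform, which is exactly what lets one take $T$ proportional to $|\log\hbar|$ with explicit constants and identify the exponent $H(\alpha)$ via the variational principle $\sup\{h_{KS}(\mu) : \int q\,d\mu = \alpha\}$; the concavity of $H$ and the fact that $H(\bar q) = d-1$ guarantee the bound is nontrivial (strictly less than $d-1$) exactly when $\alpha$ is a genuine deviation. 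A secondary technical point is handling the $z$-dependence of $Q(z)$ and the non-normality of $\cP$ when passing from eigenvalues to counting dimensions; here I would invoke the resolvent estimates and the a priori bound $\Im m(z) = O(\hbar)$ already established in the excerpt, so that all operators in play differ from self-adjoint ones by $O(\hbar)$ and the semiclassical calculus applies.
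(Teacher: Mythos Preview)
Your outline diverges from the paper's proof and has a genuine gap at the counting step.

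The paper also goes to Ehrenfest time $T=(1-4\eps)|\log\hbar|$, but by \emph{conjugating} $\cP$ by an invertible $A_T\in\Psi DO^0_\delta$ so that the imaginary part acquires principal symbol $\langle q\rangle_T$ instead of $q$ (Proposition~\ref{p:ave}). From there the argument is purely operator-theoretic: one builds a perturbation $\tilde\cP_T$ by truncating the symbol $\langle q\rangle_T$ at level $\alpha-2\theta$, so that $\tilde\cP_T-\cP_T$ is supported, microlocally, on the set $\{\langle q\rangle_T\geq\alpha-3\theta\}\cap p_o^{-1}(\frac12)$. Kifer's large-deviation bound gives this set Liouville measure $\leq e^{T(H(\alpha-3\theta)-(d-1)+\eps)}$, hence $\|\tilde\cP_T-\cP_T\|_1\leq C\hbar^{2-d}\hbar^{[(d-1)-H(\alpha-3\theta)-\eps](1-4\eps)}$, while $(\tilde\cP_T-z)^{-1}$ exists with norm $O(\hbar^{-1})$ throughout the strip $\{\Im m(z)/\hbar\geq\alpha-\theta\}$. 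Writing $\cP_T-z=(\tilde\cP_T-z)(1+K(z))$ with $K$ trace class, the eigenvalues are zeros of the holomorphic function $g(z)=\det(1+K(z))$, and Jensen's inequality for zeros of holomorphic functions bounds their number by $C(\log\|g\|_\infty-\log|g(z_0)|)\leq C\hbar^{1-d}\hbar^{(d-1)-H(\alpha)-\eps'}$.

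The gap in your route is the passage from ``each eigenfunction concentrates on $\sim e^{TH(\alpha)}$ cylinders'' to ``there are at most $\sim\hbar^{-H(\alpha)}$ eigenvalues''. For a \emph{non-normal} operator the eigenfunctions are neither orthogonal nor almost-orthogonal, and being an $O(\hbar)$ perturbation of a self-adjoint operator does not help: the eigenbasis can be arbitrarily ill-conditioned. Concretely, if $\Pi$ is a projector of rank $R$ and $u_1,\dots,u_N$ are linearly independent with $\|\Pi u_j\|\geq c\|u_j\|$, one cannot conclude $N\leq CR$ (take $u_j=e+\eps v_j$ with $\Pi e=e$, $\Pi v_j=0$, $v_j$ orthonormal; then $N$ is arbitrary while $R=1$). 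The ``almost-orthogonality'' and ``trace-of-a-projector'' estimates you invoke are self-adjoint tools; the Sj\"ostrand and fractal-Weyl-law proofs you cite do \emph{not} count eigenvalues that way --- they use exactly the determinant/Jensen mechanism above. That mechanism is what converts a large-deviation \emph{volume} bound into an eigenvalue \emph{count} without ever looking at individual eigenfunctions, and it is the missing piece in your plan. Your ``secondary technical point'' about non-normality is in fact the central obstruction.
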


\begin{rem} (see \cite{Lal89-II}, \S 4, or \cite{BabLed98}, \S 3 for the argument) The function $H$ is concave and is identically $ -\infty$ outside $[q^-, q^+]$. 
It is continuous and strictly concave in $[q^-, q^+]$, and real analytic in $]q^-, q^+[$
(note that $q^-=q^+$ if and only if $q$ is cohomologous to a constant on $S^*M$).
The function $H$ reaches its maximum $d-1$ at the point $\bar q$, and has finite  (nonnegative) limits at the endpoints $q^-, q^+$.  In particular $H$ is positive in the open interval $]q^-, q^+[$.
\end{rem}

\begin{rem} The key fact in the proof of Theorem \ref{t:upperWeyl}
is the large deviation estimate from \cite{Kif90},
$$\lim_{T\To +\infty}\frac{\log \, L_{\frac12}\left\{\rho\in S^* M, \la q\ra_T(\rho)\in I\right\}}{ T}=\sup\left\{H(\alpha), \alpha\in I\right\}-d-1$$
for any interval $I\subset \IR$ -- where $L_{\frac12}$ is the Liouville measure on $p_o^{-1}\left\{\frac12\right\}=S^*M$. This result gives the volume of the set of trajectories deviating from the Birkhoff ergodic theorem.
See Section \ref{s:chaos}.
\end{rem}

On a surface of variable negative curvature, we can generalize the result to~:
\begin{thm}  \label{t:vari} Assume $M$ is a surface of variable negative curvature. Denote $\varphi$ the infinitesimal
unstable Jacobian (see Section \ref{s:chaos}). Define
$$H(\alpha)=\sup\left\{ \frac{h_{KS}(\mu)}{\int\varphi\,\,d\mu}, \mu\in \cM_{\frac12}, \int q\,d\mu=\alpha\right\}.$$
Then the same conclusion as in Theorem \ref{t:upperWeyl} holds.
 \end{thm}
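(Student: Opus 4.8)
The plan is to run the proof of Theorem~\ref{t:upperWeyl} with one structural modification: the uniform Ehrenfest time $T\asymp|\log\hbar|$ used there is replaced by a trajectory-dependent stopping time governed by the unstable Jacobian $\varphi$. Recall the mechanism in the constant-curvature case. If $(\cP(z)-z)u=0$ with $z\in\Sigma_{1/2}$ and $\Im m(z)/\hbar\ge\alpha$, then evolving $u$ for a time $T$ under the (non-normal) propagator attached to $\cP(z)$ multiplies it by a scalar of modulus $e^{T\,\Im m(z)/\hbar}\ge e^{T\alpha}$, while the norm of that propagator at a phase-space point $\rho$ behaves like $\exp\!\big(T\langle q\rangle_T(\rho)\big)$; comparing the two forces $u$ to be microlocalised, up to a small error, on the large-deviation set $\mathcal W_T=\{\rho\in S^*M:\langle q\rangle_T(\rho)\gtrsim\alpha\}$. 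One then bounds the number of orthonormal such $u$ by a covering number of $\mathcal W_T$ at the scale dictated by the uncertainty principle: with $T=K|\log\hbar|$, Kifer's large-deviation estimate (recalled in the remark after Theorem~\ref{t:upperWeyl}) controls the volume of $\mathcal W_T$, and the concavity and monotonicity properties of $H$ (recalled in the other remark) collapse the resulting supremum $\sup_{\beta\ge\alpha}H(\beta)$ to $H(\alpha)$.

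In variable curvature a single exponential rate no longer controls how an $\hbar$-scaled wave packet transported by the flow fills phase space: a tube of transverse size $\sqrt\hbar$ spreads in the unstable direction by the factor $\exp\!\big(\int_0^t\varphi(G^s\rho)\,ds\big)$, so it recovers unit size only at the \emph{local} time $t_\hbar(\rho)$ defined by $\int_0^{t_\hbar(\rho)}\varphi(G^s\rho)\,ds\approx\tfrac12|\log\hbar|$. The new object to construct is accordingly a \emph{stopping-time partition} $\mathcal P_\hbar$ of $S^*M$: start from a fixed finite partition adapted to the stable and unstable foliations, refine it under $G^t$ forward and backward, and stop the refinement along each trajectory at its own local time $t_\hbar(\rho)$, so that every atom of $\mathcal P_\hbar$ has diameter $\asymp\sqrt\hbar$. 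The semiclassical ingredients -- the long-time Egorov and hyperbolic-dispersion estimates and the ``$o(\hbar^{-\varepsilon})$ quasimodes per $\hbar$-box'' bound -- are statements at scale $\hbar$ and should carry over from the constant-curvature proof essentially verbatim; this is the step that uses that $M$ is a surface, so that the one-dimensional stable and unstable foliations are regular enough to build $\mathcal P_\hbar$ and to propagate wave packets up to the times $t_\hbar(\rho)$.

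It then remains to count the atoms of $\mathcal P_\hbar$ that meet $\mathcal W$. For a bundle of trajectories shadowed by an invariant measure $\mu\in\cM_{1/2}$, the refinement is carried to local time $t_\hbar$ forward and backward -- a window of total length $t_\mu\approx|\log\hbar|/\!\int\varphi\,d\mu$, each half shrinking one direction down to $\sqrt\hbar$ -- and the number of atoms of $\mathcal P_\hbar$ met along it is, up to subexponential factors, $\exp\!\big(t_\mu\,h_{KS}(\mu)\big)=\hbar^{-h_{KS}(\mu)/\int\varphi\,d\mu}$: the denominator $\int\varphi\,d\mu$ appears precisely because in variable curvature it is the stopping time, not the atom count, that the geometry alters (when $\varphi\equiv1$ this reproduces the exponent $h_{KS}(\mu)$ of Theorem~\ref{t:upperWeyl}). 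A pressure estimate of Bowen type then turns the supremum over the measures with $\int q\,d\mu$ in the relevant half-line into the exponent $\sup\{h_{KS}(\mu)/\!\int\varphi\,d\mu:\int q\,d\mu\in I\}$, which by the concavity of $H$ equals $H(\alpha)$ in both cases of the statement.

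The main obstacle I anticipate is exactly the non-uniformity of $t_\hbar(\rho)$. Along trajectories whose expansion rate is atypically slow the stopping time can far exceed $|\log\hbar|$, hence leave the range in which the hyperbolic semiclassical approximation stays valid; one would have to control these trajectories, either by a large-deviation bound for the Birkhoff sums of $\varphi$ itself showing they carry very few eigenfunctions, or by first applying the flow only up to a uniform Ehrenfest time and then subdividing the (unequally sized) atoms down to scale $\sqrt\hbar$ by a purely geometric refinement, the cost of this second subdivision being what produces the weighting by $\int\varphi\,d\mu$. One must also make $\mathcal P_\hbar$ genuinely well-defined with controlled overlaps (a quasi-Markov property) and check that the per-atom quasimode bound survives the construction. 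In higher dimension the spread of the distinct Lyapunov exponents and the mere H\"older regularity of the (un)stable foliations obstruct the whole scheme, which is why the statement is confined to surfaces.
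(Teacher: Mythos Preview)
Your proposal diverges from the paper already at the level of the constant-curvature case: the mechanism you ``recall'' --- propagator evolution, microlocalisation on a large-deviation set, then a covering/box-counting argument --- is not how Theorem~\ref{t:upperWeyl} is proved here. The paper uses the Sj\"ostrand scheme: conjugate $\cP$ by an elliptic $\Psi$DO $A_T=\Op_\hbar(e^{g_T})$ so that the principal symbol of the non-self-adjoint part becomes the time average $\langle q\rangle_T$ (Proposition~\ref{p:ave}); build a trace-class perturbation $\tilde\cP_T$ whose resolvent is controlled on $\{\Im m(z)/\hbar\ge\alpha-\theta\}$ and whose trace-norm distance $\|\tilde\cP_T-\cP_T\|_1$ is bounded by the Liouville measure of $\{\langle q\rangle_T\ge\alpha-3\theta\}$ (Proposition~\ref{p:plagiat}, Corollary~\ref{c:maincoro}); then count eigenvalues via Jensen's inequality for $\det(1+K(z))$. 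Kifer's large-deviation estimate enters only to bound that Liouville volume.

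The passage to variable curvature is then a small modification of each step, recorded in Remarks~\ref{r:change} and~\ref{r:change2}. One replaces the uniform average $\langle q\rangle_T$ by the reparametrised average $\langle q\rangle_{\cT(-T/2),\cT(T/2)}$, where $\cT_\rho(t)$ is defined by $\int_0^{\cT_\rho(t)}\phi(G^s\rho)\,ds=(d-1)t$ for a smooth $\phi\ge\varphi$; the function $g_T$ defining $A_T$ is adjusted accordingly, and in dimension $2$ the resulting symbols stay in $S^0_\delta$ with $\delta=(1-\varepsilon)/2$ because the unstable Jacobian alone controls the derivatives of the flow. The large-deviation input is Theorem~\ref{t:LDP2} (Kifer for the time-changed flow $\bar G^t=G^{\cT_\rho(t)}$, plus Abramov's formula), which yields the exponent $(d-1)\sup\{h_{KS}(\mu)/\!\int\phi\,d\mu-\int\varphi\,d\mu/\!\int\phi\,d\mu+1:\int q\,d\mu=\alpha\}$; letting $\phi\downarrow\varphi$ uniformly gives $H(\alpha)$.

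So your stopping-time partition and box-counting programme is not needed, and the obstacles you yourself flag (non-uniform Ehrenfest times, quasi-Markov overlaps, per-atom quasimode bounds) do not arise on the paper's route. Your instinct that the variable-curvature correction is a trajectory-dependent time change is exactly right --- but it is implemented at the level of the \emph{symbol} $g_T$ of the conjugating operator and of the large-deviation functional, not through a phase-space partition; and the counting is by trace-norm and Jensen, not by covers. As it stands your sketch is a plausible alternative strategy (closer in spirit to fractal-Weyl-law arguments for open systems), but it is not a proof: the hyperbolic dispersion estimates up to variable local times and the control of the slow-expansion trajectories that you identify as obstacles are genuine and would require substantial additional work.
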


\begin{rem} For a manifold of variable negative curvature and dimension $d$ we would expect
the same to hold with
$H(\alpha)=(d-1)\sup\left\{ \frac{h_{KS}(\mu)}{\int\varphi\,\,d\mu}, \mu\in \cM_{\frac12}, \int q\,d\mu=\alpha\right\}.$ However, our proof does not work in this case.
 \end{rem}

One may wonder if the $\limsup$ in Theorem \ref{t:upperWeyl} is also a $\liminf$. This question is reminiscent of the fractal Weyl law conjecture asked by Zworski and Sj\"ostrand, but in our situation we can say with certainty that it is not true. Worse, one cannot expect the lower bound to hold for a ``generic'' $q$. In a paper in preparation, Emmanuel Schenck\footnote{Private communication.} is proving that there exists $\delta>0$
such that $\frac{\Im m(z)}\hbar\leq q^+-\delta$ for all $z\in\Sigma_{\frac12}$, provided a certain criterion 
on $q$ is satisfied.  The criterion reads ${\rm Pr}(q-\frac{d-1}2)<q^+$, where the pressure functional ${\rm Pr}$ is defined on the space of continuous functions on $S^*M$ and is the Legendre transform of $-h_{KS}$ (see Section \ref{s:chaos}). The functional ${\rm Pr}$ is lipschitz with respect to the $C^0$ norm, and the condition ${\rm Pr}\left(q-\frac{d-1}2\right)<q^+$ defines a non--empty open 
set in the $C^0$ topology.
For such a $q$ we cannot have
$$\liminf_{\hbar\To 0} \frac{\log\sharp\left\{ z\in \Sigma_{\frac12},  \frac{\Im m(z)}\hbar > q_+-\delta\right\} }{\abs{\log\hbar}} \geq H( q_+-\delta),$$
since $H$ is positive in $]q_-, q_+[$ but the $\liminf $ on the left-hand side is $-\infty$.

In Sections \ref{s:q3} and \ref{s:arithm}, we investigate Question 3 in some special cases.
We work on compact hyperbolic surfaces ($d=2$), and we study operators of the form
$$\lap_\omega f=\lap f-2\la \omega, df\ra+\norm{\omega}_x^2f,$$
called ``twisted laplacians'' -- here $\omega$ is a harmonic real-valued $1$-form on $M$.
Studying the large eigenvalues of $\lap_\omega$ amounts to studying a fixed spectral window for the semiclassical twisted laplacian
$$-\hbar^2\frac{\lap_\omega}2=-\hbar^2\frac{\lap}2+\hbar^2 \la \omega, d.\ra-\hbar^2\frac{\norm{\omega}_x^2}2, \qquad \hbar\To 0.$$
This question falls exactly into the setting of \S \ref{p:semiclass}, with $q(x, \xi)=\la \omega_x,\xi\ra$.
 Since $q(x, -\xi)=-q(x, \xi)$, we have $\bar q=0$. We will denote ${\rm Pr}(\omega)={\rm Pr}(q)$
 the pressure of the function $q$, defined in Section \ref{s:pressureetal}. It will also be interesting to note that $q^+=-q^-$ coincides with the stable norm $\norm{\omega}_s$ defined in Section \ref{p:proof3}, 
that $\norm{\omega}_s+1\geq {\rm Pr}(\omega)\geq \norm{\omega}_s$ and that
${\rm Pr}(t\omega)-|t|\norm{\omega}_s\Lim_{t\To \infty}0$ in the case of surfaces.

\begin{thm}\label{t:q3arithm} Assume $M$ is a compact arithmetic surface coming from a quaternion algebra. Take $\omega\not=0$. Fix $\beta\in(0, 1]$, and $0<\eps<\beta$. Then, for every $\hbar $ small enough, there exists 
$z\in Sp (-\hbar^2\frac{\lap_\omega}2)$ with $|\Re e(z)-\frac12|\leq\hbar^{1-\beta}$, such that
$$ \frac{\Im m(z)}{\hbar} \geq  {\rm Pr}(\omega)-\frac12 -\frac{1+\eps}{2\beta}.$$

Equivalently, given $\beta\in(0, 1]$, and $0<\eps<\beta$, for all $T$ large enough, 
there exists $r_n$ such that $|\Re  e(r_n)-T|\leq T^\beta$ and
$$|\Im m(r_n)|\geq {\rm Pr}(\omega)-\frac12 -\frac{1+\eps}{2\beta},$$
 where $r_n$ is the ``spectral parameter'' defined by $\lambda_n=-(\frac14+r_n^2)$.
\end{thm}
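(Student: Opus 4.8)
The tool is the Selberg trace formula for the twisted Laplacian $\lap_\omega$ (equivalently, after the substitution $z=\hbar^2(\tfrac14+r^2)/2$, $\hbar\approx 1/T$, for $-\hbar^2\lap_\omega/2$). For an even test function $h$, holomorphic and rapidly decreasing in a horizontal strip wide enough to contain all the $r_n$, with $g(u)=\tfrac1{2\pi}\int_\IR h(r)e^{-iru}\,dr$, it reads
\[
\sum_n h(r_n)=\frac{\Vol(M)}{4\pi}\int_\IR h(r)\,r\tanh(\pi r)\,dr+\sum_{\{\gamma\}}\frac{\ell(\gamma_0)}{2\sinh(\ell(\gamma)/2)}\,e^{\int_\gamma\omega}\,g(\ell(\gamma)),
\]
where $\{\gamma\}$ runs over conjugacy classes of hyperbolic elements of $\pi_1(M)$, $\gamma_0$ the underlying primitive class, and where the twist by the real harmonic form $\omega$ enters only through the \emph{positive} scalars $e^{\int_\gamma\omega}=e^{\ell(\gamma)\la q\ra_{\ell(\gamma)}(\gamma)}$. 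The plan is to feed in a test function adapted to a length scale $L\approx 2\beta\log T$ and to a spectral window of width $\approx T^\beta$ around $\Re e(r)=T$, and to play the exponential size of the geometric side off against the polynomial Weyl bound \eqref{e:Weyl2} for the number of $r_n$ in that window.

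First I would choose $g$ supported in a short interval around $L$, of the form (symmetrised) $g(u)=\chi(\tfrac{u-L}{W})\cos(\widetilde T u)$ with $W\approx T^{-\beta}$, $\chi\in C_c^\infty$ nonnegative of the form $\psi*\psi$, and $\widetilde T$ a free parameter in $[T-\tfrac12T^\beta,\,T+\tfrac12T^\beta]$. Then $h$ is entire, $\abs{h(\sigma+i\rho)}\lesssim W\,e^{\abs\rho L}$, $\abs h$ is $\approx W$ and super-polynomially small away from $\{\sigma$ within $T^\beta$ of $\pm\widetilde T\}$, and the identity term is $O(1)$ (because $\int h$ and $\int r h$ are super-polynomially small, $g$ being oscillatory and supported far from $0$). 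The arithmetic hypothesis enters at this point: since $M$ comes from a quaternion algebra, the lengths of closed geodesics form a discrete set of the shape $2\operatorname{arccosh}(n/2)$, whose gaps near $L$ are $\gtrsim e^{-L/2}\approx T^{-\beta}\approx\abs{\supp\chi(\tfrac{\cdot-L}{W})}$; hence, after moving $L$ by $O(1)$, the support of $g$ meets the length spectrum in a \emph{single} value $\ell^*$, carried by a large family of geodesics, so that the oscillatory factor $\cos(\widetilde T\ell)$ on the geometric side collapses to the constant $\cos(\widetilde T\ell^*)$ — which we can make $\ge\tfrac12$ by choosing $\widetilde T$ (its admissible range is far longer than a period of $u\mapsto\cos(u\ell^*)$). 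Up to negligible contributions of proper powers, the geometric side is then
\[
\frac{\ell^*\cos(\widetilde T\ell^*)}{2\sinh(\ell^*/2)}\,\chi\!\Big(\tfrac{\ell^*-L}{W}\Big)\sum_{\ell(\gamma)=\ell^*}e^{\int_\gamma\omega}\ \gtrsim\ e^{-\ell^*/2}\sum_{\ell(\gamma)=\ell^*}e^{\int_\gamma\omega}.
\]

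The heart of the matter is a lower bound $\sum_{\ell(\gamma)=\ell^*}e^{\int_\gamma\omega}\ge e^{\ell^*({\rm Pr}(\omega)-o(1))}$ along a (positive–density) sequence of lengths $\ell^*\to\infty$: the weight $e^{\int_\gamma\omega}$, summed over the geodesics of a single arithmetic length, should realise the full topological pressure ${\rm Pr}(\omega)={\rm Pr}(q)$ of the geodesic flow (Section \ref{s:chaos}). This couples the thermodynamic formalism with the arithmetic structure of $M$: closed geodesics of a fixed large length — equivalently, of a fixed trace/discriminant — are numerous, their multiplicity being bounded below by unit/class–number counts in the quaternion order (Siegel's theorem, or an effective substitute), and they equidistribute in $S^*M$ finely enough (Duke-type effective equidistribution, via subconvexity) that the heavily weighted ones, with $\la q\ra_{\ell^*}(\gamma)$ near the pressure–optimal value, occur in the expected exponential abundance. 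Granting this, $\abs{\text{geom.\ side}}\ge e^{\ell^*({\rm Pr}(\omega)-\frac12-o(1))}$. I expect this step — reconciling ``a single arithmetic length'' with ``the full thermodynamic weight'' — to be the main obstacle; everything else is routine harmonic analysis on the trace formula.

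It then remains to argue by contradiction. If every $r_n$ with $\big|\abs{\Re e(r_n)}-\widetilde T\big|\lesssim T^\beta$ had $\abs{\Im m(r_n)}\le\alpha:={\rm Pr}(\omega)-\tfrac12-\tfrac{1+\eps}{2\beta}$, then, using $\abs{h(\sigma+i\rho)}\lesssim We^{\abs\rho L}$, the a priori bound \eqref{e:infsup} to control $\abs{\Im m(r_n)}$ uniformly over all $n$, the super-polynomial decay of $h$ to discard the $r_n$ outside the window, and \eqref{e:Weyl2} to count the $\lesssim T^{1+\beta}$ remaining ones, we would get $\big|\sum_n h(r_n)\big|\lesssim T^{1+\beta}\cdot We^{\alpha L}+O(1)\lesssim Te^{\alpha L}$. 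Since the left-hand side equals the geometric side up to the $O(1)$ identity term, comparing with the previous display and using $\ell^*\approx L\approx 2\beta\log T$ gives $2\beta\log T\,\big({\rm Pr}(\omega)-\tfrac12-\alpha-o(1)\big)\le\log T+O(\log\log T)$, i.e.\ $(1+\eps-o(1))\log T\le\log T+O(\log\log T)$ — false for $T$ large. Hence some $r_n$ with $\abs{\Re e(r_n)-T}\le T^\beta$ satisfies $\Im m(r_n)>\alpha$; translating back through $z=\hbar^2(\tfrac14+r_n^2)/2$, $\hbar=1/\widetilde T$, yields the statement. (The bound is vacuous unless ${\rm Pr}(\omega)>\tfrac12+\tfrac{1+\eps}{2\beta}$, i.e.\ for a sufficiently strong damping — in the spirit of the pressure criterion recalled just before the theorem — which is why ``strong damping'' appears; and since $\tfrac12\le\tfrac{1+\eps}{2\beta}$ for $\beta\le1$, it is automatically consistent with the a priori upper bound $\Im m(r_n)\le q^++o(1)$ coming from \eqref{e:infsup}.)
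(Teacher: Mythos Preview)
Your overall framework --- trace formula, a test function $g$ supported near length $L\approx 2\beta\log T$, comparison of the geometric side against a Weyl-bounded spectral side --- is exactly right, and so is your use of the arithmetic gap $\gtrsim e^{-L/2}$ in the length spectrum. The gap is in the step you yourself flag as ``the main obstacle'': the lower bound $\sum_{\ell(\gamma)=\ell^*}e^{\int_\gamma\omega}\ge e^{\ell^*({\rm Pr}(\omega)-o(1))}$ for a \emph{single} arithmetic length. This claim is not just unproven; it is almost certainly false. The total weight over lengths in $[\ell^*-1,\ell^*]$ is $\asymp e^{\ell^*{\rm Pr}(\omega)}$ and there are $\asymp e^{\ell^*/2}$ distinct lengths there, so pigeonhole (or the equidistribution you invoke, which spreads the weight rather than concentrates it) gives at best $\sum_{\ell(\gamma)=\ell^*}e^{\int_\gamma\omega}\gtrsim e^{\ell^*({\rm Pr}(\omega)-1/2)}$. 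Duke--type equidistribution according to Liouville measure supports precisely this weaker exponent, not the full pressure. Plugging the achievable bound into your contradiction argument yields only $\sup_j|\Im m(r_j)|\ge {\rm Pr}(\omega)-1-\tfrac{1}{2\beta}$, which is weaker than the theorem by $\tfrac12$.

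The paper's proof recovers this missing $\tfrac12$ by \emph{not} shrinking the support of $g$ down to a single length. It keeps $g$ supported in a fixed interval $[\alpha-1,\alpha+1]$ (so $\asymp e^{\alpha/2}$ arithmetic lengths contribute), writes the geometric side as $S_\alpha(t)=\sum_m \eta(m)\cos(t\log x_m)$, and controls the resulting oscillation by an $L^2$ average in $t$ over $[2T-T^\beta,2T+T^\beta]$ (this is Hejhal's trick). The well-spacing of the $\log x_m$ makes the off-diagonal negligible, so $\int|S_\alpha|^2\,dt\gtrsim T^\beta\sum_m\eta(m)^2$; then Cauchy--Schwarz gives $\sum_m\mu(m)^2\ge(\sum_m\mu(m))^2/(\sum_m 1)\ge e^{2\alpha{\rm Pr}(\omega)}/e^{\alpha/2}$, using only the standard pressure asymptotic $\sum_{\ell(\gamma)\in[\alpha-1/2,\alpha+1/2]}e^{\int_\gamma\omega}\gtrsim e^{\alpha{\rm Pr}(\omega)}$. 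The point is that the $L^2$ averaging converts the linear sum $\sum\mu(m)$ (which you know) into the quadratic sum $\sum\mu(m)^2$, and Cauchy--Schwarz against the \emph{small} number $e^{\alpha/2}$ of arithmetic lengths is exactly where the extra $\tfrac12$ comes from. This is the idea your approach is missing; once you have it, no subconvexity or Duke-type input is needed.
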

Of course, we deduce immediately the following corollary~:
\begin{coro}
$\sharp\{n, |\Re  e(r_n)|\leq T, |\Im m(r_n)|\geq {\rm Pr}(\omega)-\frac12 -\frac{1+\eps}{2\beta}\}\geq T^{1-\beta},$
asymptotically as $T\To +\infty.$ 
\end{coro}
Since ${\rm Pr}(\omega)\To 1$ as $\omega\To 0$ (but ${\rm Pr}(\omega)\To +\infty$ as $\norm{\omega}_s\To +\infty$)
this result is only interesting if $\norm{\omega}_s$ is large enough (depending on $\beta$).
Note also that if $\norm{\omega}_s$ is large enough we have ${\rm Pr}(\omega)< \norm{\omega}_s+\frac12$,
so that the work of Emmanuel Schenck mentioned above will show that there is a strip $\{\Im m(z)\geq \norm{\omega}_s-\delta\}$ ($\delta>0$) that contains no $r_n$.

The arithmetic case is special, in that the lengths of closed geodesics are well separated~: we can write a trace formula, find a lower bound on the geometric part (despite of its oscillatory nature) and deduce a lower bound on the imaginary parts of eigenvalues. The ideas are borrowed from \cite{Hej}. 

\begin{rem} Another way of defining the twisted laplacian is to write $M=\Gamma\backslash \IH$, where
$\IH$ is the universal cover of $M$ and $\Gamma$ is a discrete group of isometries of $\IH$; to fix an origin $o\in \IH$, and to write
$$\lap_\omega f(x) =e^{\int_o^x\omega}\circ \lap\left( e^{-\int_o^x\omega}f(x)\right);$$
this operator preserves $\Gamma$-periodic functions, hence descends to $M$. The case where $\omega$ takes purely imaginary values is self-adjoint, and analogous to the study of ``Bloch waves''.
The case where $\omega$ takes real values is no longer self-adjoint.
\end{rem}

\begin{rem} Our motivation for working with twisted laplacians on hyperbolic manifolds was
that it is a case where the trace formula is exact. There was, {\em a priori}, hope to prove finer results than in cases where the trace formula is not exact (in the latter case the space of test functions to which the formula can be applied is more limited). {\em A posteriori}, we never use any wild test function that wouldn't be allowed in the general case. So, one can think that the same result holds
for our general operator \eqref{e:genop} -- provided one proves a semiclassical trace formula first.
\end{rem}
If we don't assume arithmeticity, we can only treat the following operator~:
$$-\hbar^2\frac{\lap_{\theta(\hbar)\omega}}2=-\hbar^2\frac{\lap}2+\hbar^2\theta(\hbar) \la \omega, d.\ra-\hbar^2\theta(\hbar)^2\frac{\norm{\omega}_x^2}2, \qquad \hbar\To 0,$$
where $\theta(\hbar)\geq |\log\hbar|$ and $\hbar\theta(\hbar)\To 0$. In other words the non-selfadjoint
perturbation is stronger than in the previous cases.
\begin{thm}\label{t:q3} Assume $M$ is a compact hyperbolic surface. Take $\omega\not=0$. 
Take any function $f(\hbar)\gg \theta(\hbar)^{3/2}\log\log\hbar^{1/2}.$ Then there is a sequence
$\hbar_n\To 0$ such that
$$ \sup\left\{\frac{\Im m(z)}{\hbar_n\theta(\hbar_n)}, z\in Sp\left(-\hbar_n^2\frac{\lap_{\theta(\hbar_n)\omega}}2\right) ,
|\Re e z-\frac12|\leq \hbar_n f(\hbar_n)\right\} \Lim_{n\To +\infty}\norm{\omega}_s,$$
the stable norm of $\omega$.
\end{thm}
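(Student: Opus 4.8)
The plan is to prove the upper and lower bounds separately; only the lower bound is substantial. For the upper bound, $\sup\{\Im m(z)/(\hbar\,\theta(\hbar))\}\le\|\omega\|_s+o(1)$ is the semiclassical a priori estimate underlying \eqref{e:infsup}, applied to $-\hbar^2\lap_{\theta\omega}/2$: this is an operator of the type of \S\ref{p:semiclass} whose ``damping'' has semiclassical symbol $\theta\,q$ with $q(x,\xi)=\langle\omega_x,\xi\rangle$, so that $q^+=\|\omega\|_s$, and the hypotheses $\hbar\theta\to0$ and $\theta\gtrsim|\log\hbar|$ are what let the Sj\"ostrand-type argument close with a controlled remainder. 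So the task is: for a sequence $\hbar_n\to0$, produce an eigenvalue $z$ of $-\hbar_n^2\lap_{\theta_n\omega}/2$ with $|\Re e(z)-\tfrac12|\le\hbar_n f(\hbar_n)$ and $\Im m(z)/(\hbar_n\theta_n)\ge\|\omega\|_s-o(1)$. Writing the eigenvalues of $\lap_{\theta\omega}$ as $\lambda=-(\tfrac14+r^2)$, the eigenvalues $z$ with $\Re e(z)$ near $\tfrac12$ correspond to $r$ with $\Re e(r)$ near $T:=1/\hbar$, and then $\Im m(z)/(\hbar\theta)=\Im m(r)/\theta+o(1)$; moreover, since $\omega$ is real, $\overline{\lap_{\theta\omega}f}=\lap_{\theta\omega}\bar f$, so the spectrum is invariant under $r\mapsto\bar r$ (and of course under $r\mapsto-r$). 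Hence it suffices to find $r$ with $|\Re e(r)-T|\le f(\hbar)$ and $|\Im m(r)|\ge(\|\omega\|_s-o(1))\theta$.

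For this I would use the Selberg trace formula, which is \emph{exact} on $M=\Gamma\backslash\IH$, for the twisted Laplacian $\lap_{\theta\omega}$: the hyperbolic conjugacy classes contribute $\sum_{\gamma\ \mathrm{prim.}}\sum_{k\ge1}\frac{\ell(\gamma)\,\chi_\theta(\gamma)^k}{2\sinh(k\ell(\gamma)/2)}\,\widehat h(k\ell(\gamma))$, with the real positive character $\chi_\theta(\gamma)=\exp(\theta\int_\gamma\omega)$. I would choose even test functions with $\widehat h(t)=w(|t|)\cos(Tt)$, where $w\in C_c^\infty$ equals $1$ on $[\ell_0-\eta/2,\ell_0+\eta/2]$ and is supported in $[\ell_0-\eta,\ell_0+\eta]$, with parameters $\ell_0\asymp\theta$, $\eta$, and a smoothness order $N\asymp\theta$ to be optimized. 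Then $h(r)=\tfrac12[\tilde w(r-T)+\tilde w(r+T)]$ with $\tilde w$ the (even) cosine transform of $w$, so that $h$ is localized within $\asymp f(\hbar)$ of $\Re e(r)=\pm T$ and $|h(x+iy)|\lesssim_N\eta\,e^{|y|(\ell_0+\eta)}\big[(1+\eta|x-T|)^{-N}+(1+\eta|x+T|)^{-N}\big]$; the growth factor $e^{|y|\ell_0}$ off the real axis comes precisely from $\widehat h$ being supported near $\ell_0$.

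The crux is a lower bound for the geometric side $G(T)$. Its coefficients are all positive, so $G(T)=\sum_{\gamma,k}(\mathrm{positive})\cdot\cos(Tk\ell(\gamma))$, and I would bound $\sup_{T\in[T_0,2T_0]}|G(T)|$ from below by a second moment: $\int\psi(T/T_0)|G(T)|^2\,dT\gtrsim T_0\sum_{k\ell(\gamma)\in\mathrm{supp}\,w}\big(\tfrac{\ell(\gamma)\chi_\theta(\gamma)^k}{2\sinh(k\ell(\gamma)/2)}\big)^2$ (the diagonal term), provided $T_0$ exceeds the number (which is $\lesssim e^{\ell_0}$) of pairs $(\gamma,k)$ in $\mathrm{supp}\,w$, so that the off-diagonal — estimated through the rapid decay of $\widehat\psi$ together with a bound on near-coincidences in the length spectrum — is negligible. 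By the thermodynamic formalism for the (Anosov) geodesic flow, $\sum_{\ell(\gamma)\le x}e^{2\theta\int_\gamma\omega}=e^{x(\mathrm{Pr}(2\theta\omega)+o(1))}$, and $\mathrm{Pr}(2\theta\omega)=2\theta\|\omega\|_s+o(1)$ by the quoted behaviour of the pressure on surfaces, so the diagonal is $\gtrsim e^{-\ell_0}e^{\ell_0(2\theta\|\omega\|_s+o(1))}$ and hence $\sup_{[T_0,2T_0]}|G(T)|\gtrsim e^{\ell_0(\theta\|\omega\|_s-\frac12-o(1))}$. Since $\hbar\asymp1/T$ and $\theta(\hbar)\ge|\log\hbar|$, the requirement $T_0\gtrsim e^{\ell_0}$ is compatible with $\ell_0\asymp\theta$; fixing such a $T=T_n\in[T_0,2T_0]$ and letting $T_0\to\infty$ produces the sequence $\hbar_n=1/T_n\to0$.

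On this sequence the trace formula reads $\sum_n h(r_n)=(\text{identity term})+G(T_n)$, with the left-hand side $\gtrsim e^{\ell_0(\theta\|\omega\|_s-\frac12-o(1))}$. The identity term is $O(T_n)$, and using the decay of $h$ in $\Re e(r)$ away from $\pm T_n$ together with the a priori bound $|\Im m(r_n)|\lesssim_{\|\omega\|_\infty}\theta$ \emph{for every} eigenvalue (pair the equation with $\bar f$ and use $\lap\le0$: this gives $|\Im m(r)|\lesssim|\Re e(r)|+\theta$ and $|\Im m(r)|\lesssim\theta^2/|\Re e(r)|+\theta$, whence the claim), the contribution of the eigenvalues with $|\Re e(r_n)\mp T_n|>f(\hbar_n)$ is negligible; it is exactly here that $N\asymp\theta$ and the required size of $f$ (of order $\theta^{3/2}$ up to the $\log\log$ factor) are forced, by balancing $(\eta f)^{-N}$ and the $N!$-type loss in the decay of $\tilde w$ against $e^{\theta\ell_0}$ and $T_n$. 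Therefore the $\lesssim f(\hbar_n)T_n$ eigenvalues with $|\Re e(r_n)-T_n|\le f(\hbar_n)$ carry the bulk of $\sum_n h(r_n)$, so at least one of them satisfies $e^{|\Im m(r_n)|(\ell_0+\eta)}\gtrsim e^{\ell_0(\theta\|\omega\|_s-\frac12-o(1))}/(f(\hbar_n)T_n)$, i.e.\ $|\Im m(r_n)|\ge\theta\|\omega\|_s-O(1)-O\!\big(\log(f(\hbar_n)T_n)/\ell_0\big)\ge(\|\omega\|_s-o(1))\theta$, since $\log(f(\hbar_n)T_n)\lesssim\theta\asymp\ell_0$. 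By the $r\mapsto\bar r$ symmetry we may take $\Im m(r_n)>0$, and translating back yields an eigenvalue $z$ in the window with $\Im m(z)/(\hbar_n\theta_n)\ge\|\omega\|_s-o(1)$; combined with the upper bound, this is the assertion. I expect the main obstacle to be the geometric-side lower bound on a \emph{general} hyperbolic surface: with no separation of the geodesic lengths one must control the off-diagonal of the second moment through a non-trivial bound on multiplicities and near-coincidences in the length spectrum, and the admissible size of $\ell_0$ — hence both the strength of the conclusion and the need to pass to a subsequence — is dictated by the interaction of that bound with the constraint $\theta(\hbar)\ge|\log\hbar|$, which is precisely what is unavailable in the arithmetic setting of Theorem~\ref{t:q3arithm}.
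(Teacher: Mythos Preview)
Your overall plan---trace formula, lower bound on the geometric side, upper bound on the spectral side---is exactly the paper's. The substantive gap is precisely where you locate it yourself: the lower bound for $\sup_T|G(T)|$ via a second moment.

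On a general compact hyperbolic surface, the second-moment route does not close. The off-diagonal contribution to $\int\psi(T/T_0)|G(T)|^2\,dT$ is controlled by $\sum_{i\ne j}a_ia_j\cdot O\bigl(\min(T_0,(T_0|\ell_i-\ell_j|^2)^{-1})\bigr)$, and to make this sum small you need $T_0\gg 1/\min_{i\ne j}|\ell_i-\ell_j|$, not merely $T_0\gtrsim e^{\ell_0}$ (the \emph{number} of terms). Nothing forbids distinct closed-geodesic lengths in $[\ell_0-\eta,\ell_0+\eta]$ from being exponentially close or even coincident; no usable bound on near-coincidences in the length spectrum is known for non-arithmetic $\Gamma$. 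The positivity trick $\widehat\psi\ge0$ does not rescue this either: once the averaging window is centered at $2T_0\ne0$, the kernel picks up factors $\cos(2T_0(\ell_i-\ell_j))$ of indefinite sign. This is exactly why the paper reserves the second moment for the arithmetic case (Section~\ref{s:arithm}), where the lengths $\log x_m$ satisfy the explicit spacing $|\log x_m-\log x_k|\gtrsim e^{-\alpha/2}|m-k|$.

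The paper's device for Theorem~\ref{t:q3} is different and sidesteps the issue entirely: it is the Dirichlet--box Lemma~\ref{l:JPTtrick} (from \cite{PR94,JP07}). For each length scale $T$ one chooses the spectral parameter $R$ in the huge window $[M,M\exp(\exp 5T)]$ so that $\cos(R\ell_\gamma)\ge\tfrac12$ for \emph{every} closed geodesic with $\ell_\gamma\le5T$. With this $R$, all terms on the geometric side of \eqref{e:tracegauss} are nonnegative, and the lower bound \eqref{e:modulus} follows by keeping a single geodesic near the stable-norm maximizer---no second moment, no information on the length spectrum. The cost is that $R$ is only pinned down in a doubly exponential range, which forces the length scale to be $T\asymp\log\log R$ (not $\asymp\theta$ as you take), forces one to pass to a subsequence $\hbar_n=1/R_n$, and dictates the Gaussian test function with $\sigma^{-2}=C\Theta(R)$; the spectral-window size $f(\hbar)\gg\theta^{3/2}(\log\log\hbar)^{1/2}$ then comes out of the condition $\sigma^2 f^2\gg T\Theta(R)$ in Proposition~\ref{p:f}. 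Note also that your choice $\ell_0\asymp\theta$ together with $T_0\gtrsim e^{\ell_0}$ already forces $\theta\asymp|\log\hbar|$, whereas the theorem allows any $\theta\ge|\log\hbar|$ with $\hbar\theta\to0$; the paper's much smaller length scale $T\asymp\log\log(1/\hbar)$ accommodates the full range.
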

See Section \ref{p:proof3} for the definition of the stable norm. Note that with our previous notations, $\norm{\omega}_s=q^+=-q^-$, and ${\rm Pr}(t\omega)-|t|\norm{\omega}_s\Lim_{t\To \infty}0 $ on a hyperbolic surface.

\vspace{.5cm}
{\bf Acknowledgements~:} This work was partially supported by the grant ANR-05-JCJC-0107-01.
I am grateful to UC Berkeley and the Miller Institute for their hospitaly in the spring of 2009.
I thank Dima Jakobson for suggesting that some of the ideas contained in Hejhal's book \cite{Hej} could be used to find lower bounds on the density of eigenvalues in arithmetic situations. In fact, at the same time as this paper was written, Jakobson and Naud managed
to apply these ideas to study the resonances of certain arithmetic convex cocompact surfaces \cite{JN09}.

\section{Note on the definition of the spectrum and its multiplicity\label{s:spec}.}
By the term ``spectrum'', we mean the set $\Sigma$ of $z\in\Omega$ such that $\cP(z)-z$ is not
bijective $H^2(M)\To H^0(M)$. If it is bijective, then the inverse must be continuous, by the closed graph theorem.

If $z$ is restricted to a compact subset of $\Omega$, it is easy to see that $G(z)=I+\lambda^{-1}(\cP(z)-z)$ is invertible for $\lambda>0$ large enough. The inverse $G(z)^{-1}$ is then a compact operator on $H^0(M)$. Besides, one sees that $\cP(z)-z$ is not
bijective $H^2(M)\To H^0(M)$ if and only if $1$ is in the spectrum of $G(z)^{-1}$, if and only if there
exists $u\in H^2(M)$ such that $(\cP(z)-z)u=0$. This shows, in particular, that the ``spectrum'' is discrete
and corresponds to the existence of ``eigenfunctions''.

To define the multiplicity of $z_0\in\Sigma$, we proceed the same way as in \cite{Sj00}. By the density
of finite rank operators in the space of compact operators \cite{RS}, one shows that in a neighbourhood of $z_0$ there exists
a finite rank operator $K(z)$, depending holomorphically on $z$, such that $\cP(z_0)-z_0+K(z_0)$
is invertible. The multiplicity of $z_0$ is then defined as the order of $z_0$ as a zero of the 
holomorphic function
$$z\mapsto \det[(\cP(z)-z+K(z))^{-1}(\cP(z)-z)]=\det[I- (\cP(z)-z+K(z))^{-1}K(z)].$$
It is shown in \cite{Sj00} that this definition does not depend on the choice of $K(z)$, and coincides
with other usual definitions of the multiplicity. Besides, the argument can be extended to the case where $K(z)$ is trace class.

\section{A few facts on the geodesic flow on a negatively curved manifold \label{s:chaos}}
\subsection{Anosov property}
If $M$ has negative sectional curvature, then the geodesic flow on $S^*M$ has the Anosov property \cite{An67}.
This means there are $C, \lambda >0$ such that for each 
$\rho\in S^*M$, the tangent space $T_\rho(S^*M)$ splits into $$
T_\rho(S^*M)=E^u(\rho)\oplus E^s(\rho) \oplus \IR\,X(\rho)\,
$$
where\\
-- the vector field $X$ generates the geodesic flow $G^t$;\\
-- $E^s$ is the stable subspace~: for all $v\in E^s(\rho)$, and for $t\geq 0$, $\norm{DG^t_\rho.v}\leq Ce^{-\lambda t}\norm{v}$;\\
-- $E^u$ is the unstable subspace~:  for all $v\in E^u(\rho)$, and for $t\leq 0$, $\norm{DG^t_\rho.v}\leq Ce^{\lambda t}\norm{v}$.

If $M$ has constant negative curvature $-1$, any $\lambda<1$ will do. We take $\lambda=1-\eps$,
with $\eps$ arbitrarily small.
One also has an upper bound $\norm{DG^t_\rho.v}\leq Ce^{(1+\eps) |t|}\norm{v}$ for any $\eps>0$ and any $t\in \IR$.

\subsection{Pressure, entropy, and large deviation.\label{s:pressureetal}}
The pressure is defined on $C^0(S^* M)$, as the Legendre transform of the entropy~:
$${\rm Pr}(f)=\sup\left\{h_{KS}(\mu)+\int f\,d\mu, \mu\in \cM_{\frac12}\right\}.$$
If $f$ is H\"older, then the supremum is attained for a unique $\mu$, called the equilibrium measure
of $f$. The functional ${\rm Pr}$ is analytic on any Banach space of sufficiently regular functions --
for instance, a space of H\"older functions \cite{BR75, Ru}. Besides, the restriction of ${\rm Pr}$ to any line $\left\{f+tg, t\in\IR\right\}$ is strictly convex, unless $g$ is cohomologous to a constant \cite{Ra73}. If $g$ is sufficiently smooth, we recall that this means that $g=\left\{p_o, h\right\}+c$ for some smooth function $h$ and a constant $c$. If $g$ if H\"older, it is better to use the integral version of the notion. If $\gamma(t)$ is a periodic trajectory of the geodesic flow on $S^*M$ (equivalently, a closed geodesic), 
we denote $l_\gamma$ its period (equivalently, the length of the closed geodesic). We denote $\,d\gamma$ the measure $\int g\,d\gamma=\int_0^{l_\gamma}g(\gamma(t))dt$ on $S^*M$, and $\,\,d\mu_\gamma$ the probability measure $\int g\,\,d\mu_\gamma=l_\gamma^{-1}\int g\,d\gamma$. One says that $g$ is cohomologous to the constant function $c$ if $\int g\,d\gamma=c\,l_\gamma$ for all periodic trajectories of the geodesic flow (the Livschitz theorem says that both notions are equivalent for smooth functions).

Let us now fix a smooth function $q$ on $S^*M$, not cohomologous to a constant. For $\alpha\in\IR$, define
$$H(\alpha)=\sup\left\{h_{KS}(\mu), \mu\in\cM_{\frac12}, \int q\,d\mu=\alpha\right\},$$
$$P(\beta)={\rm Pr}(\beta q)=\sup\left\{h_{KS}(\mu)+\beta\int q\,d\mu, \mu\in \cM_{\frac12}\right\}=\sup_\alpha \alpha\beta+H(\alpha) .$$
The function $H$ is concave, continuous on the interval $[q_-, q_+]$ defined earlier~:
$$q^-=\lim_{T\To+\infty}\inf_{p_o^{-1}\left\{\frac12\right\}}\langle q\rangle_T,$$
$$q^+=\lim_{T\To+\infty}\sup_{p_o^{-1}\left\{\frac12\right\}}\langle q\rangle_T.$$
In the case of a negatively curved manifold, this definition coincides with
$$q^-=\inf\left\{\int q\,d\mu, \mu\in \cM_{\frac12}\right\},$$
$$q^+=\sup\left\{\int q\,d\mu, \mu\in \cM_{\frac12}\right\}.$$
The function $H$ is real analytic and strictly concave in $]q_-, q_+[$. The function $P$ is real analytic, strictly convex on $\IR$.  
Clearly, $P(\beta)\geq \beta q_+$ for $\beta\geq 0$, and it is not very difficult to show that the limit $\lim_{\beta\To +\infty }P(\beta)-\beta q_+$ exists and is nonnegative\footnote{This limit is equal to $H(q^+)$.}. Similarly, 
$P(\beta)\geq \beta q_-$ for $\beta\leq 0$, the limit $\lim_{\beta\To -\infty }P(\beta)-\beta q_-$ exists and is nonnegative. 

The pressure and the entropy appear naturally when studying large deviations for the Birkhoff averages of the function $q$. Denote $J_t(\rho)$ the Jacobian of $DG^t$ going from $E^u(\rho)$ from $E^u(G^t\rho)$. Define $\varphi$ (the infinitesimal unstable Jacobian) by
$$\varphi(\rho)={\frac{dJ_t}{dt}}_{|t=0}(\rho).$$
On a manifold of dimension $d$ and constant negative curvature $-1$, the function $\varphi$ is constant,
equal to $d-1$. In general one can only say that it is a H\"older function. The function
$\varphi$ is not necessarily positive, but it is cohomologous to a positive function, for instance
$\la \varphi\ra_T$ for $T$ large enough. In what follows, we will assume without loss of generality that
$\varphi>0$.

 The two following large deviation results are due to Kifer \cite{Kif90}.
 
 \begin{thm}
\cite{Kif90}, Prop 3.2. 
Let $M$ be a compact manifold of negative sectional curvature. Let $q$ be a smooth function on $S^* M$. For $T>0$, define the function $\la q\ra_T=\frac{1}T\int_{-T/2}^{T/2}q\circ G^s ds$ on $S^*M$. Denote $L_{\frac12}$ the Liouville measure
on $S^*M$.

 Then  $$\lim_{T\To+\infty}\frac{\log \int_{S^* M} e^{T\la q\ra_T(\rho)}dL_{\frac12}(\rho)}{ T} =
 {\rm Pr}(q-\varphi).$$

\end{thm}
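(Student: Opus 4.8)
The plan is to deduce the large-deviation identity from the classical thermodynamic formalism for Anosov flows. The object $\int_{S^*M} e^{T\langle q\rangle_T(\rho)}\,dL_{\frac12}(\rho)$ is, up to the inessential factor coming from $\langle q\rangle_T$ being the symmetric average on $[-T/2,T/2]$ rather than $[0,T]$, a partition sum of the form $\int_{S^*M} e^{\int_0^T q(G^s\rho)\,ds}\,dL_{\frac12}(\rho)$; the change of integration window only shifts the integrand by a bounded amount (since $q$ is bounded, $\big|\int_{-T/2}^{T/2} q\circ G^s\,ds - \int_0^T q\circ G^s\,ds\big|\le 2\|q\|_\infty T/2$ is \emph{not} bounded — so one must instead use $G$-invariance of $L_{\frac12}$ to rewrite $\int_0^T q(G^s\rho)\,ds=\int_{-T/2}^{T/2}q(G^s G^{T/2}\rho)\,ds$ and then push forward by $G^{-T/2}$, which leaves $L_{\frac12}$ invariant). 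So the two partition sums are \emph{equal}, and it suffices to compute $\lim_T \tfrac1T\log \int_{S^*M} e^{\int_0^T q(G^s\rho)\,ds}\,dL_{\frac12}(\rho)$.

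Next I would relate the Liouville measure to the SRB/equilibrium-measure picture. The Liouville measure $L_{\frac12}$ is absolutely continuous with a smooth, bounded-above-and-below density, so it is equivalent (with bounded Radon--Nikodym derivative) to the equilibrium state of $-\varphi$, where $\varphi$ is the infinitesimal unstable Jacobian introduced in the excerpt: indeed the SRB measure of an Anosov flow is the equilibrium state of $-\varphi$, and it is absolutely continuous precisely in the contact/volume-preserving case, which is ours. Hence, up to a factor $e^{o(T)}$ (in fact $O(1)$), one may replace $dL_{\frac12}$ by $d\mu_{-\varphi}$ inside the integral. Now the standard variational/large-deviation machinery for equilibrium states (Kifer \cite{Kif90}; see also the thermodynamic formalism of \cite{BR75, Ru}) gives
$$\lim_{T\To+\infty}\frac1T\log\int_{S^*M} e^{\int_0^T q(G^s\rho)\,ds}\,d\mu_{-\varphi}(\rho)={\rm Pr}(q-\varphi)-{\rm Pr}(-\varphi).$$
Finally ${\rm Pr}(-\varphi)=0$: this is the classical statement that the topological pressure of minus the unstable Jacobian of a $C^{1+}$ Anosov flow vanishes (equivalently, that the SRB measure $\mu_{-\varphi}$ is a probability measure realizing $h_{KS}(\mu_{-\varphi})=\int\varphi\,d\mu_{-\varphi}$, i.e. the Pesin/Ruelle identity with equality). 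Combining the three steps yields $\lim_T \tfrac1T\log\int_{S^*M} e^{T\langle q\rangle_T}\,dL_{\frac12}={\rm Pr}(q-\varphi)$, as claimed.

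An alternative, more self-contained route is to bypass the SRB identification and instead invoke directly the level-2 large deviation principle of Kifer for the empirical measures $\frac1T\int_0^T \delta_{G^s\rho}\,ds$ under $L_{\frac12}$, whose rate function is $\nu\mapsto \int\varphi\,d\nu - h_{KS}(\nu)$ on $\cM_{\frac12}$ (this is exactly the content of \cite{Kif90} in the Anosov setting, and it is the result cited in the Remark following Theorem \ref{t:upperWeyl}). Applying the Laplace–Varadhan integral lemma to the bounded continuous functional $\nu\mapsto\int q\,d\nu$ gives
$$\lim_{T\To+\infty}\frac1T\log\int_{S^*M} e^{T\langle q\rangle_T(\rho)}\,dL_{\frac12}(\rho)=\sup_{\nu\in\cM_{\frac12}}\Big(\int q\,d\nu-\int\varphi\,d\nu+h_{KS}(\nu)\Big)={\rm Pr}(q-\varphi),$$
the last equality being the definition of the pressure. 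One still needs the full (upper and lower) LDP, and one technical point is that $\langle q\rangle_T$ is a time-symmetric average — but as noted above this is harmless by invariance of $L_{\frac12}$.

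The main obstacle is not conceptual but a matter of bookkeeping: one must be careful that the normalization ${\rm Pr}(-\varphi)=0$ is what makes the Liouville measure the right reference measure (so that no spurious constant survives in the limit), and one must justify the passage from $L_{\frac12}$ to an equilibrium state with only an $e^{o(T)}$ error — which is immediate here since the densities are bounded, but is the step where the constant-curvature (or more generally contact Anosov) hypothesis is genuinely used. Once that is in place, everything reduces to quoting Kifer's large deviation theorem and the variational principle.
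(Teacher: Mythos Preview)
The paper does not prove this statement at all: it is quoted as Proposition~3.2 of Kifer \cite{Kif90} and used as a black box. Your proposal is therefore not a comparison against a proof in the paper but rather a correct sketch of \emph{why} Kifer's result holds, and both routes you outline are sound.

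Two small refinements. First, for the geodesic flow on \emph{any} compact negatively curved manifold the Liouville measure on $S^*M$ is already the SRB measure (the flow preserves the contact volume), so $L_{\frac12}=\mu_{-\varphi}$ exactly and no ``$e^{o(T)}$ error'' or bounded-density argument is needed; your closing caveat singling out constant curvature is overly cautious. Second, your reduction from the symmetric window $[-T/2,T/2]$ to $[0,T]$ via the pushforward by $G^{-T/2}$ is the right move and makes the two partition sums literally equal, as you say; the aborted first attempt (bounding the difference of the two integrals pointwise) indeed fails, and it is good that you caught this. With those points noted, either the SRB route combined with ${\rm Pr}(-\varphi)=0$, or the direct application of Varadhan's lemma to Kifer's level-2 LDP with rate $\nu\mapsto\int\varphi\,d\nu-h_{KS}(\nu)$, gives the claimed limit.
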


As a consequence, one also has~: 
\begin{thm}\label{p:LDP} \cite{Kif90}, Thm 3.4 (i)
Let $M$ be a compact manifold of negative sectional curvature. Let $q$ be a smooth function on $S^* M$. For $T>0$, define the function $\la q\ra_T=\frac{1}T\int_{-T/2}^{T/2}q\circ G^s ds$ on $S^*M$. Denote $L_{\frac12}$ the Liouville measure
on $S^*M$.

 Then, for any closed interval $I\subset \IR$, we have
 $$\limsup_{T\To+\infty}\frac{\log \, L_{\frac12}\left\{\rho\in S^* M, \la q\ra_T(\rho)\in I\right\}}{ T}\leq \sup\left\{h_{KS}(\mu)-\int \varphi \,d\mu, \mu\in\cM_{\frac12}, \int q\,d\mu \in I \right\}.$$
 
 For any open interval $I\subset \IR$, we have
 $$\liminf_{T\To+\infty}\frac{\log \, L_{\frac12}\left\{\rho\in S^* M, \la q\ra_T(\rho)\in I\right\}}{ T}\geq \sup\left\{h_{KS}(\mu)-\int \varphi \,d\mu, \mu\in\cM_{\frac12}, \int q\,d\mu \in I\right\}.$$

(ii) Let $M$ be a compact manifold of dimension $d$, with constant sectional curvature $-1$. Then (i) can be rephrased as follows. Let $q$ be a smooth function on $S^* M$. For $T>0$, define the function $\la q\ra_T=\frac{1}T\int_{-T/2}^{T/2}q\circ G^s ds$ on $S^*M$. Denote $L_{\frac12}$ the Liouville measure
on $S^*M$.
Then, for any closed interval $I\subset \IR$, we have
 $$\limsup_{T\To+\infty}\frac{\log \, L_{\frac12}\left\{\rho\in S^* M, \la q\ra_T(\rho)\in I\right\}}{ T}\leq \sup\left\{H(\alpha), \alpha\in I\right\}-(d-1)$$
where $H$ is the function $H(\alpha)=\sup\left\{h_{KS}(\mu), \mu\in\cM_{\frac12}, \int q\,d\mu=\alpha\right\}$.
For any open interval $I\subset \IR$, we have
 $$\liminf_{T\To+\infty}\frac{\log \, L_{\frac12}\left\{\rho\in S^* M, \la q\ra_T(\rho)\in I\right\}}{ T}\geq \sup\left\{H(\alpha), \alpha\in I\right\}-(d-1).$$

\end{thm}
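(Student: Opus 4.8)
The plan is to deduce Theorem~\ref{p:LDP} from the preceding theorem (Kifer's Proposition~3.2) by a G\"artner--Ellis argument. The first step is to upgrade the moment computation to all inverse temperatures: replacing $q$ by $\beta q$ in Kifer's Proposition~3.2 (legitimate since $\beta q$ is again smooth) and noting $T\langle \beta q\rangle_T=\beta T\langle q\rangle_T$, one obtains, for every $\beta\in\IR$,
\[
\Lambda(\beta):=\lim_{T\To+\infty}\frac1T\log\int_{S^*M}e^{\beta T\langle q\rangle_T(\rho)}\,dL_{\frac12}(\rho)={\rm Pr}(\beta q-\varphi).
\]
The mechanism behind this identity is the Anosov structure together with the fact that $L_{\frac12}$ is the SRB measure of the geodesic flow: a Bowen ball $B(\rho,T,\eps)$ has $L_{\frac12}$-volume comparable, with constants independent of $T$ (by H\"older continuity of $\varphi$ and bounded distortion along unstable manifolds), to $e^{-\int_0^T\varphi(G^s\rho)\,ds}$; covering $S^*M$ by such balls centred at a maximal $(T,\eps)$-separated set $E$ therefore gives $\int_{S^*M}e^{\beta T\langle q\rangle_T}dL_{\frac12}\asymp\sum_{\rho\in E}e^{\beta\int_0^Tq(G^s\rho)ds-\int_0^T\varphi(G^s\rho)ds}$, whose exponential growth rate is by definition the topological pressure ${\rm Pr}(\beta q-\varphi)$. (Passing between the symmetric average $\langle q\rangle_T=\frac1T\int_{-T/2}^{T/2}q\circ G^s\,ds$ and the one-sided one only precomposes with $G^{-T/2}$, which preserves $L_{\frac12}$, so it is immaterial.)

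Next I would apply the G\"artner--Ellis theorem to the real-valued random variables $\langle q\rangle_T$ on the probability space $(S^*M,L_{\frac12})$. Because $q$ is bounded, $\Lambda$ is finite on all of $\IR$; because $q$ is smooth (hence H\"older) the pressure is real-analytic along the affine family $\beta\mapsto\beta q-\varphi$, so $\Lambda$ is everywhere differentiable and convex, which is exactly the hypothesis needed for the full large deviation principle with good rate function $\Lambda^*(\alpha)=\sup_{\beta\in\IR}\big(\alpha\beta-{\rm Pr}(\beta q-\varphi)\big)$. Feeding in the variational principle ${\rm Pr}(\beta q-\varphi)=\sup_{\mu\in\cM_{\frac12}}\big(h_{KS}(\mu)+\beta\int q\,d\mu-\int\varphi\,d\mu\big)$ turns $\Lambda^*(\alpha)$ into a $\sup_\beta\inf_\mu$ of a function that is affine in $\mu$ over the weak-$*$ compact convex set $\cM_{\frac12}$ and linear in $\beta$; Sion's minimax theorem (using weak-$*$ compactness of $\cM_{\frac12}$, the affineness of $h_{KS}$, and its upper semicontinuity for the expansive Anosov flow) lets me exchange the two, and the inner $\sup_\beta$ then forces the constraint $\int q\,d\mu=\alpha$, yielding
\[
\Lambda^*(\alpha)=\inf\Big\{\textstyle\int\varphi\,d\mu-h_{KS}(\mu)\ :\ \mu\in\cM_{\frac12},\ \int q\,d\mu=\alpha\Big\}.
\]
The large deviation upper bound for a closed interval $I$ then reads $\limsup_T\frac1T\log L_{\frac12}\{\langle q\rangle_T\in I\}\le-\inf_{\alpha\in I}\Lambda^*(\alpha)=\sup\{h_{KS}(\mu)-\int\varphi\,d\mu:\mu\in\cM_{\frac12},\int q\,d\mu\in I\}$, and the lower bound for an open interval $I$ gives the same quantity as a $\liminf$ lower bound; this is part~(i).

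Part~(ii) is then a pure substitution: in constant curvature $-1$ and dimension $d$ the infinitesimal unstable Jacobian $\varphi$ is the constant $d-1$, so $h_{KS}(\mu)-\int\varphi\,d\mu=h_{KS}(\mu)-(d-1)$ and $\sup\{h_{KS}(\mu):\mu\in\cM_{\frac12},\int q\,d\mu=\alpha\}=H(\alpha)$, whence $\sup\{h_{KS}(\mu)-\int\varphi\,d\mu:\int q\,d\mu\in I\}=\sup_{\alpha\in I}H(\alpha)-(d-1)$, which is the stated form. The step I expect to cost the most work is the moment-generating-function identity $\Lambda(\beta)={\rm Pr}(\beta q-\varphi)$: it genuinely uses negative curvature (not merely ergodicity) through the absolute continuity of the conditional measures of $L_{\frac12}$ along unstable manifolds and the uniform-in-$T$ distortion control; the minimax exchange also rests on the (nontrivial) upper semicontinuity of $h_{KS}$ for the flow. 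An alternative route that sidesteps G\"artner--Ellis is to first prove the level-$2$ large deviation principle for the empirical measures $\rho\mapsto\frac1T\int_0^T\delta_{G^s\rho}\,ds$ on $\cM_{\frac12}$ with rate function $\mu\mapsto\int\varphi\,d\mu-h_{KS}(\mu)$ (a consequence of the Gibbs/equilibrium-state formalism for the Anosov flow) and then apply the contraction principle along the continuous map $\mu\mapsto\int q\,d\mu$, which produces exactly the same rate function.
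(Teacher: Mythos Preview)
Your argument is correct. You derive the level-1 large deviation principle from the cumulant generating function $\Lambda(\beta)={\rm Pr}(\beta q-\varphi)$ (the preceding theorem, applied with $q$ replaced by $\beta q$), invoke G\"artner--Ellis using the real-analyticity of pressure along H\"older families to get differentiability of $\Lambda$, and then identify the rate function via the variational principle and a minimax exchange. The substitution $\varphi\equiv d-1$ in constant curvature gives part~(ii). The alternative you mention at the end (level-2 LDP for empirical measures plus contraction along $\mu\mapsto\int q\,d\mu$) is in fact exactly Kifer's original route in \cite{Kif90}.

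The paper itself does not include a proof in the body (the result is simply quoted from \cite{Kif90}), but the source contains, after \verb|\end{document}|, a discarded geometric argument for the constant-curvature case that is genuinely different from yours. That argument starts from the periodic-orbit counting asymptotic
\[
\lim_{T\to\infty}\frac1T\log\sharp\Big\{\gamma:\ l_\gamma\le T,\ \int q\,d\mu_\gamma\in I\Big\}=\sup_{\alpha\in I}H(\alpha),
\]
uses the Anosov Closing Lemma to shadow each $\rho$ with $\langle q\rangle_T(\rho)\in I$ by a closed orbit $\gamma$ of length $\le T+T_0$ at distance $\le e^{-(1-\eps)T/2}$ (so that $\int q\,d\mu_\gamma$ lies in a slight enlargement of $I$), bounds the redundancy of this shadowing, and then reads off the Liouville volume by observing that the $e^{-(1\pm\eps)T/2}$-tube around $\gamma$ has volume $\sim l_\gamma e^{-(1\pm\eps)(d-1)T}$. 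This yields directly
\[
\lim_{T\to\infty}\frac1T\log L_{\frac12}\{\langle q\rangle_T\in I\}=\sup_{\alpha\in I}H(\alpha)-(d-1).
\]

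What each route buys: the discarded argument is elementary and transparent in constant curvature (no pressure differentiability, no minimax, no G\"artner--Ellis machinery), but the tube-volume step $\sim e^{-(d-1)T}$ is specific to $\varphi\equiv d-1$ and does not immediately give part~(i) in variable curvature. Your G\"artner--Ellis approach handles~(i) and~(ii) uniformly and makes explicit that the rate function is the Legendre dual of $\beta\mapsto{\rm Pr}(\beta q-\varphi)$; the price is that you must justify the minimax exchange (upper semicontinuity of $h_{KS}$ and weak-$*$ compactness of $\cM_{\frac12}$, which you correctly flag) and the differentiability of $\Lambda$. One small caveat on the lower bound: G\"artner--Ellis only produces the open-set lower bound at \emph{exposed} points of $\Lambda^*$; when $q$ is cohomologous to a constant, $\Lambda$ is affine and the exposed-point condition can fail at the endpoints, but in that degenerate case the statement is trivial. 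Your level-2 alternative avoids this technicality entirely.
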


The pressure and entropy functions also appear when counting closed geodesics $\gamma$ with a given $q$-average~:
$${\rm Pr}(q)=\lim_{t\To+\infty }\frac1t\log\sum_{\gamma,\, l_\gamma\leq t}e^{\int q\,d\gamma},$$
and as consequence
$$\limsup_{t\To +\infty}\frac1t\log\sharp\left\{ \gamma, \,l_\gamma\leq t, \int q\,\,d\mu_\gamma\in I\right\}\leq
\sup\left\{H(\alpha), \alpha\in I\right\},$$
(for a closed interval $I$),
$$\liminf_{t\To +\infty}\frac1t\log\sharp\left\{ \gamma, \,l_\gamma\leq t, \int q\,\,d\mu_\gamma\in I\right\}\geq
\sup\left\{H(\alpha), \alpha\in I\right\},$$
(for an open interval $I$). See \cite{Kif94}.

In negative variable curvature, we will also need the following variant of Theorem \ref{p:LDP}~:
\begin{thm} \label{t:LDP2}
Let $M$ be a compact manifold of negative sectional curvature. Let $q$ be a smooth function on $S^* M$. Let $\phi$ be a  smooth positive function. Denote $L_{\frac12}$ the Liouville measure
on $S^*M$. For $\rho\in S^* M$ and $t\in\IR$, define $\cT_\rho(t)$ by $\int_0^{\cT_\rho(t)}\phi(G^s\rho)ds=(d-1)t$.  For $t>0$, define the function $$\la q\ra_{\cT(-t/2), \cT(t/2)}=\frac{1}{\cT_\rho(t/2)-\cT_\rho(-t/2)}\int_{\cT_\rho(-t/2)}^{\cT_\rho(t/2)}q\circ G^s (\rho)ds$$ on $S^*M$.

 Then, for any closed interval $I\subset \IR$, we have
 \begin{multline*}\limsup_{t\To+\infty}\frac{\log \, L_{\frac12}\left\{\rho\in S^* M, \la q\ra_{\cT(-t/2), \cT(t/2)}(\rho)\in I\right\}}{ t}\\ \leq (d-1) \sup\left\{\frac{h_{KS}(\mu)}{\int \phi \,d\mu}- \frac{\int \varphi \,d\mu}{\int \phi \,d\mu}, \mu\in\cM_{\frac12}, \int q\,d\mu \in I \right\}.\end{multline*}

 For any open interval $I\subset \IR$, we have
 \begin{multline*}\liminf_{t\To+\infty}\frac{\log \, L_{\frac12}\left\{\rho\in S^* M, \la q\ra_{\cT(-t/2), \cT(t/2)}(\rho)\in I\right\}}{ t}\\ \geq (d-1) \sup\left\{\frac{h_{KS}(\mu)}{\int \phi \,d\mu}- \frac{\int \varphi \,d\mu}{\int \phi \,d\mu}, \mu\in\cM_{\frac12}, \int q\,d\mu \in I \right\}.\end{multline*}
 
\end{thm}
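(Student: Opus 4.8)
The plan is to realize $\la q\ra_{\cT(-t/2),\cT(t/2)}$ as a Birkhoff-type average for a time change of the geodesic flow and then to invoke the large deviation principle of Theorem~\ref{p:LDP}, in its level-$2$ refinement (also contained in \cite{Kif90}), for that new flow. Since $\phi>0$ is smooth I would introduce the flow $\widetilde G$ on $S^*M=p_o^{-1}\{\frac12\}$ defined by $\widetilde G^u\rho=G^{\cT_\rho(u)}\rho$; the relation $\int_0^{\cT_\rho(u)}\phi(G^s\rho)\,ds=(d-1)u$ yields the cocycle identity, so $\widetilde G$ is a genuine flow, generated by $\frac{d-1}{\phi}X$, and it is again Anosov (a time change of an Anosov flow by a positive function is Anosov), with the same weak-stable and weak-unstable bundles as $G$. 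Differentiating its unstable Jacobian along orbits, one finds that the infinitesimal unstable Jacobian of $\widetilde G$ is $\widetilde\varphi=(d-1)\frac{\varphi}{\phi}$, still a H\"older function. The map $\mu\mapsto\widetilde\mu:=\frac{\phi\,\mu}{\int\phi\,d\mu}$ is a bijection from $\cM_{\frac12}$ onto the set of $\widetilde G$-invariant probability measures on $S^*M$; Abramov's formula gives $h_{KS}(\widetilde G,\widetilde\mu)=\frac{(d-1)\,h_{KS}(\mu)}{\int\phi\,d\mu}$, and one computes $\int\widetilde\varphi\,d\widetilde\mu=\frac{(d-1)\int\varphi\,d\mu}{\int\phi\,d\mu}$ and $\frac{\int(q/\phi)\,d\widetilde\mu}{\int(1/\phi)\,d\widetilde\mu}=\int q\,d\mu$.

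Next, changing variables $s=\cT_\rho(u)$ in the integrals defining $\la q\ra_{\cT(-t/2),\cT(t/2)}(\rho)$ turns it into a ratio of Birkhoff averages for $\widetilde G$:
$$\la q\ra_{\cT(-t/2),\cT(t/2)}(\rho)=\Psi\big(\cE_t^{\widetilde G}(\rho)\big),\qquad \Psi(\nu):=\frac{\int(q/\phi)\,d\nu}{\int(1/\phi)\,d\nu},$$
where $\cE_t^{\widetilde G}(\rho)=\frac1t\int_{-t/2}^{t/2}\delta_{\widetilde G^u\rho}\,du$ is the two-sided empirical measure of $\rho$ under $\widetilde G$. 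Since $1/\phi$ is bounded away from $0$, the functional $\Psi$ is continuous on the compact space of Borel probability measures on $S^*M$ endowed with the weak-$*$ topology. Finally, the Liouville measure $L_{\frac12}$ is equivalent to the $\widetilde G$-invariant measure $\widetilde L:=\frac{\phi\,L_{\frac12}}{\int\phi\,dL_{\frac12}}$ with density bounded above and below, so in all the quantities $\frac1t\log(\cdot)$ appearing in the statement one may replace $L_{\frac12}$ by $\widetilde L$.

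The heart of the argument is Kifer's level-$2$ large deviation principle \cite{Kif90} applied to the Anosov flow $\widetilde G$. Here $\widetilde L$ is the equilibrium state of $-\widetilde\varphi$ (equivalently, the SRB measure of $\widetilde G$): one checks ${\rm Pr}_{\widetilde G}(-\widetilde\varphi)=0$, attained only at $\widetilde L$, using $h_{KS}(\mu)\le\int\varphi\,d\mu$ with equality only for $L_{\frac12}$. Hence, under $\widetilde L$, the empirical measures $\cE_t^{\widetilde G}$ obey an LDP with good rate function $\widetilde I(\nu)=\int\widetilde\varphi\,d\nu-h_{KS}(\widetilde G,\nu)$ on $\widetilde G$-invariant $\nu$ and $+\infty$ otherwise (the $\widetilde G$-invariance of $\widetilde L$ lets one pass freely between the two-sided windows $[-t/2,t/2]$ and one-sided ones). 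Applying the contraction principle through the continuous map $\Psi$ — the upper bound to the closed set $\Psi^{-1}(I)$, the lower bound to the open set $\Psi^{-1}(I)$ — and then translating via $\nu=\widetilde\mu$ and the identities of the first paragraph, one obtains
\begin{multline*}
\limsup_{t\To+\infty}\frac{\log L_{\frac12}\{\rho:\ \la q\ra_{\cT(-t/2),\cT(t/2)}(\rho)\in I\}}{t}\\
\leq\sup\Big\{h_{KS}(\widetilde G,\widetilde\mu)-\int\widetilde\varphi\,d\widetilde\mu:\ \mu\in\cM_{\frac12},\ \int q\,d\mu\in I\Big\}\\
=(d-1)\sup\Big\{\frac{h_{KS}(\mu)-\int\varphi\,d\mu}{\int\phi\,d\mu}:\ \mu\in\cM_{\frac12},\ \int q\,d\mu\in I\Big\},
\end{multline*}
which is the claimed upper bound, and symmetrically for the $\liminf$ with an open interval.

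I expect the main obstacle to be having available the correct form of the large deviation principle: Theorem~\ref{p:LDP} as stated concerns a single Birkhoff average, whereas $\la q\ra_{\cT(-t/2),\cT(t/2)}$ is a \emph{ratio} of two of them, so one genuinely needs the multidimensional (level-$2$) version together with the continuity of $\Psi$ — which is exactly where the positivity and boundedness of $\phi$ enter. Everything else — that $\widetilde G$ is Anosov, the formula for $\widetilde\varphi$, Abramov's formula for $h_{KS}(\widetilde G,\widetilde\mu)$, and the harmless replacement of $L_{\frac12}$ by $\widetilde L$ — is routine bookkeeping of the time change.
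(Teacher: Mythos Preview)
Your proof is correct and follows the same strategy as the paper: introduce the time-changed flow $\widetilde G^u\rho=G^{\cT_\rho(u)}\rho$, translate invariant measures, entropies, and the unstable Jacobian via Abramov's formula, and then invoke Kifer's large deviation result for the new Anosov flow. The paper's proof is a terse three-line sketch that simply says ``apply Theorem~3.4 in \cite{Kif90} to $\bar G$''; you spell out the one point the paper leaves implicit, namely that $\la q\ra_{\cT(-t/2),\cT(t/2)}$ is a \emph{ratio} of $\widetilde G$-Birkhoff averages of $q/\phi$ and $1/\phi$, so one needs the level-$2$ (empirical-measure) form of Kifer's LDP together with the contraction principle through the continuous functional $\Psi(\nu)=\int(q/\phi)\,d\nu\big/\int(1/\phi)\,d\nu$. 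This is exactly what Kifer's Theorem~3.4 provides, so your concern in the last paragraph is already answered by the reference the paper cites; the rest (Anosov property of the time change, $\widetilde\varphi=(d-1)\varphi/\phi$, Abramov, equivalence of $L_{\frac12}$ and $\widetilde L$) is, as you say, routine bookkeeping.
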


\begin{proof} Define a flow $\bar G$ on $S^*M$ that has the same trajectories as $G$ but different speed~:
$\bar G^t(\rho)=G^{\cT_\rho(t)}(\rho)$.
For the new flow, the infinitesimal unstable Jacobian is equal to $(d-1)\frac{\int \varphi \,d\mu}{\int \phi \,d\mu}$. If $\mu$ is an invariant probability measure of $G$, then
$d\bar\mu=\frac{\phi\,\,d\mu}{\int \phi\,\,d\mu}$ is an invariant probability measure of $\bar G$.
Besides, their entropies are related by the Abramov formula~:
$$h_{KS}(\bar\mu)=(d-1)\frac{h_{KS}(\mu)}{\int \phi\,\,d\mu},$$
where the entropies of $\bar\mu$ and $\mu$ are computed with respect to $\bar G$ and $G$ respectively.

The theorem is then again an application of Theorem 3.4 in \cite{Kif90} for the Anosov flow $\bar G$.
\end{proof}

\section{Averaging\label{s:ave}}
We are now ready to start the proof of Theorem \ref{t:upperWeyl}. We will work in dimension $d$ and constant negative $-1$. The changes to make in order to get Theorem \ref{t:vari} are indicated in Remarks \ref{r:change} and \ref{r:change2}.

The following proposition is proved in \cite{Sj00}, \S 2~:
\begin{prop} Let $T>0$, there exists an invertible selfadjoint pseudodifferential operator $A_T\in \Psi DO^0$
such that
$$A_T^{-1}( P+i\hbar  Q(z))A_T= P+i\hbar\Op_\hbar(q^T(z))+\hbar^2 R_T(z)$$
for $z\in \Omega$;
with $R_T\in\Psi DO^0$ depending holomorphically on $z\in \Omega$, and with $q^T(z)\in S^{1 }$ depending holomorphically on $z\in \Omega$, equal to $\la q\ra_T-q+q_z$ in a neighbourhood
of $p_o^{-1}\left(\frac12\right)$.
\end{prop}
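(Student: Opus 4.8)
The plan is to construct the conjugating operator $A_T$ by exponentiating a suitable pseudodifferential operator built from the Birkhoff-type average of $q$ along the geodesic flow, following the averaging idea already used by Sj\"ostrand. Concretely, for fixed $T>0$ let $q_z$ denote the principal symbol of $Q(z)$, and recall the averaged symbol $\langle q\rangle_T = \frac{1}{T}\int_{-T/2}^{T/2} q\circ G^s\,ds$ near $p_o^{-1}(\frac12)$. The natural ansatz is $A_T = e^{B_T}$ where $B_T = \Op_\hbar(b_T)$ is self-adjoint with real symbol $b_T$ of order $0$, chosen so that the first-order-in-$\hbar$ term of the conjugation cancels the part of $q_z$ that is not already flow-invariant. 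First I would compute, via the exact Egorov/conjugation expansion,
\begin{equation*}
A_T^{-1}(P+i\hbar Q(z))A_T = P + i\hbar\big(Q(z) + \tfrac{1}{i\hbar}[P,B_T] + \cdots\big),
\end{equation*}
and observe that $\frac{1}{i\hbar}[P,B_T]$ has principal symbol $\{p_o, b_T\} = \frac{d}{ds}\big|_{s=0} b_T\circ G^s$, i.e. the derivative of $b_T$ along the geodesic flow. Hence the cohomological equation to solve at the symbol level is $\{p_o, b_T\} = \langle q\rangle_T - q$ (near the energy shell), which has the explicit solution $b_T(x,\xi) = \frac{1}{T}\int_{-T/2}^{T/2} s\, (q\circ G^s)(x,\xi)\, ds$ — this is a bounded smooth symbol because $T$ is fixed, giving $B_T\in\Psi DO^0$.

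Next I would turn this formal computation into an exact identity. Because $P = -\hbar^2\lap/2$ is a differential operator and $B_T\in\Psi DO^0$, the commutator $[P,B_T]$ is $O(\hbar)$ in the appropriate operator-class sense, and the Baker--Campbell--Hausdorff / Duhamel expansion of $e^{-B_T}(P+i\hbar Q(z))e^{B_T}$ terminates into a principal part plus an explicit remainder. After choosing $b_T$ as above so that the $O(\hbar)$ symbol of $e^{-B_T}Pe^{B_T} + i\hbar Q(z)$ equals $P + i\hbar\Op_\hbar(\langle q\rangle_T - q + q_z)$ up to $O(\hbar^2)$, I collect all the leftover terms — the higher commutators $[B_T,[B_T,P]]$, the subprincipal contributions, and the lower-order parts of $Q(z)$ — into a single operator $\hbar^2 R_T(z)$, and I check using the symbolic calculus (composition, boundedness on $L^2$ for order-$0$ operators) that $R_T(z)\in\Psi DO^0$. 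Holomorphy in $z\in\Omega$ is inherited at each step: $Q(z)$ is assumed holomorphic, $q_z$ enters $q^T(z)$ holomorphically, and $b_T$ can be taken independent of $z$ (it only involves $q = q_{1/2}$, or one absorbs the $z$-dependence of the principal symbol into $q^T(z)$), so the remainder, being built by finitely many compositions and integrals, depends holomorphically on $z$ as well. Finally, self-adjointness of $A_T$ follows from $A_T = e^{B_T}$ with $B_T$ self-adjoint, and invertibility from $A_T^{-1} = e^{-B_T}$; I should double-check that $\Op_\hbar(b_T)$ can be arranged to be exactly self-adjoint (e.g. by Weyl quantization of the real symbol $b_T$, or by replacing $B_T$ with $\frac12(B_T+B_T^*)$, which only changes lower-order terms).

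The main obstacle I anticipate is the bookkeeping in the operator-class calculus rather than any conceptual difficulty: one must verify that every remainder term genuinely lands in $\Psi DO^0$ with holomorphic $z$-dependence, which requires the composition and boundedness properties of the symbol classes defined in Section \ref{s:symbols}, and one must track carefully that the neighbourhood of $p_o^{-1}(1/2)$ on which $q^T(z)=\langle q\rangle_T - q + q_z$ holds is preserved (the cohomological equation is only solved near the energy shell, so a cutoff is needed and its error must be shown negligible, i.e. absorbed into $R_T$ or treated as a smoothing term on the relevant spectral window). Since this proposition is quoted verbatim from \cite{Sj00}, \S 2, I would in practice simply cite it; the sketch above records how the construction goes.
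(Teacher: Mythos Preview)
Your approach is essentially the same as the paper's (and Sj\"ostrand's): solve the cohomological equation $\{p_o,g_T\}=q-\langle q\rangle_T$ near the energy shell and conjugate by an operator built from $g_T$. Two small points. First, the paper takes $A_T=\Op_\hbar(e^{g_T})$, i.e.\ it quantizes the exponential of the symbol, rather than your $A_T=e^{\Op_\hbar(b_T)}$; the two choices agree to leading order and both yield invertible self-adjoint operators, so this is a cosmetic difference. Second, your explicit formula $b_T=\frac{1}{T}\int_{-T/2}^{T/2}s\,q\circ G^s\,ds$ is wrong: integrating by parts gives $\{p_o,b_T\}=\tfrac12(q\circ G^{T/2}+q\circ G^{-T/2})-\langle q\rangle_T$, not $\langle q\rangle_T-q$. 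The correct solution (the one recalled in the paper) is
\[
g_T=\tfrac12\int_0^{T/2}\Big(\tfrac{2s}{T}-1\Big)q\circ G^s\,ds+\tfrac12\int_{-T/2}^{0}\Big(\tfrac{2s}{T}+1\Big)q\circ G^s\,ds,
\]
which satisfies $\{p_o,g_T\}=q-\langle q\rangle_T$. With this correction your sketch goes through.
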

The definition of our symbol classes $S^m $ and operator classes $\Psi DO^m$ is given in Section \ref{s:symbols}.

We recall that the operator $A_T$ constructed by Sj\"ostrand is $A=\Op_\hbar(e^{g_T})$,
where
$$g_T= \frac12\int_0^{T/2}\left(\frac{2s}T-1\right)q\circ G^{s}ds +\frac12\int_{-T/2}^0\left(\frac{2s}T+1\right) q\circ G^{s}
ds$$
on $p_o^{-1}\left(\frac12\right)$. The function $g_T$ solves $\{p_o, g_T\}=q-\la q\ra_T.$
Exactly the same proof yields~:
\begin{prop}\label{p:ave} Assume $M$ has constant curvature $-1$. Let $\eps>0$ and $T=(1-4\eps)|\log\hbar|$. Define $\delta=\frac{1-\eps}2$.
There exists an invertible selfadjoint pseudodifferential operator $A_T \in \Psi DO^0_\delta$
such that
 $$A_T^{-1}( P+i\hbar  Q(z))A_T= P+i\hbar\Op_\hbar(q^T(z))+\hbar^2 R_T(z)$$for $z\in \Omega$;
with $R_T\in \hbar^{-2\delta}\Psi DO_\delta^0$ depending holomorphically on $z$, and with $q^T(z)\in S_\delta^{1}$ depending holomorphically on $z$, equal to $\la q\ra_T-q+q_z$ in a neighbourhood
of $p_o^{-1}\left(\frac12\right)$.\end{prop}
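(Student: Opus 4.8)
The plan is to mimic Sj\"ostrand's construction of the conjugating operator $A_T=\Op_\hbar(e^{g_T})$ from the cited Proposition, but to track carefully how the symbol estimates degrade when $T$ is allowed to grow logarithmically with $\hbar$, rather than staying fixed. Recall that on a manifold of constant curvature $-1$ the derivatives of $q\circ G^s$ along unstable directions grow like $e^{(1+\eps)|s|}$, so the symbol $g_T$, which involves an integral of $q\circ G^s$ over $|s|\le T/2$, has $k$-th derivatives of size roughly $e^{(1+\eps)T/2}$. With $T=(1-4\eps)|\log\hbar|$ this is $\hbar^{-(1-4\eps)(1+\eps)/2}$, which for small $\eps$ is bounded by $\hbar^{-\delta}$ with $\delta=\frac{1-\eps}{2}$; one should check the elementary inequality $(1-4\eps)(1+\eps)<1-\eps$ (true for $\eps$ small, since the left side is $1-3\eps-4\eps^2$). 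This is exactly what it means for $g_T$ (and hence $e^{g_T}$) to lie in the exotic symbol class $S^0_\delta$, where one allows a loss of $\hbar^{-\delta}$ per derivative. So the first step is to record these dynamical derivative bounds and conclude $A_T=\Op_\hbar(e^{g_T})\in\Psi DO^0_\delta$.

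The second step is to run the conjugation computation. Writing $A_T^{-1}(P+i\hbar Q(z))A_T$ and expanding via the semiclassical symbol calculus in the class $\Psi DO_\delta$, the principal part is $P$ (since conjugating $P$ by a function only changes lower-order terms), the order-$\hbar$ term is $P$-transported: one gets $i\hbar\Op_\hbar\big(q_z+\{p_o,g_T\}\big)=i\hbar\Op_\hbar(q_z+q-\langle q\rangle_T)$... wait, I must be careful with the sign; $g_T$ is constructed precisely so that $\{p_o,g_T\}=q-\langle q\rangle_T$, and the conjugation produces $q_z - (q - \langle q\rangle_T) = \langle q\rangle_T - q + q_z$ as the new subprincipal symbol near $p_o^{-1}(\tfrac12)$, matching the statement. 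The remainder $\hbar^2 R_T(z)$ collects the second-order terms of the calculus; because each application of the calculus in $\Psi DO_\delta$ costs an extra $\hbar^{1-2\delta}$ (two derivatives paired with one factor of $\hbar$, each derivative costing $\hbar^{-\delta}$), and because $e^{\pm g_T}$ themselves are only bounded by $\hbar^{-\delta}$-type quantities in the exotic class, one finds $R_T\in \hbar^{-2\delta}\Psi DO^0_\delta$ — the stated loss. Holomorphic dependence on $z\in\Omega$ is inherited from the holomorphic dependence of $Q(z)$ (hence $q_z$) and from the fact that $g_T$ depends on $z$ only through the holomorphic symbol $q_z$, if at all; in the damped-wave normalization $g_T$ is built from the $z$-independent function $q$, so holomorphy is immediate.

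Concretely the steps are: (i) establish, from the Anosov/constant-curvature bounds $\norm{DG^s_\rho}\le Ce^{(1+\eps)|s|}$, the symbol estimates $|\partial^\alpha g_T|=\O(\hbar^{-\delta})$ for the chosen $T$, hence $e^{g_T}\in S^0_\delta$ and $A_T,A_T^{-1}\in\Psi DO^0_\delta$ with $A_T$ self-adjoint and invertible for $\hbar$ small (invertibility because $e^{g_T}$ is bounded away from $0$ and $\infty$ in $C^0$, uniformly — the $\hbar^{-\delta}$ loss is only in derivatives, not in the function itself); (ii) compute the conjugation in the $\Psi DO_\delta$-calculus, identifying the principal symbol $p_o$, the subprincipal symbol $\langle q\rangle_T - q + q_z$ (using $\{p_o,g_T\}=q-\langle q\rangle_T$), and lumping the rest into $\hbar^2R_T$; (iii) bound the remainder, showing $R_T\in\hbar^{-2\delta}\Psi DO^0_\delta$ by counting the powers of $\hbar^{-\delta}$ produced by the two-step terms of the composition formula; (iv) note holomorphy in $z$ throughout. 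I expect the main obstacle to be step (iii): one must be genuinely careful that the exotic calculus $\Psi DO_\delta$ with $\delta<\tfrac12$ is well-behaved (which requires $\delta<\tfrac12$, hence $\eps>0$ is essential), that the composition and Beals-type characterizations used by Sj\"ostrand survive in this class, and that the accumulated losses are exactly $\hbar^{-2\delta}$ and no worse — in particular that the gain $\hbar^{1-2\delta}=\hbar^{\eps}$ per order of the calculus genuinely beats the per-derivative losses so the asymptotic expansion still makes sense. All of this is "the same proof" as the fixed-$T$ case, but the bookkeeping of the $\hbar$-powers against $T=(1-4\eps)|\log\hbar|$ is where the real content lies; the point of the factor $1-4\eps$ (rather than, say, $1-\eps$) is precisely to leave enough room to absorb the $(1+\eps)$ exponential growth rate and still land in a class with $\delta<\tfrac12$.
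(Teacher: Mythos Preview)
Your proposal is correct and follows exactly the approach the paper indicates: the paper's entire proof consists of recalling Sj\"ostrand's construction $A_T=\Op_\hbar(e^{g_T})$ with $\{p_o,g_T\}=q-\langle q\rangle_T$ and then stating ``Exactly the same proof yields'' the logarithmic-time version. Your outline---tracking that in constant curvature the $k$-th derivatives of $q\circ G^s$ grow like $e^{k(1+\eps)|s|}$, verifying $(1-4\eps)(1+\eps)<1-\eps$ so that the per-derivative loss is at most $\hbar^{-\delta}$, and then running the $\Psi DO_\delta$ calculus to extract the subprincipal symbol $\langle q\rangle_T-q+q_z$ with remainder in $\hbar^{-2\delta}\Psi DO^0_\delta$---is precisely the bookkeeping the paper is suppressing.

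Two minor notational slips, neither of which affects the argument: when you write ``$k$-th derivatives of size roughly $e^{(1+\eps)T/2}$'' and ``$|\partial^\alpha g_T|=\cO(\hbar^{-\delta})$'', you mean $e^{k(1+\eps)T/2}$ and $\cO(\hbar^{-\delta|\alpha|})$ respectively; your subsequent check of the per-derivative inequality shows you have the right picture. Also, $g_T$ itself is only $\cO(T)=\cO(|\log\hbar|)$ in $C^0$, so $e^{g_T}$ may carry a polynomial power of $\hbar^{-1}$; this is harmless for the conjugation since those factors cancel in $A_T^{-1}(\cdot)A_T$, but strictly speaking one should either normalize $g_T$ or note that the commutator expansion sees only derivatives of $g_T$.
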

 

In what follows, we will restrict our attention to a region where $|z-\frac12|=\cO(\hbar)$. As a consequence, we can write
\begin{equation} P+i\hbar\Op_\hbar(q^T(z))+\hbar^2 R_T(z)= P+i\hbar\Op_\hbar(q^T)+\hbar \tR_T(z)
\label{e:theope}\end{equation}
with $q^T=q^T\left(\frac12\right)=\la q\ra_T$ in a neighbourhood
of $p_o^{-1}\left(\frac12\right)$, and $\tR_T(z)$ is a pseudodifferential operator depending holomorphically on $z\in \Omega$, tending to zero when $\hbar\To 0$ and $|z-\frac12|=\cO(\hbar)$. More precisely,
$$ \tR_T(z)=\left(z-\frac12\right) Q'(z)+ \hbar R_T(z),$$
$R_T\in \hbar^{-2\delta}\Psi DO_\delta^0$ depending holomorphically on $z$, and 
$Q'(z)\in \Psi DO_\delta^1$ depending holomorphically on $z$.

\begin{rem}\label{r:change} To treat the case of variable curvature, we should modify Proposition \ref{p:ave} as follows. Fix $\phi$ a smooth function such that $  \phi\geq \varphi $. Define $\cT_{\rho}( \frac{T}2)$, $\cT_{\rho}(-\frac{T}2)$ as in Theorem \ref{t:LDP2}, in a neighbourhood of $p_o^{-1}\left(\frac12\right)$.
We have to choose $\phi$ smooth because we want $\cT_\rho$ to depend smoothly on $\rho$.
In dimension $d=2$, we have $\cT_{\rho}( \frac{T}2)\in S^0_\delta$ with $\delta=\frac{1-\eps}2$ (which may not be true for $d>2$ since the unstable Jacobian no longer controls the derivatives of the geodesic flow).
In Proposition \ref{p:ave}, we now define $A_T =\Op_\hbar(e^{g_T})$ where
$$g_T(\rho)= \frac12\int_0^{\cT_\rho(T/2)}\left(\frac{s}{\cT_\rho(T/2)}-1\right)q\circ G^{s}ds +\frac12\int_{-\cT_\rho(T/2)}^0\left(\frac{s}{\cT_\rho(T/2)}+1\right) q\circ G^{s}
ds$$
on $p_o^{-1}\left(\frac12\right)$. In the last sentence of Proposition \ref{p:ave}, we replace $\la q\ra_T$ by $\la q\ra_{\cT(-\frac{ T}2), \cT( \frac{T}2)}+r_T$ where $r_T=q-\{p_o, g_T\}-\la q\ra_{\cT(-\frac{ T}2), \cT( \frac{T}2)}$ satisfies $r_T\in |\log\hbar|^{-1}S^0_\delta$ and $\{p_o, r_T\}\in |\log\hbar |^{-1}S^0_\delta$.  For $d=2$, all the operators $A_T$, $R_T$ {\em etc} stay in the same class as stated in Proposition \ref{p:ave}.
\end{rem}

\section{Perturbations with controlled trace norm.\label{s:pert}}
In the following sections, we let $z$ vary in a disc of radius $\cO(\hbar)$ around  $\frac12$. We will write $2z=1+\zeta$, $\zeta=\cO(\hbar)$. We consider the operator \eqref{e:theope}, that we write
\begin{equation}\label{e:theope2}
\cP_T=\cP_T(z)= P+i\hbar  Q_T+\hbar \tR_T(z),\qquad  Q_T= \Op_\hbar(q^T).
\end{equation}
 Note that we have
 $$\left\{p_o, q^T\right\}=\cO\left(\frac1T\right)$$
 in a neighbourhood of $p_o^{-1}\left(\frac12\right)$. By Proposition \ref{p:CV},
 this implies
 \begin{equation}\label{e:comm}\norm{[P, Q_T]u}\leq
C \left(\frac{\hbar}T +\cO(\hbar^{2-2\delta})\right)\norm{u}
+\cO(\hbar)\norm{(P-\frac12)u}.\end{equation}
 
We now want to make a small perturbation $\tilde\cP$ of $\cP$ with a good control over the resolvent $(\tilde\cP(z)-z)^{-1}$, and over the trace class norm $\norm{.}_1$ of $\tilde\cP-\cP$.

We construct a pseudodifferential operator $\tQ_T=\Op_\hbar(\tilde q^T)\in \Psi DO^1_\delta$ such that
 $\tilde q^T\leq q^T$ on $p_o^{-1}\left(\frac12\right)$ and $\left\{p_o, \tilde q^T\right\}=\cO\left(\frac1T\right)$. In addition, we fix some $\eps>0$ and introduce an arbitrarily small $\theta>0$, and we want $q^T=\tilde q^T$ on $p_o^{-1}(]\frac12-\eps, \frac12+\eps[)\cap\left\{q^T\leq \alpha-3\theta\right\}$, and $\tilde q^T\leq\alpha-2\theta$ everywhere on $p_o^{-1}(]\frac12-\eps, \frac12+\eps[)$. For instance we can take $\tilde q^T=a(q^T)$ on $p_o^{-1}(]\frac12-\eps, \frac12+\eps[)$ where $a$ is real and smooth, $a(E)\leq E$, $|a'|\leq 1$; $a(E)=E$ if $E\leq \alpha-3\theta$, and $a\leq \alpha-2\theta$ everywhere.
 
 \begin{rem}At this stage it is convenient to choose a positive quantization scheme $\Op_\hbar$, in order to have $\Op_\hbar(q^T)\geq \Op_\hbar(\tilde q^T)$.
 \end{rem}

Let $0\leq f\in \cS(\IR)$, with $\hat f\in C_0^\infty$, where $\hat f(t)=\int e^{itE}f(E)dE$ is the Fourier transform. Put
$$\tilde\cP=P+i\hbar\hat Q_T+\hbar\tR_T(z),$$
with
$$\hat Q_T=Q_T+f\left(\frac{2P-1}\hbar\right)(\tQ_T-Q_T)f\left(\frac{2P-1}\hbar\right).$$

The following proposition is proved in \cite{Sj00} for fixed $T$ (and $\delta=0$, that is, with standard symbol classes). One can follow the proof of \cite{Sj00} line by line and check that it is still valid for $T=(1-4\eps)|\log\hbar|$, $\eps>0$ very small~:
\begin{prop}\label{p:plagiat} Let $P=-\hbar^2\frac{\lap}2$. Let $Q=Q(z)\in \Psi DO^1$ have principal symbol $q(z)$ depending holomorphically on $z\in\Omega$, and be formally self-adjoint when $z$ is real. Let
$$\cP_T=P+i\hbar Q_T+\hbar \tR_T(z),\qquad Q_T=Q_T\left(\frac12\right), \qquad z=\frac{1+\zeta}2, \qquad
\zeta=\cO(\hbar),$$
be the operator defined in \eqref{e:theope2}, with $Q_T=\Op_\hbar(q^T)\in \Psi DO_\delta^1$, and $\tR_T(z)\in \hbar^{1-2\delta}\Psi DO^0_\delta
+(z-\frac12)\Psi DO_\delta^1$. Let $\tQ_T=\Op_\hbar(\tilde q^T)\in \Psi DO^1_\delta$, with $\tilde q^T=a(q^T)$ on $p_o^{-1}(]\frac12-\eps, \frac12+\eps[)$,
where $a$ is real and smooth, $a(E)\leq E$, $|a'|\leq 1$; $a(E)=E$ if $E\leq \alpha-3\theta$, and $a\leq \alpha-2\theta$.

 Put
$$\tilde\cP_T=P+i\hbar\hat Q_T+\hbar \tR_T(z),$$
with
$$\hat Q_T=Q_T+f\left(\frac{2P-1}\hbar\right)(\tQ_T-Q_T)f\left(\frac{2P-1}\hbar\right).$$

Then
$$\norm{\tilde\cP_T-\cP_T}\leq \hbar\left( \norm{f}_\infty^2\sup_{p_o^{-1}\left(\frac12\right)}(q^T-\tilde q^T)+\cO(\hbar^{1-2\delta})\right)$$
and
\begin{multline*}\norm{\tilde\cP_T-\cP_T}_1
\leq C_d\hbar^{2-d}\Big[\hat{f^2}(0)\int_{p_o^{-1}\left(\frac12\right)}(q^T-\tilde q^T)L_{\frac12}(d\rho)\\
+\sum_{k=1}^{N-1}\hbar^k \abs{D^{2k}_t\hat{f^2}(0)}\int_{p_o^{-1}\left(\frac12\right)}\abs{D^{2k}_\rho(q^T-\tilde q^T)}L_{\frac12}(d\rho)+\cO(\hbar^{N(1-2\delta)})\Big],
\end{multline*}
where $D^{2k}_t$ and $D^{2k}_\rho$ are differential operators of degree $\leq 2k$, respectively on
$\IR$ and $T^*M$.

If we restrict $z$ by assuming that for some continuous function $\alpha(E)>0$, defined on some bounded interval $J$ containing $0$, we have
$$\frac{\Im m(\zeta)}{2\hbar}-q^T+f\left(\frac{\Re e(\zeta)}\hbar\right)^2(q^T-\tilde q^T)\geq \alpha\left(\frac{\Re e(\zeta)}\hbar\right), $$ on $p_o^{-1}\left(\frac12\right), \frac{\Re e(\zeta)}\hbar\in J$,\\
then for  $\hbar$ small enough, $(z-\tilde \cP_T)^{-1}$ exists, and we have
$$\norm{\left(\frac1\hbar(z-\tilde \cP_T)\right)^{-1}}\leq
\frac{2+12\sup_{p_o^{-1}\left(\frac12\right)}(q^T-\tilde q^T)\norm{f'}_\infty\norm{f}_\infty}{\alpha\left(\frac{\Re e(\zeta)}\hbar\right)}.$$

\end{prop}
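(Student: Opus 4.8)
The plan is to follow Sj\"ostrand's argument in \cite{Sj00}, \S 2, checking that nothing is lost when $T$ is allowed to grow like $(1-4\eps)|\log\hbar|$, i.e.\ when the symbols live in the mildly exotic classes $S^1_\delta$ with $\delta=\frac{1-\eps}2$. First I would establish the two norm estimates. For the operator norm of $\tilde\cP_T-\cP_T$, write
$$\tilde\cP_T-\cP_T=i\hbar\,f\!\left(\tfrac{2P-1}\hbar\right)(\tQ_T-Q_T)f\!\left(\tfrac{2P-1}\hbar\right),$$
and bound each factor: $f(\tfrac{2P-1}\hbar)$ is bounded by $\norm f_\infty$ by the spectral theorem, while $\tQ_T-Q_T=\Op_\hbar(\tilde q^T-q^T)+\cO(\hbar^{1-2\delta})$ in operator norm by the (exotic) Calder\'on--Vaillancourt / symbolic calculus estimate, so the principal part contributes $\sup_{p_o^{-1}(1/2)}(q^T-\tilde q^T)$ and the remainder $\cO(\hbar^{1-2\delta})$. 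The key point to check here is that $\tilde q^T-q^T$ is supported (microlocally) where $p_o$ is near $\tfrac12$, so that the cutoffs $f(\tfrac{2P-1}\hbar)$ do not spoil the bound, and that all symbol seminorms that appear are controlled by negative powers of $\hbar$ compatible with $\delta<\tfrac12$.

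Next, for the trace-norm estimate I would compute $\Tr(\tilde\cP_T-\cP_T)$ by writing it as $i\hbar\Tr\big(f(\tfrac{2P-1}\hbar)^2(\tQ_T-Q_T)\big)$ (cyclicity of the trace), and then apply the standard semiclassical trace expansion for $g(\tfrac{2P-1}\hbar)\Op_\hbar(b)$ with $g=f^2$. This produces $\frac{C_d}{(2\pi\hbar)^d}$ times $\int \widehat{f^2}(0)\,b + \sum_{k\ge1}\hbar^k$ (derivatives of $\widehat{f^2}$ at $0$) $\int D^{2k}_\rho b$, plus a remainder which, because the symbols are in $S^0_\delta$, is $\cO(\hbar^{N(1-2\delta)})$ after $N$ steps; here $b=\tilde q^T-q^T\le0$, which is why the leading term is the stated integral of $q^T-\tilde q^T$ against $L_{1/2}$. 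The subtlety is bookkeeping: one must track how the derivatives $D^{2k}_\rho(q^T-\tilde q^T)$ grow with $T$ (they are $\cO(\hbar^{-2k\delta})$ via the control $\{p_o,\tilde q^T\}=\cO(1/T)$ and the $S^1_\delta$ membership), ensuring the series is genuinely in decreasing powers of $\hbar$ and that the final error beats $\hbar^{2-d}$.

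For the resolvent bound, I would use the elementary numerical-range argument of \cite{Sj00}: conjugating by $f(\tfrac{2P-1}\hbar)$-type cutoffs, one shows that on the relevant spectral window $\Re e\langle (\tilde\cP_T-z)u,u\rangle$ or rather $\Im m$ of it is bounded below. Concretely, $\tfrac1\hbar\Im m(z-\tilde\cP_T)$ has symbol $\tfrac{\Im m(\zeta)}{2\hbar}-q^T+f(\tfrac{\Re e\zeta}\hbar)^2(q^T-\tilde q^T)$ up to $\cO(\hbar^{1-2\delta})$ lower-order terms, which by hypothesis is $\ge\alpha(\tfrac{\Re e\zeta}\hbar)>0$ on $p_o^{-1}(1/2)$; away from that layer the ellipticity of $P-\tfrac12$ takes over. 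A sharp G\aa rding inequality (valid in the class $S^0_\delta$, $\delta<\tfrac12$) then gives $\norm{\tfrac1\hbar(z-\tilde\cP_T)u}\ge c\,\alpha\norm u$ for $\hbar$ small, and inverting yields the stated bound with the explicit constant $2+12\sup(q^T-\tilde q^T)\norm{f'}_\infty\norm f_\infty$, where the correction term comes from commuting $P$ past $f(\tfrac{2P-1}\hbar)$ (a commutator of size $\cO(\hbar)\norm{f'}_\infty$ by \eqref{e:comm} and the functional calculus).

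The main obstacle I expect is not any single estimate but the uniformity in $T=(1-4\eps)|\log\hbar|$: one has to verify that every remainder term in Sj\"ostrand's fixed-$T$ proof, when re-examined with symbols in $S^m_\delta$ and with $T$-dependent seminorms, still decays, i.e.\ that the accumulated loss of powers of $\hbar$ (from each of the $\sim|\log\hbar|$ iterations or from the $\hbar^{-2\delta}$ factors) is absorbed by the constraint $2\delta=1-\eps<1$. This is exactly the point where the choice $T=(1-4\eps)|\log\hbar|$ (rather than $(1-2\eps)|\log\hbar|$) matters, leaving a safety margin so that products like $\hbar^{k(1-2\delta)}=\hbar^{k\eps}$ and the Ehrenfest-type bounds on $\langle q\rangle_T$ and its derivatives remain summable. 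Once this uniformity is in hand, the proof is a verbatim transcription of \cite{Sj00}, \S 2, which is why I would simply state it as such rather than reproduce it in full.
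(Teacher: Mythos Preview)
Your outline for the operator norm is essentially the paper's argument (Calder\'on--Vaillancourt localized near $p_o^{-1}(\tfrac12)$, using $\norm{(2P-1)f(\tfrac{2P-1}\hbar)}=\cO(\hbar)$ to kill the off-energy term). The resolvent bound is also the right shape: the paper likewise expands $\Im m\langle \tfrac1\hbar(z-\tilde\cP_T)u,u\rangle$, applies G\aa rding in $S^0_\delta$, and uses the auxiliary bound $\sqrt3\,\norm{(\tilde\cP_T-z)u}\ge\norm{(P-\Re e z)u}-o(\hbar)\norm u$ obtained from $\norm{(A+iB)u}^2=\norm{Au}^2+\norm{Bu}^2+i\langle u,[A,B]u\rangle$ together with \eqref{e:comm}.

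There is however a genuine gap in your trace-norm paragraph. You write ``compute $\Tr(\tilde\cP_T-\cP_T)$'' and then apply the semiclassical trace expansion; but the quantity to control is the trace \emph{norm}, and for a non-sign-definite operator the trace tells you nothing about $\norm{\cdot}_1$. The paper singles this out as the one place where the argument is \emph{not} a straight transcription of \cite{Sj00}: ``For the trace class norm, we need to be even more careful than in \cite{Sj00}.'' The fix is to choose a \emph{positive} quantization (as flagged in the Remark before Proposition~\ref{p:plagiat}), so that $q^T\ge\tilde q^T$ forces $Q_T-\tQ_T\ge0$ as an operator, hence $f(\tfrac{2P-1}\hbar)(Q_T-\tQ_T)f(\tfrac{2P-1}\hbar)\ge0$, and only then does $\norm{\cdot}_1=\Tr(\cdot)$. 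In Sj\"ostrand's fixed-$T$ setting one could get by with G\aa rding (positivity up to an $\cO(\hbar)$ error), but here that error would be $\cO(\hbar^{1-2\delta})$ times a Weyl volume $\hbar^{1-d}$, which is not obviously dominated by the main term; the positive quantization bypasses this entirely. After that, the expansion you describe---write $f^2(\tfrac{2P-1}\hbar)=\tfrac1{2\pi}\int\widehat{f^2}(t)e^{it(2P-1)/\hbar}dt$, use the FIO parametrix and stationary phase---is exactly what the paper does.

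A second, smaller correction: the factor $\norm{f'}_\infty\norm f_\infty$ does \emph{not} come from commuting $P$ past $f(\tfrac{2P-1}\hbar)$ (those commute identically). It comes from Sj\"ostrand's trick (his (3.19)): one replaces the operator $f(\tfrac{2P-1}\hbar)^2$ by the scalar $f(\tfrac{\Re e\zeta}\hbar)^2$ inside the quadratic form, and the discrepancy is bounded by $2\sup(q^T-\tilde q^T)\norm f_\infty\norm{f'}_\infty\cdot\norm u\cdot\norm{\tfrac{P-\Re e z}\hbar u}$ via the mean-value inequality for $f$. That last factor is then absorbed using the real-part estimate~\eqref{e:real}, which is where the numerical constant $2+12(\cdots)$ ultimately comes from.
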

The proof is identical to the proof in \cite{Sj00}. In Appendix \ref{a:plagiat} we will give some details, for the reader's convenience.

\begin{coro} Define 
$$\tilde\Omega_\hbar=\left\{\frac12-2c\hbar\leq \Re e(z)\leq\frac12 +2c\hbar\right\}\cap\left\{
(\alpha-\theta)\hbar\leq \Im m(z)\leq 4\norm{q}_\infty\hbar\right\}\subset\IC.$$ For $z\in\tilde\Omega_\hbar$, the operator $z-\tilde \cP_T$ is invertible, and
$$\norm{(\tilde \cP_T-z)^{-1}}\leq \frac{C_{f, q}}{\theta\hbar}.$$
\end{coro}

\begin{coro}\label{c:maincoro}For $\hbar$ small enough, we have
\begin{eqnarray*}\norm{\tilde\cP_T-\cP_T}_1&\leq &C_{d, f, q}  \hbar^{2-d} L_{\frac12}\left(\left\{\tilde q^T\not=q^T\right\}\cap \left\{p_o^{-1}\left(\frac12\right)\right\}\right)\\
&\leq & C_{d, f, q}  \hbar^{2-d} L_{\frac12}\left(\left\{ q^T\geq \alpha -3\theta\right\}\cap \left\{p_o^{-1}\left(\frac12\right)\right\}\right)\\
&\leq &  C_{d, f, q}  \hbar^{2-d} e^{T[H(\alpha-3\theta)-(d-1)+\eps]}\\
&\leq  & C_{d, f, q} \hbar^{2-d} \hbar^{ [(d-1)-H(\alpha-3\theta)-\eps](1-4\eps)}
\end{eqnarray*}
for $\hbar$ small enough.
\end{coro}

\section{Jensen's inequality\label{s:Jensen}}
 We already defined $$\tilde\Omega_\hbar=\left\{\frac12-2c\hbar\leq \Re e(z)\leq\frac12 +2c\hbar\right\}\cap\left\{
(\alpha-\theta)\hbar\leq \Im m(z)\leq 4\norm{q}_\infty\hbar\right\}\subset\IC.$$
To finish the proof of Theorem \ref{t:upperWeyl}, we also introduce the set $$\Omega_\hbar=
\left\{\frac12-c\hbar\leq \Re e(z)\leq\frac12 +c\hbar\right\}\cap \left\{
\alpha\hbar\leq \Im m(z)\leq 3\norm{q}_\infty\hbar\right\}\subset\tilde\Omega_\hbar.$$
For $z\in\tilde\Omega_\hbar$, we can write
$$ \cP_T-z=(\tilde \cP_T-z)(1+K(z))$$
where $K(z)$ is the trace class operator $(\tilde \cP_T-z)^{-1}(\cP_T-\tilde \cP_T)$.

We can bound the number of eigenvalues of $\cP_T$ in $\Omega_\hbar$ by the number
of zeros of the holomorphic function $g(z)=\det(1+K(z))$ in $\Omega_\hbar$. Let us call $N(g, \Omega_\hbar)$ this number of zeros.
Introduce $z_0=\frac12+2i\hbar \norm{q}_\infty$.
By the Jensen inequality \cite{Rud}, 
\begin{equation}N(g, \Omega_\hbar)\leq C \left(\log\norm{g}_{\infty, \tilde\Omega_\hbar}-\log|g(z_0)|\right),\label{e:jensen}\end{equation}
where the constant $C$ does not depend on $\hbar$ (because the rectangles $\tilde\Omega_\hbar$ and $\Omega_\hbar$ can be transported, by translations and homotheties, to the fixed rectangles
$\tilde\Omega_1$ and $\Omega_1$).

On the one hand, for all $z\in \tilde\Omega_\hbar$,
\begin{eqnarray*}
|\det(1+K(z))|&\leq& \exp\norm{K(z)}_1
\\ &\leq& \exp\left(  \norm{\tilde \cP_T-z)^{-1}}\norm{\cP_T-\tilde \cP_T}_1     \right)\\
&\leq&  \exp\left( \frac{C_{d,f, q}}{\theta\hbar }  \hbar^{2-d} \hbar^{ [(d-1)-H(\alpha-3\theta)-\eps](1-4\eps)}
\right)\\
&\leq&   \exp\left(  C_{f, q, \theta, d} \hbar^{1-d}\hbar^{ [(d-1)-H(\alpha-3\theta)-\eps](1-4\eps)}
\right).
\end{eqnarray*}

On the other hand, we also know that $ \norm{(1+K(z_0))^{-1}}\leq C\hbar^{-1}$~: since $z_0$ has `large'
imaginary part,  $\cP_T-z_0$ is invertible, and it is easy to get a bound $\norm{(\cP_T-z_0)^{-1}}=\cO(\hbar^{-1})$. We use the same calculation as in \cite{Sj00} and get
\begin{eqnarray*}
|\det(1+K(z_0))^{-1}|&=& |\det (1-K(z_0)(1+K(z_0))^{-1})|
\\ &\leq& \exp\norm{K(z_0)(1+K(z_0))^{-1}}_1
\\ &\leq&\exp \norm{K(z_0)}_1 \norm{(1+K(z_0))^{-1}}
\\&\leq&   \exp\left( \tilde C_{f, q, \theta, d} \hbar^{1-d}\hbar^{ [(d-1)-H(\alpha-3\theta)-\eps](1-4\eps)}\right)
\end{eqnarray*}
so that
$$|\det(1+K(z_0)|\geq  \exp\left( -\tilde C_{f, q, \theta, d} \hbar^{1-d}\hbar^{ [(d-1)-H(\alpha-3\theta)-\eps](1-4\eps)}\right).$$
This, combined to \eqref{e:jensen}, yields
$$\abs{N(g, \Omega_\hbar)}\leq C \hbar^{1-d}\hbar^{ [(d-1)-H(\alpha-2\theta)-\eps](1-4\eps)}$$
Since $\theta$ and $\eps>0$ are arbitrary, we have proved Theorem \ref{t:upperWeyl}.

\begin{rem}\label{r:change2} Starting from Remark \ref{r:change}, the proof of Theorem \ref{t:vari} goes exactly along the same lines. We find
$$\limsup_{\hbar\To 0} \frac{\log\sharp\left\{ z \in \Sigma_{\frac12}, \frac{\Im m(z)}\hbar\geq \alpha\right\} }{\abs{\log\hbar}} \leq \tilde H(\alpha),$$
where $\tilde H(\alpha)=(d-1) \sup\left\{\frac{h_{KS}(\mu)}{\int \phi \,d\mu}- \frac{\int \varphi \,d\mu}{\int \phi \,d\mu} +1, \mu\in\cM_{\frac12}, \int q\,d\mu =\alpha\right\}$ and $\phi$ is as in Remark \ref{r:change}. Letting $\phi$ converge to $\varphi$ uniformly, we obtain Theorem \ref{t:vari}.
\end{rem}

\section{About Question 3\label{s:q3}}
In this section, we consider a particular case of the problem \eqref{e:spec} in which the trace formula is exact.
We then try to investigate Question 3 on this example.

Let $M$ be a compact hyperbolic surface~: $M$ can be written as $M=\Gamma\backslash \IH$,
where $\IH$ is the hyperbolic disc and $\Gamma$ is a discrete subgroup of the group of hyperbolic isometries.
Let $[\omega]\in H^1(M, \IC)$ be represented by the harmonic complex valued $1$--form $\omega$. Introduce the twisted laplacian
$$\lap_\omega f=\lap f-2\la \omega, df\ra+\norm{\omega}_x^2f.$$
Studying the large eigenvalues of $\lap_\omega$ amounts to studying a fixed spectral window for the semiclassical twisted laplacian
$$-\hbar^2\frac{\lap_\omega}2=-\hbar^2\frac{\lap}2+\hbar^2 \la \omega, d.\ra-\hbar^2\frac{\norm{\omega}_x^2}2, \qquad \hbar\To 0.$$
The ``usual'' selfadjoint case is when $\omega$ has coefficient in $i\IR$. We shall instead be interested
in the case when $\omega$ has coefficients in $\IR$.
The operator falls exactly into the case studied in \S \ref{p:semiclass}, with $q(x, \xi)=\la \omega_x, \xi\ra$.
The geodesic flow is ergodic, and Sj\"ostrand's result tells us that ``most'' eigenvalues of $-\hbar^2\frac{\lap_\omega}2$ such that $\Re e(z)\in [\frac12-C\hbar, \frac12+C\hbar]$ have imaginary part
$\Im m(z)=o(\hbar)$. Equivalently, ``most'' eigenvalues of $-\lap_\omega$ such that $\Re e(z)\in
[\lambda-C\sqrt{\lambda}, \lambda+C\sqrt{\lambda}]$ have imaginary part $\Im m(z)=o(\sqrt\lambda)$.

We rephrase Question 3 as\\
{\bf (Q3')} If $\omega\not=0$, is it possible to have $\frac{\Im m(z)}\hbar\To 0$ as $\hbar\To 0$ and $\Re e(z)\in [\frac12-C\hbar, \frac12+C\hbar]$, $z\in Sp(-\hbar^2\frac{\lap_\omega}2)$ ?
\vspace{.4cm}
\\
{\bf Conjecture~:} I conjecture the opposite~:
if $\omega\not=0$,  then there is a sequence $\hbar_n\To 0$,  $z_n\in Sp(-\hbar_n^2\frac{\lap_\omega}2)$ with $\Re e(z_n)\in [\frac12-C\hbar_n, \frac12+C\hbar_n]$ and $\frac{\Im m(z_n)}{\hbar_n}\not\To 0$. 

\vspace{.4cm}
As is usual in hyperbolic spectral theory, we introduce the spectral parameter $r$~: if $\lambda_j$ is an eigenvalue of $-\lap_\omega$, we denote $\lambda_j=\frac14+r_j^2$. Yet another way to phrase Question 3 is to ask whether it is possible to have $\Im m(r_j)\To 0$ as $\Re e(r_j)\To \infty$.
Sj\"ostrand's results say that $\Im m(r_j)$ is bounded and that $\Im m(r_j)\To 0$ for a subsequence of density one. But I naturally believe that it is impossible to have $\Im m(r_j)\To 0$ for the whole sequence, 
unless $\omega=0$.

Recall the Selberg trace formula \cite{Sel56}, valid for $\omega\in H^1(M, i\IR)$~:
\begin{equation}
\sum_{\lambda_j} \hat f(r_j)=\frac{Area(M)}{4\pi}\int_{-\infty}^{+\infty}\hat f(r)r\tanh(\pi r) dr+\sum_{\gamma}\frac{e^{\int_\gamma \omega}l_{\gamma_o}}{\sinh\frac{l_\gamma}2}f(l_\gamma),
\end{equation}
for any function $f$ on $\IR$, even, smooth enough, and decaying faster than any exponential.
On the right hand side, the sum runs over the set of closed geodesics (equivalently, the set of conjugacy classes in $\Gamma$). If $\gamma$ is a periodic geodesic, we denote $l_\gamma>0$ its
length, or period; and $l_{\gamma_o}$ is its shortest period.

\begin{prop} The trace formula holds, under the same assumptions on $f$,
if $\omega\in H^1(M, \IR)$.
\end{prop}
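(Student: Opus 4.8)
The statement to prove is that the Selberg trace formula continues to hold when $\omega$ is a real-valued harmonic $1$-form rather than an $i\IR$-valued one. The plan is to deduce this from the known case by analytic continuation in a complex parameter. Write $\omega_s = s\omega$ for $s\in\IC$, so that for $s\in i\IR$ we are in the classical self-adjoint situation where the trace formula \eqref{e:Sel56} (the displayed identity just above the proposition) is established, and for $s\in\IR$ we are in the case we want. Both sides of the trace formula should be viewed as functions of $s$, and the goal is to show each side is holomorphic in $s$ on a neighbourhood of the real axis (indeed on all of $\IC$), so that an identity valid on the imaginary axis propagates to the whole plane.

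First I would treat the geometric side. For fixed admissible $f$ (even, smooth, faster-than-exponential decay, with $\hat f\in C_0^\infty$ so the $l_\gamma$-sum is in fact finite, or at worst absolutely convergent since $\sum_\gamma l_{\gamma_o} e^{-l_\gamma/2}|f(l_\gamma)|<\infty$ by the prime geodesic theorem), the term attached to a closed geodesic $\gamma$ is $\frac{e^{s\int_\gamma\omega}\, l_{\gamma_o}}{\sinh(l_\gamma/2)}f(l_\gamma)$, which is entire in $s$, and the bound $|e^{s\int_\gamma\omega}|\le e^{|s|\,C\, l_\gamma}$ together with $\hat f$ compactly supported (hence only finitely many $\gamma$ contribute, or the sum converges locally uniformly in $s$) shows the geometric side is holomorphic in $s\in\IC$. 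The identity (Weyl/hyperbolic) term $\frac{\mathrm{Area}(M)}{4\pi}\int \hat f(r) r\tanh(\pi r)\,dr$ does not depend on $s$ at all, so it is trivially entire.

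Next the spectral side $\sum_j \hat f(r_j)$, where $\lambda_j = \frac14 + r_j^2$ are the eigenvalues of $-\lap_{\omega_s}$. Here is the main obstacle: for $s\notin i\IR$ the operator $\lap_{\omega_s}$ is no longer self-adjoint, so one must first know that it still has discrete spectrum (this follows from the general discussion in Section \ref{s:spec}, or more simply because $\lap_{\omega_s}$ differs from the self-adjoint Laplacian by a relatively compact perturbation, so it has compact resolvent), and then one must control the eigenvalues $r_j(s)$ and the convergence of $\sum_j\hat f(r_j(s))$ uniformly for $s$ in compact sets in order to get holomorphy. The cleanest route is to pass through the resolvent: for a suitable test function $f$ one has $\sum_j \hat f(r_j) = \Tr\, h(\lap_{\omega_s})$ for an appropriate $h$ (or more robustly, one expresses the spectral side via a contour integral of the resolvent $(\lap_{\omega_s}-\lambda)^{-1}$ against a holomorphic symbol), and then one invokes analytic perturbation theory (Kato): $s\mapsto \lap_{\omega_s}$ is an entire family of operators of type (A), its resolvent is jointly holomorphic in $(s,\lambda)$ off the spectrum, the spectrum stays in a fixed horizontal strip with the real parts of $r_j$ tending to infinity at a rate uniform in $s$ on compacts (by the Weyl law \eqref{e:Weyl}, which is stable under such perturbations), and hence the regularized trace $\sum_j\hat f(r_j(s))$ converges locally uniformly and defines a holomorphic function of $s\in\IC$.

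Finally, with both sides holomorphic on $\IC$ and equal on the line $i\IR$ (which has an accumulation point, indeed is a one-real-dimensional set on which we may restrict to, say, $s\in i\IR$ and note the two sides agree, then use that a holomorphic function vanishing on a set with a limit point vanishes identically — here applied to the difference of the two sides, which is holomorphic on $\IC$ and vanishes on the non-discrete set $i\IR$), the identity holds for all $s\in\IC$, in particular for $s=1$, i.e.\ for $\omega\in H^1(M,\IR)$. I expect the genuinely delicate point to be the uniform control of the spectral side under the non-self-adjoint perturbation — establishing the locally uniform convergence needed for holomorphy — whereas the geometric side and the analytic-continuation conclusion are routine; one should also double-check that the class of admissible $f$ is not shrunk in the process (it is not, since the geometric side converges for the same $f$ regardless of $s$, being dominated by the prime geodesic counting).
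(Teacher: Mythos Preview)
Your proposal is correct and follows essentially the same route as the paper: analytic continuation in a complex parameter $s$ (the paper writes $z$) multiplying $\omega$, with the geometric side trivially entire and the spectral side shown to be a locally uniform limit of holomorphic partial sums once one has (i) a uniform bound on $\Im m(r_j)$ and (ii) a uniform Weyl-type count. The only notable difference is that where you invoke Kato's analytic perturbation theory abstractly, the paper obtains the horizontal-strip bound on the $r_j$ by a direct elementary computation---pairing the eigenvalue equation with the eigenfunction and separating real and imaginary parts---and then cites the locally-uniform-in-$z$ Weyl estimate from \cite{Sj00} (or the paper's own Sections \ref{s:ave}--\ref{s:Jensen}) for the counting; otherwise the two arguments match.
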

Recall that the Fourier transform is defined by $\hat f(r)=\int e^{iru}f(u)du.$

\begin{proof}
 
Take $\omega\in H^1(M, \IR)$.
We consider the operator $\lap_{z\omega}$ for $z\in \IC$. The argument of Section \ref{s:spec}
shows that this operator has discrete spectrum (eigenvalues).
 
The right hand side of the trace formula reads
\begin{equation}\label{e:tracez}\frac{Area(M)}{4\pi}\int_{-\infty}^{+\infty}\hat f(r)r\tanh(\pi r) dr+\sum_{\gamma}\frac{e^{z\int_\gamma \omega}l_{\gamma_o}}{\sinh\frac{l_\gamma}2}f(l_\gamma)\end{equation}
and clearly is an entire function of $z$.

The left hand side is 
$\sum_j \hat f(r_j(z))$. To check that it is an entire function of $z$, we first note that $\hat f(r)$, being
an even entire function, can be written as $g(\frac14+r^2)$ where $g$ is entire. Thus
$\hat f(r_j(z))=g(\lambda_j(z))$, where the $\lambda_j(z)$ are the eigenvalues of $-\lap_{z\omega}$.

If $z$ is restricted to a bounded subset $\Omega$ of $\IC$, 
we note that the $\Im m (r_j(z))$ are uniformly bounded. To that end, we write $-\lambda_j=\frac14+r_j^2$ with $r_n=x+iy$. The eigenvalue equation
$$-\lap_{z\omega}f=\lambda_j f$$
with $f$ normalized in $L^2$
implies both equations
$$\frac14+x^2-y^2=\int |\nabla f|^2+2\beta i\int\la\omega, df\ra\bar f+(\beta^2-\alpha^2)\int 
\norm{\omega}_x^2\abs{f}^2$$
and
$$2xy=-2\alpha i\int\la\omega, df\ra\bar f-2\alpha\beta\int 
\norm{\omega}_x^2\abs{f}^2$$
if we decompose $z=\alpha+i\beta\in \IR+i\IR$ and $r_j=x+iy\in \IR+i\IR$. If $\alpha$ and $\beta$ stay bounded, it also follows that $y$ must stay bounded.

Besides
$$\lambda^{-2}\sharp\left\{n , 0\leq \Re e(r_n(z))<\lambda\right\} $$
is bounded uniformly for $\lambda>1$ and $z$ staying in a compact set of $\IC$ (the arguments of \cite{Sj00}, \S 4, or of our Sections \ref{s:ave}, \ref{s:pert}, \ref{s:Jensen} are locally uniform in $z$). Since $\hat f$ is rapidly decreasing in each horizontal strip, it follows that
the sum 
$\sum_j \hat f(r_j(z))$ is the uniform limit of the partial sums
$\sum_{|\Re e(r_j(z))|<\lambda}\hat f(r_j(z))$. But for a given $\lambda$, this is a holomorphic function of $z$ (in the open set $\{z,\Re e(r_j(z))\not=\lambda \mbox{ for all }j\}$), since $\lap_{z\omega}$ depends holomorphically on $z$ and $\sum_{|\Re e(r_j(z))|<\lambda}\hat f(r_j(z))$ can be defined by holomorphic functional calculus.

This shows that $\sum_j \hat f(r_j(z))$ is also an entire function. Both sides of \eqref{e:tracez}
coincide for $z\in i\IR$ (by the usual trace formula), thus they must coincide everywhere.
\end{proof}

Introduce some parameters $\sigma, R, T>0$, and take
$$f(u)=\frac{1}{\sqrt{2\pi}\sigma}\left[e^{-\frac{(u-T)^2}{2\sigma^2}}e^{iuR}+e^{-\frac{(u+T)^2}{2\sigma^2}}e^{-iuR}\right]$$
so that
$$\hat f(r)=e^{-\frac{\sigma^2}2(r-R)^2}e^{-iTr}+e^{-\frac{\sigma^2}2(r+R)^2}e^{iTr}.$$
This yields~:
\begin{multline}\label{e:tracegauss}
\sum_j e^{-\frac{\sigma^2}2(r_j-R)^2}e^{-iTr_j}+e^{-\frac{\sigma^2}2(r_j+R)^2}e^{iTr_j}\\
=\frac{Area(M)}{4\pi}\int_{-\infty}^{+\infty}r\tanh\pi r \left[e^{-\frac{\sigma^2}2(r-R)^2}e^{-iTr}+e^{-\frac{\sigma^2}2(r+R)^2}e^{iTr}\right]dr\\ +
\sum_\gamma \frac{e^{\int_\gamma \omega}l_{\gamma_o}}{\sinh\frac{l_\gamma}2}
\frac{1}{\sqrt{2\pi}\sigma}\left[e^{-\frac{(l_\gamma-T)^2}{2\sigma^2}}e^{il_\gamma R}+e^{-\frac{(l_\gamma +T)^2}{2\sigma^2}}e^{-il_\gamma R}\right]
\end{multline}

 We want to bound from below the right hand side. We hope that this bound will tell us that $\exp(\pm iTr_j)$ cannot be too small on the left hand side, giving some information
 on the imaginary part of $r_j$, for $\Re e(r_j)\sim R$. On the right,
 the idea is that
 the main contribution should come from the geodesics with $l_\gamma \sim T$. We want to choose $R$
 so as not to be bothered by the oscillatory terms $e^{\pm il_\gamma R}$.
 For that purpose,
 $R$ and $T$ will be related in the following manner~:
\begin{lem}\cite{PR94}, Lemma 3.3, \cite{JP07}. \label{l:JPTtrick}

For any $M>1$, there exists $R\in \left[M, M\exp(\exp( 5T))\right]$ such that $\cos(Rl_\gamma)\geq \frac12$ for every closed geodesic $\gamma$ with $l_\gamma \leq 5T$.
\end{lem}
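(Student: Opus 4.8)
The statement is a pigeonhole argument on the circle, exploiting the fact that on a compact hyperbolic surface the number of closed geodesics of length at most $L$ grows only exponentially in $L$ (the prime geodesic theorem, or just the crude bound $\sharp\{\gamma : l_\gamma \le L\} \le C e^{L}$, which follows from \eqref{e:Weyl2}-type counting or directly from the Selberg zeta function). The plan is as follows. Fix $T$ and let $\gamma_1,\dots,\gamma_N$ enumerate the closed geodesics with $l_\gamma \le 5T$; by the exponential bound, $N \le C e^{5T}$, so certainly $N \le \tfrac{1}{10}\exp(\exp(5T))$ for all $T$ above an absolute constant (and the small remaining range of $T$ is harmless, as any single $R$ works there too after adjusting constants). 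For each $j$, the ``bad set'' $B_j = \{R > 0 : \cos(R\, l_{\gamma_j}) < \tfrac12\}$ consists of the union of intervals around the points $R$ where $R\,l_{\gamma_j} \in (\tfrac{\pi}{3} + 2\pi k,\; \tfrac{5\pi}{3} + 2\pi k)$; within any window of length $\tfrac{2\pi}{l_{\gamma_j}}$ the bad portion has relative measure $\tfrac{2}{3}$ and the good portion relative measure $\tfrac13$.

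First I would make this a genuine measure-theoretic pigeonhole on the interval $[M, 2M]$ (say). The good set for a single $\gamma_j$ inside $[M,2M]$ has Lebesgue measure at least $\tfrac13 M - \tfrac{2\pi}{l_{\gamma_j}} \ge \tfrac13 M - C'$, which is $\ge \tfrac14 M$ once $M$ is large; but we want a single $R$ good for \emph{all} $j$ simultaneously, and intersecting $N$ sets each of density $\tfrac13$ need not be nonempty. So the honest route is the iterated-interval construction actually used in \cite{PR94}: reorder the geodesics so that $l_{\gamma_1} \le \dots \le l_{\gamma_N}$, and build a nested sequence of intervals $I_0 \supset I_1 \supset \dots \supset I_N$ where $I_0 = [M, M\exp(\exp(5T))]$ and, given $I_{j-1}$, one chooses $I_j \subset I_{j-1}$ to be a subinterval on which $\cos(R\, l_{\gamma_j}) \ge \tfrac12$ throughout. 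This is possible provided $I_{j-1}$ is long enough to contain a full good arc of the function $R \mapsto \cos(R\,l_{\gamma_j})$, i.e. provided $|I_{j-1}| \ge \tfrac{2\pi}{l_{\gamma_j}}$ times an absolute constant; and one can arrange $|I_j| \ge c\,|I_{j-1}| / l_{\gamma_j} \cdot (\text{something})$ — more simply, $|I_j| \ge \tfrac13 |I_{j-1}| \cdot \min(1, \tfrac{2\pi}{l_{\gamma_j}|I_{j-1}|})$, so at worst $|I_j| \ge \tfrac{c}{l_{\gamma_j}} \ge \tfrac{c}{5T}$. Then I need $|I_N| > 0$, i.e. the shrinking must not exhaust the initial length: since each step multiplies the length by a factor no worse than $c/(5T)$ when the interval is already short, or loses only a constant factor $\tfrac13$ when it is long, and since $N \le C e^{5T}$, the total loss is at most $(5T/c)^{\,Ce^{5T}}$, which is comfortably less than $\exp(\exp(5T))$ for $T$ large. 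This forces $I_N \neq \emptyset$, and any $R \in I_N$ works.

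The cleanest way to organize the bookkeeping is: at each stage, if $|I_{j-1}| \ge \tfrac{4\pi}{l_{\gamma_j}}$ pick the good arc and keep $|I_j| \ge \tfrac13 |I_{j-1}|$; otherwise $|I_{j-1}| < \tfrac{4\pi}{l_{\gamma_j}} \le \tfrac{4\pi}{l_{\gamma_1}}$ is already bounded by an absolute constant, and then I pick a good arc of absolute length $\ge c_0$, so $|I_j| \ge c_0$. In the first regime the length can drop by a factor $\tfrac13$ at most $N$ times, costing a factor $3^{-N} \ge 3^{-Ce^{5T}}$; in the second regime it never drops below $c_0$. So overall $|I_N| \ge \min(c_0,\; |I_0| \cdot 3^{-Ce^{5T}}) = \min(c_0,\; M\exp(\exp(5T)) \cdot 3^{-Ce^{5T}}) > 0$ for $T$ large, and the lemma follows; for $T$ in a bounded range one simply notes that there are finitely many geodesics and the good set is a nonempty open set in $[M,\infty)$, so a valid $R$ exists in $[M, M\exp(\exp(5T))]$ trivially.

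The main obstacle — and the reason the doubly-exponential bound $M\exp(\exp(5T))$ appears rather than something polynomial in $T$ — is precisely this: the geodesics number exponentially many in $T$, and each one, being effectively arbitrary (incommensurable lengths), can force the admissible window to shrink by a constant factor, so one pays $\exp(\text{const} \cdot e^{5T})$, which is why the outer exponential is needed to have room to start with. Everything else (the prime geodesic / exponential counting bound, the elementary geometry of the arcs where $\cos \ge \tfrac12$) is routine.
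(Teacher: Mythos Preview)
The paper does not prove this lemma; it is quoted from \cite{PR94} and \cite{JP07}, so there is no in-paper argument to compare against. Your overall plan---bound the number of closed geodesics by $N\le Ce^{5T}$ via the prime geodesic theorem and then run a pigeonhole argument, paying a factor exponential in $N$ and hence doubly exponential in $T$---is exactly the right idea and is the reason the bound $\exp(\exp(5T))$ appears.

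However, your nested-interval construction has a genuine gap. In the ``cleanest bookkeeping'' you assert that when $|I_{j-1}|\ge 4\pi/l_{\gamma_j}$ one can ``pick the good arc and keep $|I_j|\ge \tfrac13|I_{j-1}|$''. This conflates the \emph{measure} of the good set $\{R:\cos(Rl_{\gamma_j})\ge\tfrac12\}$ inside $I_{j-1}$ (which is indeed $\sim|I_{j-1}|/3$) with the length of a single connected good \emph{sub-interval}. A maximal good arc has length exactly $\tfrac{2\pi/3}{l_{\gamma_j}}$, independent of $|I_{j-1}|$; so after one step you are forced down to $|I_j|=\tfrac{2\pi/3}{l_{\gamma_j}}$. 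At the next step you need $|I_j|\ge 2\pi/l_{\gamma_{j+1}}$ to be sure $I_j$ contains a full period, which forces $l_{\gamma_{j+1}}\ge 3\,l_{\gamma_j}$; iterating would give $l_{\gamma_N}\ge 3^{N-1}l_{\gamma_1}$, impossible since all lengths are $\le 5T$ while $N\sim e^{5T}$. Concretely, whenever $l_{\gamma_{j+1}}<2l_{\gamma_j}$ a bad arc for $\gamma_{j+1}$ has length $\tfrac{4\pi/3}{l_{\gamma_{j+1}}}>|I_j|$, so $I_j$ can sit entirely inside it and the construction halts. Your earlier, looser version (``$|I_j|\ge\tfrac13|I_{j-1}|\min(1,\tfrac{2\pi}{l_{\gamma_j}|I_{j-1}|})$'') has the same defect in the regime $|I_{j-1}|<2\pi/l_{\gamma_j}$.

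The argument in \cite{PR94} avoids this cascade by applying Dirichlet's box principle directly on the torus $(\IR/2\pi\IZ)^N$: one pigeonholes the points $\bigl(R\,l_{\gamma_1},\ldots,R\,l_{\gamma_N}\bigr)\bmod 2\pi$, for $R$ running over a suitable arithmetic progression, into $\sim 6^N$ boxes of side $\pi/3$; two points falling in the same box yield the required $R$. This treats all $N$ constraints simultaneously rather than sequentially, and is what makes the exponent $\exp(cN)\le\exp(\exp(5T))$ legitimate.
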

In the sequel we take $M=\exp(\exp(cT))$, ($c>0$ arbitrary) to ensure that $T$ is of order $\log\log R$.
We note that this relation between $R$ and $T$ is independent of $\omega$. 
This will allow us to modify slightly our initial problem by extending it to the case where $\omega$ can depend on $R$ (or $T$).
More precisely, we want to consider the case when $\omega=\Theta(R)\omega_o$, where $\omega_o$ is fixed and $\Theta(R)\geq 1$ is allowed to
go to infinity with $R$ at a reasonable rate.

This means that we consider a slight generalization of
\eqref{e:spec}~ (the motivation should become clearer in \S \ref{p:proof3}):

\subsection{A more general problem}
We consider a spectral problem of the form
\begin{equation}\label{e:spec2}(\cP-z)u=0
\end{equation}
where $$\cP=\cP(z)=P+i\hbar\theta(\hbar)Q(z),\qquad P=-\frac{\hbar^2\lap}2$$
where $z\in \Omega=e^{i]-s_0, s_0[}]E_{\min}, E_{\max}[$, with $0<E_{\min}<\frac12<E_{\max}<+\infty$, $0<s_0<\frac\pi{4}$. We will assume that
 $ Q(z)\in \Psi DO^1 $ depends holomorphically on $z\in \Omega$, and that $\theta(\hbar)$ is some real valued function such that $\theta(\hbar)\geq 1$ and $\hbar\theta(\hbar)\Lim_{\hbar\To 0}0$. We have in mind $\theta(\hbar)=|\log(\hbar)|$. Finally, we assume that $ Q$ is formally self-adjoint for $z$ real. Again, we call $\Sigma$ the ``spectrum'' the set of $z$ for which the
 equation $(\cP(z)-z)u=0$ has a solution.

The results of \S \ref{p:semiclass} can be generalized as follows~: for any $E_{\min}<E_1\leq E_2<E_{\max}$,
\begin{equation}\label{e:Weyl3}\sharp\left\{ z\in\Sigma, E_1\leq \Re e(z)\leq E_2\right\}=\frac{1}{(2\pi\hbar)^d}\left[\int_{p_o^{-1}[E_1, E_2]}dxd\xi+\cO({\hbar\theta(\hbar)})\right].
\end{equation}
One can show that $\frac{\Im m(z)}{\hbar\theta(\hbar)}$ has to stay bounded for $z\in\Sigma$.
Taking $\theta(\hbar)=|\log(\hbar)|$, $E_1=E-c\hbar\theta(\hbar)$ and $E_2=E+c\hbar\theta(\hbar)$, one has
$$q^-_E+o(1)\leq\frac{\Im m(z)}{\hbar\theta(\hbar)}\leq q^+_E+o(1)$$
for $z\in\Sigma$ such that $E_1\leq \Re e(z)\leq E_2$.
Assuming that the geodesic flow is ergodic on $p_o^{-1}\left\{E\right\}$, {\em and for $\theta(\hbar)=|\log(\hbar)|$}, then for any $\eps>0$, any $c>0$,
\begin{equation}\label{e:LGN2}\sharp\left\{ z\in\Sigma, E-c\hbar\theta(\hbar)\leq \Re e(z)\leq
 E +c\hbar\theta(\hbar), \frac{\Im m(z)}{\hbar\theta(\hbar)}\not\in [\bar q_E-\eps, \bar q_E+\eps]\right\}=\theta(\hbar)o(\hbar^{1-d}).
\end{equation}

\begin{rem}
The paper \cite{Sj00} only treats the case $\theta(\hbar)=1$. But the method
of \cite{Sj00}, \S5 can be adapted in a straighforward way to show the following~: consider the spectral problem
\begin{equation}\label{e:spec3}(\cP-z)u=0
\end{equation}
where $$\cP=\cP(z)=P+i\hbar\theta Q(z),\qquad P=-\frac{\hbar^2\lap}2$$
where $z\in \Omega=e^{i]-s_0, s_0[}]E_{\min}, E_{\max}[$, with $0<E_{\min}<\frac12<E_{\max}<+\infty$, $0<s_0<\frac\pi{4}$.  Fix some $\eps>0$. Then
\begin{equation}\label{e:Weyl4}\sharp\left\{ z\in\Sigma, E_1\leq \Re e(z)\leq E_2\right\}=\frac{1}{(2\pi\hbar)^d}\left[\int_{p_o^{-1}[E_1, E_2]}dxd\xi+\cO(\hbar\theta)\right].
\end{equation}
uniformly in all $\theta\geq 1$ such that $\theta\hbar\leq \eps$ and $E_1, E_2$ such that $E_{\min}<E_1-2\eps,
E_2+2\eps<E_{\max}$, $|E_2-E_1|\geq\hbar\theta$.
.

For \eqref{e:LGN2} (and $\theta(\hbar)=|\log\hbar|$), the generalization of the proof in \cite{Sj00} is without surprise, but requires some rather technical changes~: we will not prove it here, but still feel allowed to ask
about Question 3 in this generalized setting. We note that to extend \eqref{e:LGN2} to more general $\theta(\hbar)$, some analyticity assumptions would be required, as in \cite{HSjV07}.
\end{rem}

We will focus our attention on the operator
$$-\hbar^2\frac{\lap_{\theta(\hbar)\omega}}2=-\hbar^2\frac{\lap}2+\hbar^2\theta(\hbar) \la \omega, d.\ra-\hbar^2\theta(\hbar)^2\frac{\norm{\omega}_x^2}2, \qquad \hbar\To 0,$$
when $\omega$ has coefficients in $\IR$.

We generalize Question 3 as\\
{\bf (Q3'')} If $\omega\not=0$, prove that there exists
a sequence  $\hbar_n\To 0$, and $z_n\in Sp(-\hbar_n^2\frac{\lap_{\theta(\hbar_n)\omega}}2)$
with $\Re e(z_n)\in [\frac12-C\hbar_n \theta(\hbar_n), \frac12+C\hbar_n\theta(\hbar_n)]$, 
such that $\frac{\Im m(z_n)}{\hbar_n
\theta(\hbar_n)}\not\To 0$.

\subsection{Heuristic discussion of the parameters $\sigma, R, T$\label{p:proof3}}
We start again from the trace formula
\eqref{e:tracegauss}, considering the case where
$\omega=\Theta(R)\omega_o$, $\Theta(R)=\theta(R^{-1})\geq 1$. Here $R^{-1}$ is going to play the role of the small parameter $\hbar$. The form $\omega_o$ is fixed, and we normalize it to have stable norm $\norm{\omega_o}_s=1$.
\begin{eqnarray}\norm{\omega}_s &=&\sup\left\{\int_{S^*M} \omega \,d\mu, \mu\in\cM_{\frac12}\right\}\\
&=&\sup_{\gamma} \frac{\int_\gamma \omega}{l_\gamma}.
\end{eqnarray}
The first line can be considered as a definition of the stable norm (valid for 
a general compact riemannian manifold $M$), whereas
the second line holds on negatively curved manifolds because of the density of the closed geodesics. Using the definition 
${\rm Pr}(\omega)=\sup\left\{h_{KS}(\mu)+\int_{S^*M} \omega \,d\mu, \mu\in\cM_{\frac12}\right\}$, it is not difficult to show that $$\lim_{t\To \infty}{\rm Pr}(t\omega)-\abs{t}\norm{\omega}_s=\sup\left\{h_{KS}(\mu), 
\mu\in\cM_{\frac12}, \int_{S^*M} \omega \,d\mu=\norm{\omega}_s\right\}.$$ On a surface, the right-hand side vanishes \cite{A03}.
Besides, for any $T$ one can find a closed geodesic $\gamma$ with $l_\gamma\in [T-1, T]$, and such that
\begin{equation}\label{e:sn}\frac{\int_\gamma \omega_o}{l_{\gamma}}\geq \norm{\omega_o}_s(1+o_T(1))= (1+o_T(1)),\end{equation}
where $o_T(1)$ goes to $0$ as $T$ approaches $+\infty$.
Simply recall that for any $0<\delta<1$,
\begin{equation}\label{e:H}\lim \frac{\log \sharp\left\{\gamma, l_\gamma\in [T-1, T],
\frac{\int_\gamma \omega_o}{l_{\gamma_o}}\geq (1-\delta)\right\}}{T}=
H(1-\delta)>0,\end{equation}
where
$$H(\alpha)=\sup\left\{h_{KS}(\mu), \mu\in\cM_{\frac12}, \int_{S^*M} \omega_o\,d\mu=\alpha\right\}.$$
The function $H$ is continuous, concave on $[-1, 1]$, real-analytic
on $]-1, 1[$ \cite{BabLed98}. And again, $H(-1)=H(1)=0$ on a compact surface \cite{A03}.

In \eqref{e:tracegauss}, we have not said yet how $\sigma$ will depend on $R$ and $T$. For the moment, let us decide {\em a priori} that $\sigma$ should be such that the term
$$\frac{Area(M)}{4\pi}\int_{-\infty}^{+\infty}r\tanh\pi r \left[e^{-\frac{\sigma^2}2(r-R)^2}e^{-iTr}+e^{-\frac{\sigma^2}2(r+R)^2}e^{iTr}\right]dr$$
is negligible compared to the sum $\sum_\gamma$.
Remember that $R$ and $T$ are chosen so as to satisfy Lemma \ref{l:JPTtrick}.
Then, fixing $1>\delta>0$, the right hand side of \eqref{e:tracegauss} should be bounded from below by
\begin{equation}\label{e:below}\sigma^{-1} e^{TH(1-\delta)-T/2}e^{\Theta(R)(1-\delta) T}e^{-\frac{1}{2\sigma^2}}.\end{equation}
We want to use the fact that this grows quite fast with $T$.
On the other hand, looking at the left hand side of \eqref{e:tracegauss},
we cannot hope to do better than to bound it from above by
\begin{equation}\label{e:above}\sharp\left\{j, |\Re e(r_j)-R|\leq \sigma^{-1} \right\} e^{\frac{\sigma^2}2 \sup_j \Im m(r_j)^2} e^{T\sup_j |\Im m(r_j)|},\end{equation}
where each time the $\sup_j$ should be restricted to the indices $j$ such that $|\Re e(r_j)-R|\leq \sigma^{-1}$. 

This heuristic argument would give an inequality
\begin{equation}\label{e:comparison}\sharp\left\{j, |\Re e(r_j)-R|\leq \sigma^{-1} \right\} e^{\frac{\sigma^2}2 \sup_j \Im m(r_j)^2} e^{T\sup_j |\Im m(r_j)|}\geq \sigma^{-1} e^{TH(1-\delta)-T/2}e^{\Theta(R)(1-\delta) T}e^{-\frac{1}{2\sigma^2}},\end{equation}
obtained by comparing the lower bound \eqref{e:below}
and the upper bound \eqref{e:above}. Again, the hope is to compare the powers of $e^T$ on both sides to prove that $\sup_j |\Im m(r_j)|$ cannot be arbitrarily small.

Consider the case $\Theta(R)=1$, which is the case we were originally interested in. If we take $\sigma$ to be a constant, then
by Weyl's law we have
$\sharp\left\{j, |\Re e(r_j)-R|\leq \sigma^{-1} \right\}\sim R\geq \exp(\exp(c T))$. In this case \eqref{e:comparison}
cannot bring any useful information. On the other hand, if we want to choose $\sigma$ such that 
$\sharp\left\{j, |\Re e(r_j)-R|\leq \sigma^{-1} \right\}$ is bounded, we are led to take $\sigma\sim R$; in this case the term $e^{\frac{\sigma^2}2 \sup_j \Im m(r_j)^2} $ will be too large to yield any interesting information.

We see that the method only has a chance to work if $T\Theta(R)\gg\log R$. From now on
we take $\Theta(R)\geq \log R$, and always such that $R^{-1}\Theta(R)\To 0$. We also take
$\sigma^{-2}=C\Theta(R)$ with $C$ large. We must note that the parameters $r_j$ correspond
to the eigenvalues of $-\lap_{\Theta(R)\omega}$, and thus they also depend on $R$.

\subsection{Proof of Theorem \ref{t:q3}}The right hand side of \eqref{e:tracegauss} is easy to understand. The term 
\begin{equation}\label{e:int}\int_{-\infty}^{+\infty}r\tanh\pi r \left[e^{-\frac{\sigma^2}2(r-R)^2}e^{-iTr}+e^{-\frac{\sigma^2}2(r+R)^2}e^{iTr}\right]dr\end{equation} is $\cO(\sigma^{-1}R)$, whereas the $\sum_\gamma$
has modulus greater than
\begin{multline}
\frac12\sum_{l_\gamma\leq 5 T} \frac{e^{\int_\gamma \omega}l_{\gamma_o}}{\sinh\frac{l_\gamma}2}
\frac{1}{\sqrt{2\pi}\sigma}\left[e^{-\frac{(l_\gamma-T)^2}{2\sigma^2}}+e^{-\frac{(l_\gamma +T)^2}{2\sigma^2}}\right]
+ \sum_{l_\gamma\geq 5T} \frac{e^{\int_\gamma \omega}l_{\gamma_o}}{\sinh\frac{l_\gamma}2}
\frac{1}{\sqrt{2\pi}\sigma}\left[e^{-\frac{(l_\gamma-T)^2}{2\sigma^2}}+e^{-\frac{(l_\gamma +T)^2}{2\sigma^2}}\right]\cos(l_\gamma R)
\\ \geq 
\frac12\sum_{T-1\leq l_\gamma\leq  T} \frac{e^{\int_\gamma \omega}l_{\gamma_o}}{\sinh\frac{l_\gamma}2}
\frac{1}{\sqrt{2\pi}\sigma}\left[e^{-\frac{(l_\gamma-T)^2}{2\sigma^2}}+e^{-\frac{(l_\gamma +T)^2}{2\sigma^2}}\right]
+ \sum_{l_\gamma\geq 5T} \frac{e^{\int_\gamma \omega}l_{\gamma_o}}{\sinh\frac{l_\gamma}2}
\frac{1}{\sqrt{2\pi}\sigma}\left[e^{-\frac{(l_\gamma-T)^2}{2\sigma^2}}+e^{-\frac{(l_\gamma +T)^2}{2\sigma^2}}\right]\cos(l_\gamma R)
\\ \geq 
\frac{1}{\sqrt{8\pi}\sigma} \left[e^{TH(1-\delta)-T/2}e^{\Theta(R)(1-\delta) T}
e^{-\frac{1}{2\sigma^2}} +\cO(1)\right],\label{e:modulus}
\end{multline}
(using \eqref{e:H}), and thus is much greater than the integral \eqref{e:int}. To get the last $\cO(1)$ we have used the exponential growth of the number of closed geodesics. The left hand side of \eqref{e:tracegauss}
is more complicated to bound from above, since the $r_j$ now depend on $R$.

\begin{prop} \label{p:f}Take $\Theta(R)\geq \log R$, and such that $R^{-1}\Theta(R)\To 0$.
Take $\sigma^{-2}=C\Theta(R)$. Let $f(R)$ be such that $\sigma^2f(R)^2\gg T\Theta(R)$.
Then
$$|\sum_j e^{-\frac{\sigma^2}2(r_j-R)^2}e^{-iTr_j}|
\leq  \sharp\left\{j, |\Re e(r_j)-R|\leq f(R) \right\} e^{\frac{\sigma^2}2 \sup_j \Im m(r_j)^2} e^{T\sup_j |\Im m(r_j)|}+\cO(1),$$
where the $\sup_j$ are taken over the set of indices $j$ such that  $|\Re e(r_j)-R|\leq f(R) $.
\end{prop}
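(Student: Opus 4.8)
The plan is to estimate $\sum_j e^{-\frac{\sigma^2}2(r_j-R)^2}e^{-iTr_j}$ directly, combining the Gaussian decay in $\Re e(r_j)$ with an a priori bound on $\Im m(r_j)$ and a Weyl-type count, after splitting the sum at $\abs{\Re e(r_j)-R}=f(R)$. First I would write $r_j=x_j+iy_j$, so that
$$\abs{e^{-\frac{\sigma^2}2(r_j-R)^2}e^{-iTr_j}}=e^{-\frac{\sigma^2}2\left((x_j-R)^2-y_j^2\right)}\,e^{Ty_j}=e^{-\frac{\sigma^2}2(x_j-R)^2}\,e^{\frac{\sigma^2}2y_j^2}\,e^{Ty_j}\le e^{-\frac{\sigma^2}2(x_j-R)^2}\,e^{\frac{\sigma^2}2y_j^2}\,e^{T\abs{y_j}}.$$
Two facts, uniform in $R$, are needed. (a) An a priori bound $B:=\sup_j\abs{\Im m(r_j)}=\cO(\Theta(R))$: testing the eigenvalue equation $-\lap_{\Theta(R)\omega_o}f=\lambda_j f$ against the normalised eigenfunction and separating real and imaginary parts (as in the proof that the trace formula extends to $\omega\in H^1(M,\IR)$) bounds $\abs{\Im m(\lambda_j)}$ by $\cO\bigl(\Theta(R)(1+\sqrt{\Re e(\lambda_j)})\bigr)$, whence $\abs{y_j}=\cO(\Theta(R))$ for every $j$; this is also consistent with the boundedness of $\Im m(z)/(\hbar\theta(\hbar))$ recalled after \eqref{e:Weyl3}. (b) The Weyl law \eqref{e:Weyl3}, which holds uniformly in the parameters in the range considered here (cf. the remark following \eqref{e:Weyl4}, using $\Theta(R)/R\To 0$), giving $\sharp\{j:\ \abs{\Re e(r_j)-R}\le\Lambda\}=\cO(R\Lambda+\Lambda^2)$ for all $\Lambda>0$.

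Now split the sum according to whether $\abs{x_j-R}\le f(R)$ or $\abs{x_j-R}>f(R)$. In the first part, bound $e^{-\frac{\sigma^2}2(x_j-R)^2}\le 1$; since $y\mapsto e^{\frac{\sigma^2}2y^2+T\abs y}$ increases with $\abs y$, each term is at most $e^{\frac{\sigma^2}2\sup_j\Im m(r_j)^2}\,e^{T\sup_j\abs{\Im m(r_j)}}$ with the suprema over the indices with $\abs{\Re e(r_j)-R}\le f(R)$, and summing produces exactly the first term on the right-hand side of the proposition. In the second part, bound $\abs{y_j}\le B$ in the factors $e^{\frac{\sigma^2}2y_j^2}e^{T\abs{y_j}}$, pull them out, group the remaining $\sum_{\abs{x_j-R}>f(R)}e^{-\frac{\sigma^2}2(x_j-R)^2}$ by unit intervals in $x_j-R$, and use (b) together with $\sum_{\abs k>f(R)}(R+\abs k)e^{-\frac{\sigma^2}2k^2}\lesssim R\,\sigma^{-1}e^{-c\sigma^2 f(R)^2}$ for some $c>0$: the second part is
$$\lesssim R\,\Theta(R)^{1/2}\;e^{\frac{\sigma^2}2B^2}\;e^{TB}\;e^{-c\sigma^2 f(R)^2}.$$
It remains to see that the exponent tends to $-\infty$. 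With $\sigma^{-2}=C\Theta(R)$, $\Theta(R)\ge\log R$, and $T$ of order $\log\log R$ (by the choice of $R$ in Lemma \ref{l:JPTtrick}), one has $\log R\le\Theta(R)$, $\frac{\sigma^2}2B^2=\cO(\Theta(R))$ and $TB=\cO(T\,\Theta(R))$, so all positive contributions are $\cO(T\,\Theta(R))$, whereas $c\sigma^2 f(R)^2\gg T\,\Theta(R)$ by the hypothesis $\sigma^2 f(R)^2\gg T\Theta(R)$. Hence the second part is $o(1)$, which gives the asserted $\cO(1)$, and the proposition follows.

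The main obstacle is precisely this estimate of the ``far'' part: the single hypothesis $\sigma^2 f(R)^2\gg T\Theta(R)$ must dominate at once the Weyl density $R\le e^{\Theta(R)}$, the spreading factor $e^{\sigma^2 B^2/2}=e^{\cO(\Theta(R))}$ (only subexponential, not bounded, because the imaginary parts are $\cO(\Theta(R))$ and not $\cO(1)$), and the factor $e^{TB}=e^{\cO(T\Theta(R))}$ --- which is exactly why one fixed beforehand $\sigma^{-2}\asymp\Theta(R)$ and $T\asymp\log\log R$, and why the condition on $f$ is essentially sharp for this method. A minor point to verify is that the Weyl count in (b) is genuinely uniform as $\Theta(R)\to\infty$; this is unproblematic since $\Theta(R)/R\to0$.
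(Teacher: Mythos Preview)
Your overall strategy---split at $|x_j-R|=f(R)$, bound the near part trivially, and kill the far part using the a priori bound $|y_j|=\cO(\Theta(R))$ together with a Weyl-type count and the Gaussian decay---is exactly the paper's. The a priori bound (your (a)) is proved in the paper as a short lemma by the same computation you indicate.

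There is, however, a genuine gap in your step (b). The semiclassical Weyl laws \eqref{e:Weyl3}--\eqref{e:Weyl4} are stated for energies $E_1,E_2$ in a fixed window $(E_{\min},E_{\max})$ bounded away from $0$, and uniformly only under the constraint $\theta\hbar\le\eps$. Translated to the spectral parameter $r$: to count $r_j$ in a unit interval near $A$ one must take $\hbar\asymp A^{-1}$, and then $\theta\hbar=\Theta(R)/A$, so the hypothesis $\theta\hbar\le\eps$ forces $A\gtrsim\Theta(R)$. Your claim that $\sharp\{j:|\Re e(r_j)-R|\le\Lambda\}=\cO(R\Lambda+\Lambda^2)$ holds ``for all $\Lambda>0$'' is therefore unjustified once the window reaches down to $|\Re e(r_j)|\lesssim\Theta(R)$; there the perturbation $i\hbar\theta Q$ is no longer semiclassically small and \eqref{e:Weyl3}--\eqref{e:Weyl4} say nothing. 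This is not the uniformity issue you flag at the end; it is a range-of-validity issue. (A minor related point: even where \eqref{e:Weyl4} does apply, it yields $\cO((R+|k|)\Theta(R))$ per unit interval, not $\cO(R+|k|)$---see the paper's estimate for the sum $III$; this extra $\Theta(R)$ is harmless in the final exponent comparison.)

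The paper closes the gap by proving a separate crude global count, Lemma~\ref{l:Weyl5}: $\sharp\{j:|\Re e(r_j)|\le R\}=\cO(R^2\Theta(R))$. This is obtained not from semiclassical analysis but by feeding the Gaussian $\hat f(r)=e^{-r^2/(2R^2)}$ into the trace formula and using (a) to control the phases $e^{-i\Re e(r_j)\Im m(r_j)/R^2}=1+\cO(\Theta(R)/R)$. With this in hand, the contribution of the range $|\Re e(r_j)|\le R-f(R)$ is bounded by $R^2\Theta(R)\,e^{\sigma^2 B^2/2}e^{TB}e^{-\sigma^2 f(R)^2/2}=\cO(1)$ by the very exponent comparison you already wrote. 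You should insert this step; the rest of your argument then goes through.
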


\begin{proof}
\begin{lem} We have an a priori bound $|\Im m(r_j)|\leq c\,\Theta(R)$, where $c$ depends only on $\omega_o$.
\end{lem}

\vspace{.3cm}
Indeed, let $r_j=x+iy$ and $f\in L^2(M)$ be such that $\norm{f}_{L^2}=1$ and
$$-\lap f+2\Theta(R)\la \omega_o, df\ra-\Theta(R)^2 \norm{\omega_o}^2_x f=\left(\frac14+r_j^2\right)f.$$
Taking the scalar product with $f$, we get
\begin{equation}\label{e:1}\frac14+x^2-y^2=\int |\nabla f|^2 -\Theta(R)^2\int 
\norm{\omega_o}_x^2\abs{f}^2\end{equation}
and
\begin{equation}\label{e:2}2xy=-2\Theta(R) i\int\la\omega_o, df\ra\bar f . \end{equation}
Equation \eqref{e:2} yields $\abs{xy}\leq c\, \Theta(R) \sqrt{\int |\nabla f|^2}.$ Equation \eqref{e:1}
implies that $x^2\geq \int |\nabla f|^2 -c^2\Theta(R)^2.$ If $\abs{x}\geq \frac12\sqrt{\int |\nabla f|^2}$ then we are done, by \eqref{e:2}. If $\abs{x}\leq \frac12\sqrt{\int |\nabla f|^2}$, then \eqref{e:1} implies that $\int |\nabla f|^2 \leq 2c^2 \Theta(R)^2$ and that $y^2\leq \frac14 +5c^2 \Theta(R)^2.$
The lemma follows.

\vspace{.4cm}
We now break the sum
$\sum_j e^{-\frac{\sigma^2}2(r_j-R)^2}e^{-iTr_j}$ into three parts~: $I=\sum_{j,
\Re e(r_j)\leq R- f(R)}$,  $II=\sum_{j,|\Re e(r_j)-R|\leq f(R)}$ and 
$III=\sum_{j,
\Re e(r_j)\geq R+ f(R)}$.

The last sum $III$ is bounded by
$$e^{\frac{\sigma^2}2 c^2\Theta(R)^2} e^{cT\Theta(R)}\sum_{j,
\Re e(r_j)\geq R+ f(R)}e^{-\frac{\sigma^2}2(\Re e(r_j)-R)^2} .$$
We decompose this sum into $\sum_{n\geq 0} \sum_{R+ f(R)+n\leq\Re e(r_j)\leq R+ f(R)+n+1}$, and by Weyl's law in the form \eqref{e:Weyl4}, this is dominated by
$$
e^{cT\Theta(R)}\sum_{n\geq 0} (R+n+f(R))\Theta(R) e^{-\frac{\sigma^2}2(n+f(R))^2} \leq
e^{cT\Theta(R)}\Theta(R)\int_{f(R)-1}^{+\infty}(R+x)e^{-\frac{\sigma^2 x^2}2}dx
$$
and with our relations between $T, R, f(R)$ and $\sigma$, this last quantity is $\cO(1)$.

Concerning the first sum $I$, we bound it by
\begin{equation}\label{e:firstsum}e^{\frac{\sigma^2}2 c^2\Theta(R)^2} e^{cT\Theta(R)}\sum_{j,
\Re e(r_j)\leq R- f(R)}e^{-\frac{\sigma^2}2(\Re e(r_j)-R)^2}.\end{equation} The subsum $\sum_{j,
\Re e(r_j)\leq -R+ f(R)}$ can be treated as above and shown to be $\cO(1)$ (using Weyl's law in the form \eqref{e:Weyl4}), and we only need to concentrate on
$\sum_{j,
|\Re e(r_j)|\leq R- f(R)}$. To bound this sum, we first need a control the number of terms.

\begin{lem}$\sharp\left\{j, |\Re e(r_j)|\leq R\right\}=\cO(R^2\Theta(R)).$\label{l:Weyl5}
\end{lem}
To that end, we use again the trace formula and write~:
\begin{equation*}
\sum_j e^{-\frac{r_j^2}{2R^2}} 
=\frac{Area(M)}{4\pi}\int_{-\infty}^{+\infty}r\tanh(\pi r). e^{-\frac{r^2}{2R^2}} dr +
\sum_\gamma \frac{e^{\Theta(R)\int_\gamma \omega_o}l_{\gamma_o}}{\sinh\frac{l_\gamma}2}
\frac{R}{\sqrt{2\pi}}e^{-R^2 l_\gamma^2/2}.
\end{equation*}
On the right, the term $\sum_\gamma$ is clearly $o(1)$ whereas the $\int$ term is  of order $ R^2$.
On the left, we break the sum into $\sum_{j, |\Re e (r_j)|\leq R}$ and $\sum_{j, |\Re e (r_j)|\geq R}$. As above, we can use Weyl's law in the form \eqref{e:Weyl4} to show that the $\sum_{j, |\Re e (r_j)|\geq R}e^{-\frac{r_j^2}{2R^2}} $
is $\cO \sum_{n\geq 0}(R+n+1)\Theta(R)e^{-\frac{(R+n)^2}{2R^2}}=\cO(R^2\Theta(R))$. Thus we have
$$\sum_{j, |\Re e( r_j)|\leq R}e^{-\frac{\Re e (r_j)^2-\Im m(r_j)^2}{2R^2}}e^{-\frac{i \Re e(r_j)\Im m(r_j)}{R^2}}
=\cO(R^2\Theta(R)).$$
Remember that $|\Im m(r_j)|\leq c\,\Theta(R)$ and that $R^{-1}\Theta(R)\To 0$. Thus, for $|\Re e (r_j)|\leq R$ we can write $e^{-\frac{i \Re e(r_j)\Im m(r_j)}{R^2}}=1+\cO(R^{-1}\Theta(R))$. This yields
\begin{eqnarray*} e^{-1/2} 
\sharp\left\{j, |\Re e(r_j)|\leq R\right\}\left(1+\cO(R^{-1}\Theta(R))\right)&\leq& \sum_{j, |\Re e r_j|\leq R}e^{-\frac{\Re e (r_j)^2-\Im m(r_j)^2}{2R^2}}\left(1+\cO(R^{-1}\Theta(R))\right)
\\ &=&\cO(R^2\Theta(R)) 
\end{eqnarray*}
and finishes the proof of Lemma \ref{l:Weyl5}.
 
Now, we go back to the sum $\sum_{j,
|\Re e(r_j)|\leq R- f(R)}$ in \eqref{e:firstsum}, and we see that it is bounded by
$$ R^2\Theta(R) e^{\frac{\sigma^2}2 c^2\Theta(R)^2} e^{cT\Theta(R)}e^{-\sigma^2 f(R)^2/2}=\cO(1).$$
This ends the proof of Proposition \ref{p:f}
\end{proof}
We can now come back to \eqref{e:tracegauss}. Noting that $\sigma^2 \sup_j \Im m(r_j)^2=\cO(1)$,
Proposition \ref{p:f} shows that the left-hand side of the trace formula \eqref{e:tracegauss}
is bounded from above by 
 $$C \sharp\left\{j, |\Re e(r_j)-R|\leq f(R) \right\} e^{T\sup_j |\Im m(r_j)|}+\cO(1)
 \leq CRf(R)\Theta(R)e^{T\sup_j |\Im m(r_j)|}$$
 where the $\sup$ is taken over all $j$ such that $|\Re e(r_j)-R|\leq f(R)$, and where we have used again \eqref{e:Weyl4}. We can of course assume, without loss of generality, that $f(R)=\cO(R)$.
 
 On the right hand side of \eqref{e:tracegauss}, the $\int$ is $\cO(R\Theta(R)^{1/2})$, and the $\sum_\gamma$ is bounded from below by
 $$\frac{1}{\sqrt{8\pi}\sigma} e^{TH(1-\delta)-T/2}e^{\Theta(R)(1-\delta) T}
e^{-\frac{1}{2\sigma^2}} ,$$
 as in \eqref{e:modulus}. Writing
  $$CRf(R)\Theta(R) e^{T\sup_j |\Im m(r_j)|}\geq \sigma^{-1} e^{TH(1-\delta)-T/2}e^{\Theta(R)(1-\delta) T}e^{-\frac{1}{2\sigma^2}},$$
remembering that $\Theta(R)\geq \log(R)$, $\Theta(R)=o(R)$, $\sigma^{-2}=c\,\Theta(R)$
and $T\asymp\log\log R$, we see that necessarily
$$\sup_j |\Im m(r_j)|\geq (1-2\delta)\Theta(R).$$
This finishes the proof of Theorem \ref{t:q3}.  

\section{The arithmetic case.\label{s:arithm}}
Let $p\geq 3$ be a prime, $p\equiv 1\,({\rm mod } \;4)$, and $A\geq 1$ be a quadratic non-residue modulo $p$. We set
$$\Gamma=\Gamma(A, p)=\left\{\left( \begin{array}{cc} 

y_0+y_1\sqrt{A} & y_2\sqrt{p}+y_3\sqrt{Ap} \\
 y_2\sqrt{p}-y_3\sqrt{Ap} & y_0-y_1\sqrt{A}  \\
 
 \end{array}  \right), y_0, y_1, y_2, y_3\in\IZ\right\}.$$
It is a discrete cocompact subgroup of $SL(2, \IR)$ which contains only hyperbolic transformations \cite{Hej}.
We consider the hyperbolic surface $M=\Gamma\backslash \IH$.
In $M$, the lengths of the closed geodesics are the $\log x_m$, where
$$x_m=2m^2-1+2m\sqrt{m^2-1},\qquad m\in\IN.$$ We define
$$\mu(m)=\sum_{\gamma,\, l_\gamma=\log x_m}e^{\int_\gamma \omega}l_{\gamma_o}.$$
We now follow very closely the approach of \cite{Hej}, pp. 304--314. We introduce an even function $k$ on $\IR$, whose Fourier transform is nonnegative and compactly supported in $[-1, 1]$; we also assume
that $\hat k\geq 1$ on $[-\frac12, \frac12]$. We define
$$K_{\alpha}(r)=k(r)[e^{i\alpha r}+e^{-i\alpha r}].$$
 
We write again the trace formula~: for all $t>0$,
\begin{multline}\label{e:traceshift}
\sum_j K_\alpha(r_j-t)+K_\alpha(r_j+t)=\frac{Area(M)}{4\pi}\int r \tanh(\pi r) [K_\alpha(r-t)+K_\alpha(r+t)]dr\\
+2\sum_\gamma \frac{e^{\int_\gamma \omega}l_{\gamma_o}}{\sinh\frac{l_\gamma}2}\hat K_\alpha(l_\gamma)\cos(tl_\gamma).
\end{multline}
We will bound from below the right-hand side (averaged in $t$) to obtain information on the left-hand side. Denote
$$S_\alpha(t)=\sum_\gamma \frac{e^{\int_\gamma \omega}l_{\gamma_o}}{\sinh\frac{l_\gamma}2}\hat K_\alpha(l_\gamma)\cos(tl_\gamma)=
2\sum_{e^{\alpha-1}\leq x_m\leq e^{\alpha+1}}\frac{\mu(m)}{x_m^{1/2}+x_m^{-1/2}}\hat K_\alpha(\log x_m)\cos(t\log x_m).$$
\begin{prop}Let $\alpha=2\beta\log T-C$, $0<\beta\leq 1$ and $C$ large enough. Then,
$$\int_{2T-T^\beta}^{2T+T^\beta}\left(1-\frac{\abs{t-2T}}{T^\beta}\right)|S_\alpha(t)|^2 dt\geq \tilde C T^{\beta(4{\rm Pr}(\omega)-1)}.$$
\end{prop}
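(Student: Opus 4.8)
My plan is to expand $|S_\alpha(t)|^2$, integrate it term by term against the triangular (Fej\'er) weight, and keep only the diagonal, the point being that the off–diagonal terms are killed either by the decay of the Fej\'er kernel away from the origin or by the fact that the \emph{arithmetic} length spectrum is extremely well separated. Throughout I write $l_m=\log x_m$ and $c_m=\dfrac{\mu(m)}{x_m^{1/2}+x_m^{-1/2}}\hat K_\alpha(l_m)$, so that $S_\alpha(t)=2\sum_{m\in\mathcal W}c_m\cos(tl_m)$ with $\mathcal W=\{m:\ e^{\alpha-1}\le x_m\le e^{\alpha+1}\}$. First one checks that $S_\alpha$ is real and that $c_m\ge0$: indeed $\mu(m)>0$, $\hat K_\alpha=\hat k(\cdot-\alpha)+\hat k(\cdot+\alpha)\ge0$, and the identity $2m^2-1-2m\sqrt{m^2-1}=x_m^{-1}$ gives $x_m^{1/2}+x_m^{-1/2}=2m=2\cosh(l_m/2)$, so $x_m^{1/2}+x_m^{-1/2}\asymp e^{\alpha/2}$ on $\mathcal W$. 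The weight $w_T(t)=\bigl(1-|t-2T|/T^\beta\bigr)\mathbf 1_{[2T-T^\beta,\,2T+T^\beta]}(t)$ satisfies $\int w_T(t)\cos(t\xi)\,dt=\cos(2T\xi)\,F_{T^\beta}(\xi)$ with $F_L(\xi)=\dfrac{4\sin^2(L\xi/2)}{L\xi^{2}}\ge0$, $F_L(0)=L$, $F_L(\xi)\le\min\{L,4/(L\xi^2)\}$.

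\textbf{Reduction to a diagonal sum.} Using $2\cos a\cos b=\cos(a-b)+\cos(a+b)$ I would obtain $\int w_T|S_\alpha|^2=I_-+I_+$, where
$$I_\pm=2\sum_{m,m'\in\mathcal W}c_mc_{m'}\cos\!\bigl(2T(l_m\pm l_{m'})\bigr)\,F_{T^\beta}(l_m\pm l_{m'}).$$
In $I_+$ one has $l_m+l_{m'}\ge2(\alpha-1)$, hence $F_{T^\beta}(l_m+l_{m'})\lesssim T^{-\beta}(\log T)^{-2}$ and $|I_+|\lesssim T^{-\beta}(\log T)^{-2}\bigl(\sum_{m\in\mathcal W}c_m\bigr)^2$, which turns out to be of strictly lower order than the target. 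In $I_-$ the diagonal contributes $2T^\beta\sum_{m\in\mathcal W}c_m^2$; for the off–diagonal part I use $\cosh(l_m/2)=m$ to get $|l_m-l_{m'}|\asymp|m-m'|/m\asymp|m-m'|e^{-\alpha/2}$ for $m,m'\in\mathcal W$, so (since $\alpha=2\beta\log T-C$ forces $e^{-\alpha/2}=e^{C/2}T^{-\beta}$) $T^\beta|l_m-l_{m'}|\gtrsim e^{C/2}|m-m'|$ and $F_{T^\beta}(l_m-l_{m'})\lesssim e^{-C}T^\beta|m-m'|^{-2}$. With $|c_mc_{m'}|\le\tfrac12(c_m^2+c_{m'}^2)$ and $\sum_{k\ge1}k^{-2}<\infty$ the off–diagonal part of $I_-$ is at most $C_0e^{-C}T^\beta\sum_{m\in\mathcal W}c_m^2$, and taking $C$ large (this is where the hypothesis ``$C$ large'' is used) one gets $\int w_T|S_\alpha|^2\ge T^\beta\sum_{m\in\mathcal W}c_m^2-|I_+|$.

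\textbf{Lower bound for the coefficients, and the main obstacle.} Finally I restrict to $\mathcal W'=\{m:\ l_m\in[\alpha-\tfrac12,\alpha+\tfrac12]\}$, where $\hat K_\alpha(l_m)\ge\hat k(l_m-\alpha)\ge1$ by hypothesis, so $c_m\ge\mu(m)/(2m)\gtrsim e^{-\alpha/2}\mu(m)$ and $\sum_{m\in\mathcal W}c_m^2\gtrsim e^{-\alpha}\sum_{m\in\mathcal W'}\mu(m)^2$. The theorem then reduces to the second–moment estimate $\sum_{m\in\mathcal W'}\mu(m)^2\gtrsim e^{(2\,\mathrm{Pr}(\omega)-o(1))\alpha}$, which I expect to be the hard point. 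Its first moment $\sum_{m\in\mathcal W'}\mu(m)=\sum_{l_\gamma\in[\alpha-1/2,\alpha+1/2]}e^{\int_\gamma\omega}l_{\gamma_o}$ is, by definition of the pressure, of exponential size $e^{\mathrm{Pr}(\omega)\alpha}$, but a bare Cauchy--Schwarz over the $\asymp e^{\alpha/2}$ indices $m\in\mathcal W'$ wastes a factor $e^{\alpha/2}$ — exactly the factor one must save. To save it I would invoke the arithmeticity of $\Gamma(A,p)$ in the spirit of \cite{Hej} (pp.~304--314): the distinct lengths $\log x_m$ are sparse and their multiplicities are controlled through class numbers, while the closed geodesics carrying the bulk of the weighted sum — those whose normalized $\omega$–period is near the equilibrium value of $\omega$, enumerated by the large–deviation estimate \eqref{e:H} — pile up on the available lengths densely enough that the $\ell^2$–mass of $(\mu(m))_{m\in\mathcal W'}$ stays comparable, up to $e^{o(\alpha)}$, to the square of its mean. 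Granting this and inserting $e^{\alpha/2}=e^{-C/2}T^\beta$,
$$\int_{2T-T^\beta}^{2T+T^\beta}\Bigl(1-\tfrac{|t-2T|}{T^\beta}\Bigr)|S_\alpha(t)|^2\,dt\ \gtrsim\ T^\beta e^{-\alpha}e^{2\mathrm{Pr}(\omega)\alpha}\ \asymp\ T^{\beta(4\mathrm{Pr}(\omega)-1)},$$
the term $|I_+|$ and the $e^{o(\alpha)}$ losses being absorbed into $\tilde C$ (or, as used in Theorem~\ref{t:q3arithm}, into an arbitrarily small loss in the exponent). Everything except the arithmetic second–moment bound is just Fej\'er positivity plus separation of the length spectrum.
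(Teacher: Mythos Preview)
Your decomposition into diagonal and off--diagonal pieces, and your treatment of the off--diagonal via the spacing estimate $|l_m-l_{m'}|\asymp|m-m'|e^{-\alpha/2}$, is exactly the paper's argument (the paper packages it slightly differently, treating the $\cos(t(l_m+l_{m'}))$ terms together with $I_1,I_2$ via a single oscillatory--integral lemma rather than splitting off your $I_+$, but this is cosmetic). So up to the point where you reduce to bounding $\sum_{m\in\mathcal W'}\mu(m)^2$ from below, you are in step with the paper.

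The gap is in what you call ``the main obstacle''. You write that bare Cauchy--Schwarz over the $\asymp e^{\alpha/2}$ indices ``wastes a factor $e^{\alpha/2}$'' and then appeal to an unproved arithmetic second--moment bound $\sum_{m\in\mathcal W'}\mu(m)^2\gtrsim e^{2\mathrm{Pr}(\omega)\alpha}$. This is both unnecessary and almost certainly too strong: it would force the weighted mass to concentrate on $O(1)$ lengths, for which there is no mechanism. The paper does \emph{exactly} the ``bare Cauchy--Schwarz'' you dismiss:
\[
\sum_{m\in\mathcal W'}\mu(m)^2\ \ge\ \frac{\bigl(\sum_{m\in\mathcal W'}\mu(m)\bigr)^2}{\#\mathcal W'}\ \gtrsim\ \frac{e^{2\alpha\,\mathrm{Pr}(\omega)}}{e^{\alpha/2}}\ =\ e^{\alpha(2\mathrm{Pr}(\omega)-1/2)},
\]
using $\sum_{m\in\mathcal W'}\mu(m)=\sum_{l_\gamma\in[\alpha-1/2,\alpha+1/2]}e^{\int_\gamma\omega}l_{\gamma_0}\gtrsim e^{\alpha\,\mathrm{Pr}(\omega)}$ and $\#\mathcal W'\asymp e^{\alpha/2}$. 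Plugging this into your own inequality $\int w_T|S_\alpha|^2\gtrsim T^\beta e^{-\alpha}\sum\mu(m)^2$ gives
\[
\int w_T|S_\alpha|^2\ \gtrsim\ T^\beta\,e^{\alpha(2\mathrm{Pr}(\omega)-3/2)}\ \asymp\ T^{\beta(4\mathrm{Pr}(\omega)-2)}.
\]
This is the bound the paper's own arithmetic actually produces; the exponent $4\mathrm{Pr}(\omega)-1$ in the displayed statement appears to be a slip (and the discrepancy only shifts the constant in Theorem~\ref{t:q3arithm} from $\mathrm{Pr}(\omega)-\tfrac12$ to $\mathrm{Pr}(\omega)-\tfrac34$). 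In short: drop the hand--waved ``arithmetic second--moment bound'', keep the Cauchy--Schwarz you already wrote down, and you have the paper's proof.
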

Although we are really interested in the quantity $\int_{2T-T^\beta}^{2T+T^\beta}|S_\alpha(t)|^2 dt$,
the reason for introducing the regularizing factor $\left(1-\frac{\abs{t-2T}}{T^\beta} \right)$ is exactly the same
as in \cite{Hej}, p. 315.
\begin{proof}
Introduce the notations 
$$\eta(m)=\frac{\mu(m)}{x_m^{1/2}+x_m^{-1/2}}\hat K_\alpha(\log x_m),$$
$$\nu(m)=\mu(m)\hat K_\alpha(\log x_m).$$
 Divide the integral $I=\int_{2T-T^\beta}^{2T+T^\beta}\left(1-\frac{\abs{t-2T}}{T^\beta}\right)|S_\alpha(t)|^2 dt$ into $I=I_1+I_2$,
where
$$I_1=\sum_{e^{\alpha-1}\leq x_m\leq e^{\alpha+1}}\eta(m)^2\int_{2T-T^\beta}^{2T+T^\beta}\left(1-\frac{\abs{t-2T}}{T^\beta}\right)\cos^2(t\log x_m)dt$$
and
$$I_1=2\sum_{e^{\alpha-1}\leq x_k< x_m\leq e^{\alpha+1}}\eta(m)\eta(k)\int_{2T-T^\beta}^{2T+T^\beta}\left(1-\frac{\abs{t-2T}}{T^\beta}\right)\cos(t\log x_m)\cos(t\log x_k)dt.$$
The idea is that the $x_m, x_k$, with $x_m\not= x_k$, are well-spaced, implying that the oscillatory integral $I_2$ is small compared to $I_1$.

\begin{lem}  For $T\geq 1$ and $\lambda\in\IR$,
$$\int_{2T-T^\beta}^{2T+T^\beta}\left(1-\frac{\abs{t-2T}}{T^\beta}\right)e^{i\lambda t}dt=\cO\left[\min\left(T^\beta, \frac{1}{\lambda^2T^\beta}\right)\right].$$
\end{lem}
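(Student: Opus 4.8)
The plan is to identify the triangular weight $t\mapsto\bigl(1-\tfrac{|t-2T|}{T^\beta}\bigr)_+$ with a rescaled self-convolution of an indicator function, to compute its Fourier transform in closed form as a squared sine, and then to read off the two branches of the $\min$ from the elementary inequalities $|\sin x|\le 1$ and $|\sin x|\le|x|$. This is essentially the Fej\'er-kernel estimate.

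Concretely, I would first translate by $2T$: substituting $t=2T+s$ factors out a unimodular phase $e^{2i\lambda T}$, so it suffices to bound $\int_{-T^\beta}^{T^\beta}\bigl(1-\tfrac{|s|}{T^\beta}\bigr)e^{i\lambda s}\,ds$. Writing $a=T^\beta/2$, one has the tent identity
$$\Bigl(1-\frac{|s|}{T^\beta}\Bigr)_+=\frac{1}{2a}\,\bigl(\mathbf{1}_{[-a,a]}*\mathbf{1}_{[-a,a]}\bigr)(s),$$
since the right-hand side is $\tfrac{1}{2a}$ times the length of $[-a,a]\cap[s-a,s+a]$, which equals $2a-|s|$ for $|s|\le 2a=T^\beta$ and $0$ otherwise. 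Hence, by the convolution theorem for the Fourier transform $g\mapsto\int g(s)e^{i\lambda s}\,ds$,
$$\int_{-T^\beta}^{T^\beta}\Bigl(1-\frac{|s|}{T^\beta}\Bigr)e^{i\lambda s}\,ds=\frac{1}{2a}\Bigl(\int_{-a}^{a}e^{i\lambda s}\,ds\Bigr)^2=\frac{1}{2a}\Bigl(\frac{2\sin(\lambda a)}{\lambda}\Bigr)^2=\frac{4\sin^2(\lambda T^\beta/2)}{T^\beta\,\lambda^2}.$$

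It then remains only to estimate this closed form. Using $|\sin(\lambda T^\beta/2)|\le 1$ gives $\le \dfrac{4}{\lambda^2 T^\beta}$, while using $|\sin(\lambda T^\beta/2)|\le\lambda T^\beta/2$ gives $\le \dfrac{4(\lambda T^\beta/2)^2}{\lambda^2 T^\beta}=T^\beta$ (for $\lambda=0$ the integral is simply the area $T^\beta$ of the tent, consistent with the convention $\min(T^\beta,\infty)=T^\beta$); taking the smaller of the two yields the asserted $\cO\!\left[\min\!\left(T^\beta,\tfrac{1}{\lambda^2 T^\beta}\right)\right]$. I do not expect any genuine obstacle here — the only point requiring a little care is correctly carrying the normalising factor $1/(2a)=1/T^\beta$ through the self-convolution identity. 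An equivalent route, if one prefers to avoid the convolution argument, is to integrate by parts twice in $s$: the tent is continuous and compactly supported, so the boundary terms vanish; its second distributional derivative is a combination of point masses at $0$ and at $\pm T^\beta$ with weights $-2/T^\beta$ and $1/T^\beta$ respectively, whose Fourier transform is $2\bigl(\cos(\lambda T^\beta)-1\bigr)/T^\beta=-4\sin^2(\lambda T^\beta/2)/T^\beta$, and dividing by $-\lambda^2$ recovers the same formula.
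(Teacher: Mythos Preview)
Your proof is correct. The paper states this lemma without proof (it is the standard Fej\'er-kernel computation), so there is no approach to compare against; your argument via the self-convolution identity for the tent function, together with the two elementary bounds on $\sin$, is exactly the natural way to establish it.
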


Let us first consider $I_1$.
\begin{eqnarray*}\int_{2T-T^\beta}^{2T+T^\beta}\left(1-\frac{\abs{t-2T}}{T^\beta}\right)\cos^2(t\log x_m)dt&=&\int_{2T-T^\beta}^{2T+T^\beta}\left(1-\frac{\abs{t-2T}}{T^\beta}\right)\frac{1+\cos(2t\log x_m)}2dt\\
&=& \frac{T^\beta}2+\cO(T^{-\beta}\log x_m^{-2}).
\end{eqnarray*}
Hence, 
\begin{eqnarray}I_1&\geq& \left(\frac{T^\beta}2+o(1)\right)\sum_{e^{\alpha-1}\leq x_m\leq e^{\alpha+1}}\eta(m)^2
\\& \geq&\left(\frac{T^\beta}2+o(1)\right)\sum_{e^{\alpha-1}\leq x_m\leq e^{\alpha+1}}\frac{\nu(m)^2}{(x_m^{1/2}+x_m^{-1/2})^2}
\\& \geq& c_1T^\beta e^{-\alpha} \sum_{e^{\alpha-1}\leq x_m\leq e^{\alpha+1}}\nu(m)^2.\label{e:I_1}
\end{eqnarray}
The right-hand side of \eqref{e:I_1} will be estimated later.
We now turn to $I_2$, and want to show that it is much smaller than $I_1$. The integral
$$\int_{2T-T^\beta}^{2T+T^\beta}\left(1-\frac{\abs{t-2T}}{T^\beta}\right)\cos(t\log x_m)\cos(t\log x_k)dt$$
is smaller than $\frac{1}{T^\beta(\log x_m-\log x_k)^2}$. We can ensure that
$$\frac{1}{\abs{\log x_m-\log x_k}}\leq cT^\beta$$
for $e^{\alpha-1}<x_k<x_m<e^{\alpha+1}$, by choosing $\alpha$ in an appropriate range~: writing
$$\log x_m-\log x_{m-1}\sim \frac{x_m-x_{m-1}}{x_m}\sim \frac2m\sim \frac4{\sqrt{x_m}},$$
we see we have to take $\alpha\leq 2\beta\log T-C$ ($C$ large). More generally, we have by the intermediate value theorem
$$|\log x_m-\log x_{k}|\geq\tilde C e^{-\alpha/2}|m-k| .$$
The analysis
done by \cite{Hej}, pp. 310--311, can be applied {\em verbatim} to show that
\begin{equation}\label{e:I_2}\abs{I_2}\leq\frac{\tilde C}{T^\beta}\sum_{e^{\alpha-1}\leq x_m\leq e^{\alpha+1}}\nu(m)^2.\end{equation}
Comparing \eqref{e:I_1} and \eqref{e:I_2}, we see that
$$\abs{I_2}\leq \frac{I_1}{100}$$
provided $\alpha\leq 2\beta\log T-C$ with $C$ sufficiently large. Thus,
$$I\geq \frac{99}{100}I_1.$$
To complete our estimate for $I$, we must return to equation \eqref{e:I_1}. Clearly,
$$\sum_{e^{\alpha-1}\leq x_m\leq e^{\alpha+1}}\nu(m)^2\geq 
\sum_{e^{\alpha-1/2}\leq x_m\leq e^{\alpha+1/2}}\mu(m)^2.$$
We write the Cauchy-Schwarz inequality,
$$\left[\sum_{e^{\alpha-1/2}\leq x_m\leq e^{\alpha+1/2}}\mu(m)\right]^2\leq
\left(\sum_{e^{\alpha-1/2}\leq x_m\leq e^{\alpha+1/2}} 1\right)
\left(\sum_{e^{\alpha-1/2}\leq x_m\leq e^{\alpha+1/2}}\mu(m)^2\right).$$
But
$$\sum_{e^{\alpha-1/2}\leq x_m\leq e^{\alpha+1/2}}\mu(m)= \sum_{\gamma, \alpha-1/2\leq l_\gamma\leq\alpha+1/2}e^{\int_\gamma \omega}l_{\gamma_o}\geq \tilde C\,e^{\alpha {\rm Pr}(\omega)},$$
see \cite{PP}, p. 117. 
On the other hand,
$$\sum_{e^{\alpha-1/2}\leq x_m\leq e^{\alpha+1/2}} 1=\cO(e^{\alpha/2}).$$
We obtain this way
$$\sum_{e^{\alpha-1/2}\leq x_m\leq e^{\alpha+1/2}}\mu(m)^2\geq C e^{2\alpha {\rm Pr}(\omega)-\alpha/2}.$$
We have proved
$$\int_{2T-T^\beta}^{2T+T^\beta}\left(1-\frac{\abs{t-2T}}{T^\beta}\right)|S_\alpha(t)|^2 dt\geq \tilde C T^{\beta(4{\rm Pr}(\omega)-1)}.$$
\end{proof}
This implies
$$S_\alpha(t)\geq C  t^{\beta(2{\rm Pr}(\omega)-1)}$$
for some $t\in [2T-T^\beta, 2T+T^\beta]$.

Now consider the integral $\int r \tanh(\pi r) K_\alpha(r-t)dr$ or $\int r \tanh(\pi r) K_\alpha(r+t)dr$
in \eqref{e:traceshift}. We write
$$\int r \tanh(\pi r) K_\alpha(r-t)dr =\int r \tanh(\pi r)k(r-t) [e^{i\alpha (r-t)}+e^{-i\alpha (r-t)}]dr.$$
To evaluate 
$\int (r+t) \tanh(\pi (r+t))k(r) e^{i\alpha r} dr$, we shift the integral over $\IR$ to an integral over
$(\frac12-\eps)i +\IR$, and we find that the integral is $\cO(te^{-|\alpha|(\frac12-\eps)})$ for any $\eps>0$.

\begin{lem}
$\int r \tanh(\pi r) K_\alpha(r-t)dr$ or $\int r \tanh(\pi r) K_\alpha(r+t)dr=\cO(te^{-|\alpha|(\frac12-\eps)})=\cO(t^{1+\eps-\beta})$
for any $\eps>0$.
\end{lem}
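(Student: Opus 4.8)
The plan is the contour deformation announced just before the statement. First I would reduce the two integrals to a single model. Replacing the dummy variable $r$ by $r+t$ in $\int_\R r\tanh(\pi r)K_\alpha(r-t)\,dr$ turns it into $\int_\R(r+t)\tanh(\pi(r+t))K_\alpha(r)\,dr$; replacing $r$ by $r-t$ and then by $-r$ in $\int_\R r\tanh(\pi r)K_\alpha(r+t)\,dr$ turns it into the same expression, since $k$ is even, $r\mapsto r\tanh(\pi r)$ is odd, and $K_\alpha(-r)=K_\alpha(r)$. Expanding $K_\alpha(r)=k(r)[e^{i\alpha r}+e^{-i\alpha r}]$, it therefore suffices to estimate $\int_\R(r+t)\tanh(\pi(r+t))k(r)e^{i\alpha r}\,dr$ and the companion integral with $e^{-i\alpha r}$. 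Since $C$ is a fixed constant and we only need $T$ (hence $t$) large, $\alpha=2\beta\log T-C>0$, so $|\alpha|=\alpha$; I would shift the first integral into the upper half-plane and, symmetrically, the second into the lower one.

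The key analytic input is that $k$, being the inverse Fourier transform of $\hat k$ with $\supp\hat k\subset[-1,1]$, extends to an entire function, and --- choosing $\hat k$ smooth, which we are free to do --- satisfies Paley--Wiener bounds $|k(x+iy)|\le C_N(1+|x|)^{-N}e^{|y|}$ for every $N$. The function $(r+t)\tanh(\pi(r+t))$ is meromorphic with poles only at $r=-t+i(n+\tfrac12)$, $n\in\Z$, hence holomorphic on the strip $0\le\Im r\le\tfrac12-\eps$. Integrating the holomorphic integrand around the rectangle with vertices $\pm R$ and $\pm R+i(\tfrac12-\eps)$ and letting $R\to+\infty$ --- the vertical sides vanish because $(1+|\Re r|)^{-N}$ beats the linear growth of $(r+t)\tanh(\pi(r+t))$ --- one moves the contour from $\R$ to $\R+i(\tfrac12-\eps)$, picking up no residues.

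On the shifted line one estimates factor by factor: $|e^{i\alpha r}|=e^{-\alpha(1/2-\eps)}$; $|k(r)|\le C_N(1+|\Re r|)^{-N}e^{1/2-\eps}$; $|\tanh(\pi(r+t))|\le C_\eps$, since $\Im r=\tfrac12-\eps$ stays at distance $\ge\eps$ from every pole of $\tanh$; and $|r+t|\le|\Re r|+t+1$. Taking $N\ge3$ so that $\int_\R(1+|\Re r|)^{1-N}\,d(\Re r)<\infty$, the shifted integral is $\le C(t+1)\,e^{-\alpha(1/2-\eps)}$, i.e.\ $\cO(te^{-|\alpha|(1/2-\eps)})$ for $t\ge1$; the $e^{-i\alpha r}$ term is bounded the same way with the contour shifted to $\R-i(\tfrac12-\eps)$. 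Finally the conversion to a power of $t$ is bookkeeping: on the range $t\in[2T-T^\beta,2T+T^\beta]$ (with $\beta\le1$) one has $\log t=\log T+\cO(1)$, so $|\alpha|=2\beta\log t+\cO(1)$, hence $e^{-|\alpha|(1/2-\eps)}=\cO(t^{-2\beta(1/2-\eps)})=\cO(t^{-\beta+2\beta\eps})$ and $te^{-|\alpha|(1/2-\eps)}=\cO(t^{1-\beta+2\beta\eps})$; as $\eps>0$ is arbitrary this is $\cO(t^{1+\eps-\beta})$ after renaming.

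There is no real obstacle; the one point deserving care is the decay of $k$ along horizontal lines used to close the rectangle, which is why one takes $\hat k$ smooth (bare compact support of $\hat k$ only gives $|k(x+iy)|\le Ce^{|y|}$, not enough for the vertical sides to disappear). The absolute value in $|\alpha|$ is only a safety device: for the large $t$ relevant here $\alpha>0$, which is exactly why one of the two exponentials is shifted upward and the other downward.
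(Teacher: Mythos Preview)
Your proof is correct and follows exactly the approach the paper sketches just before the lemma: after the change of variables you shift the contour to $\IR+i(\tfrac12-\eps)$, use Paley--Wiener decay of $k$ to kill the vertical sides, and estimate factor by factor. One small slip in your reduction: $r\mapsto r\tanh(\pi r)$ is \emph{even}, not odd (it is a product of two odd functions); this is in fact precisely the parity you need so that $(r-t)\tanh(\pi(r-t))\big|_{r\to -r}=(r+t)\tanh(\pi(r+t))$, and your conclusion stands.
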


We finally turn to 
$\sum_j K_\alpha(r_j-t)+K_\alpha(r_j+t)$. Fixing a small $\eps>0$, one sees using Weyl's law, the fact that $k$ is rapidly decreasing in any horizontal strip -- and the fact that the $\Im m(r_j)$ are bounded -- that
$$\sum_j K_\alpha(r_j-t)=\sum_{j, |\Re e(r_j)-t|\leq t^\eps}K_\alpha(r_j-t)+\cO(t^{-\infty}).$$
Similarly,
$$\sum_j K_\alpha(r_j+t)=\sum_{j, |\Re e(r_j)+t|\leq t^\eps}K_\alpha(r_j+t)+\cO(t^{-\infty}).$$
We see that
$$|\sum_j K_\alpha(r_j-t)+K_\alpha(r_j+t)|\leq C e^{\alpha\sup_j|\Im m(r_j)|}t^{1+\eps}\leq C t^{2\beta\sup_j|\Im m(r_j)|}t^{1+\eps}$$
where the $\sup_j$ is taken over the $j$ such that $|\Re e(r_j)\pm t|\leq t^\eps.$

We have proved that there exists $t\in [T-T^\beta, T+T^\beta]$, and $r_j$ with $|\Re e(r_j)- t|\leq t^\eps,$
such that
$$t^{2\beta\sup_j|\Im m(r_j)|}t^{1+\eps}\geq \tilde C t^{\beta(2{\rm Pr}(\omega)-1)}.$$
In particular, if $T$ is large enough, this implies
$$\sup\left\{|\Im m(r_j)|, |\Re e(r_j)\pm T|\leq T^\beta\right\}\geq {\rm Pr}(\omega)-\frac12 -\frac{1+\eps}{2\beta},$$
and this proves Theorem \ref{t:q3arithm}.

\section{Symbol classes\label{s:symbols}}

Following \cite{DS}, for any $0\leq \delta <1/2$ we introduce the symbol class 
\bequ\label{e:symbol-eps}
S_\delta^{m}\defeq\set{a\in C^\infty(T^*M),\ |\partial_x^\alpha \partial_{\xi}^\beta a|\leq 
C_{\alpha,\beta}\, \hbar^{-\delta|\alpha+\beta|}\,\la\xi\ra^{m-|\beta|}}\,.
\eequ
If $\delta=0$ we will just denote $S^{m}$.
We will denote $\Op_\hbar(a(x, \xi))=\Op(a(x, \hbar\xi))$, where $\Op$ is a quantization procedure on $M$.
The quantization of any $a\in S_\delta^{0}$ leads
to a bounded operator on $L^2(M)$ (the norm being bounded uniformly in $\hbar$), see \cite{DS}. 
We will denote $\Psi DO_\delta^m=\Op_\hbar(S_\delta^{m}).$

We use~:

\begin{prop}\label{p:PDO}Let $a\in S_\delta^{m}, b\in S_{\delta'}^{n}$ with $0\leq\delta'\leq\delta<1/2$.
Then\\
(i) $\Op_\hbar(a)\Op_\hbar(b)-\Op_\hbar(ab)\in \hbar^{1-\delta-\delta'}\Op_\hbar(S_\delta^{m+n-1})$.\\
(ii) $[\Op_\hbar(a),\Op_\hbar(b)]-\frac\hbar{i}\Op_\hbar(\left\{a, b\right\})\in \hbar^{2(1-\delta-\delta')}\Op_\hbar(S_\delta^{m+n-2})$.\\
\end{prop}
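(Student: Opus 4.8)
The statement is the standard semiclassical symbolic calculus with derivative losses for the exotic classes $S_\delta^m$, and the plan is to reduce it to the flat model and then track powers of $\hbar$ in the composition formula. First I would localize: writing each symbol, by a quadratic partition of unity in $x$, as a finite sum of symbols supported in single coordinate charts, one reduces to the case $M=\IR^d$ up to $\cO(\hbar^\infty)$ remainders in $S_\delta^{-\infty}$, by pseudolocality. One also has to note that $S_\delta^m$ is invariant under changes of coordinates modulo $\hbar^{1-2\delta}S_\delta^{m-1}$; this is exactly where the hypothesis $\delta<1/2$ is used, and it is the classical reason for that restriction.

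On $\IR^d$, with the semiclassical convention $\Op_\hbar(a)u(x)=(2\pi\hbar)^{-d}\iint e^{\frac i\hbar(x-y)\cdot\xi}a(x,\xi)u(y)\,dy\,d\xi$, one has $\Op_\hbar(a)\Op_\hbar(b)=\Op_\hbar(c)$ with
\[
c(x,\xi)=\frac1{(2\pi\hbar)^d}\iint e^{-\frac i\hbar y\cdot\eta}\,a(x,\xi+\eta)\,b(x+y,\xi)\,dy\,d\eta .
\]
Taylor-expanding $\eta\mapsto a(x,\xi+\eta)$ at $0$ to order $N$ and using the distributional identity $(2\pi\hbar)^{-d}\iint e^{-\frac i\hbar y\cdot\eta}\,\eta^\alpha f(y)\,dy\,d\eta=(\hbar/i)^{|\alpha|}\partial_y^\alpha f(0)$ yields the asymptotic expansion
\[
c(x,\xi)\sim\sum_\alpha\frac1{\alpha!}\Big(\frac\hbar i\Big)^{|\alpha|}\partial_\xi^\alpha a(x,\xi)\,\partial_x^\alpha b(x,\xi).
\]
The key bookkeeping is that the $\alpha$-term carries $\hbar^{|\alpha|}$, costs $\hbar^{-\delta|\alpha|}$ from the $\xi$-derivatives of $a$ (which also lower the $\xi$-order by $|\alpha|$) and $\hbar^{-\delta'|\alpha|}$ from the $x$-derivatives of $b$, hence lies in $\hbar^{|\alpha|(1-\delta-\delta')}S_\delta^{m+n-|\alpha|}$; since $\delta+\delta'\le2\delta<1$ each extra order is a genuine gain. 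A standard non-stationary-phase argument on the integral remainder after $N$ terms --- repeated integrations by parts in $y$ and in $\eta$, with the resulting $\hbar^{-\delta}$-losses absorbed because $\delta<1/2$ --- shows that remainder lies in $\hbar^{N(1-\delta-\delta')}S_\delta^{m+n-N}$, uniformly in $\hbar$.

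With the expansion in hand, (i) is immediate: keeping only the $\alpha=0$ term, $c-ab$ is the remainder, which lies in $\hbar^{1-\delta-\delta'}S_\delta^{m+n-1}$. For (ii) I would apply the expansion with $N=2$ to both $\Op_\hbar(a)\Op_\hbar(b)$ and $\Op_\hbar(b)\Op_\hbar(a)$: the $\alpha=0$ terms $ab$ and $ba$ cancel, the $|\alpha|=1$ terms assemble --- after matching the Poisson-bracket convention $\{a,b\}=\sum_j(\partial_{\xi_j}a\,\partial_{x_j}b-\partial_{x_j}a\,\partial_{\xi_j}b)$ --- into $\frac\hbar i\{a,b\}$, and the two remainders lie in $\hbar^{2(1-\delta-\delta')}S_\delta^{m+n-2}$, which is the claim. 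The one genuinely technical point is the remainder estimate for the oscillatory integral with the $\hbar^{-\delta}$ losses carried along; the rest is routine, and the whole argument is the one given in \cite{DS}.
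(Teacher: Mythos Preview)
Your sketch is correct and is exactly the standard argument from \cite{DS}. The paper does not actually prove this proposition: it is stated without proof in Section~\ref{s:symbols} as a known input from the semiclassical calculus (the symbol classes and quantization having just been introduced following \cite{DS}), so there is nothing to compare beyond noting that your outline is the textbook proof the paper is implicitly citing.
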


We also use a local form of the Calderon-Vaillancourt estimate \cite{DS}~:
\begin{prop}\label{p:CV} There exists $K\in\IN$ depending only on the dimension of $M$, such that the following holds. Take $A=\Op_\hbar(a)$ where $a\in\Psi DO_\delta^2$. Let $I$ be an open interval of $\IR^+$ and let $\lambda$ belong to $I$. Then, there exists $C>0$, and $C(a,\lambda)$ depending on a finite number of seminorms of $a$ (uniform in $\lambda$ if it stays inside a compact subset of $I$), such that, for all $u\in L^2(M)$,
$$\norm{Au}_{L^2}\leq C \left( \sup_{p_o^{-1}(I)}|a|+\sum_{k=1}^K\hbar^k\sup_{p_o^{-1}(I)}|D^{2k} a|  \right)\norm{u}_{L^2}+C(a, \lambda) \norm{(P-\lambda)u}_{L^2}.$$
In fact $C(a, \lambda)$ is controlled by the supremum norm of $\frac{a}{p_o-\lambda}$ and a finite number of its derivatives outside $p_o^{-1}(I)$.
Similarly we have
$$\abs{\la u,Au\ra}\leq C \left( \sup_{p_o^{-1}(I)}|a|+\sum_{k=1}^K\hbar^k\sup_{p_o^{-1}(I)}|D^{2k} a|  \right)\norm{u}_{L^2}+C(a, \lambda) \norm{(P-\lambda)u}^2_{L^2}.$$
\end{prop}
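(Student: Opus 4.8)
The plan is to decompose $A$ into a piece that is microlocally supported in $p_o^{-1}(I)$ — where the hypothesis only lets us use the size of $a$ and of its derivatives on $p_o^{-1}(I)$ — and a complementary piece that is controlled by $(P-\lambda)u$. This is the strategy of the corresponding statement in \cite{DS}, and it carries over to the class $S^m_\delta$. By density it suffices to argue for $u\in C^\infty(M)$. Fix a real $\chi\in C_0^\infty(I)$, $0\le\chi\le1$, with $\chi\equiv1$ on an interval $I_0\ni\lambda$ whose closure lies in $I$, and write, using the functional calculus of the self-adjoint operator $P=-\hbar^2\lap/2$,
$$A=A\,\chi(P)+A\,(1-\chi(P)).$$

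\textbf{The far part.} Since $1-\chi$ vanishes identically near $\lambda$, the function $g(t):=(1-\chi(t))/(t-\lambda)$ is smooth on $\IR$ with $g(t)=\cO(1/t)$ at infinity, so $1-\chi(P)=g(P)(P-\lambda)$ with $g(P)$ ``of order $-2$''. Hence $A\,g(P)$ is, modulo a smoothing operator, a pseudodifferential operator of order $2-2=0$ whose symbol agrees with $a/(p_o-\lambda)$ on the support of $1-\chi(p_o)$, a set disjoint from $p_o^{-1}(I_0)$. By the $L^2$-boundedness of $\Psi DO^0_\delta$ (Calderón–Vaillancourt in $S^0_\delta$, \cite{DS}), $A\,g(P)$ is bounded on $L^2(M)$, uniformly for $\hbar\in(0,\hbar_0]$ and for $\lambda$ in a compact subset of $I$, with norm $C(a,\lambda)$ controlled by finitely many seminorms of $a/(p_o-\lambda)$ off $p_o^{-1}(I_0)$; thus
$$\norm{A(1-\chi(P))u}=\norm{A\,g(P)(P-\lambda)u}\le C(a,\lambda)\,\norm{(P-\lambda)u}.$$

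\textbf{The near part.} Here $\chi(p_o)\in S^{-\infty}$, so $a\,\chi(p_o)\in S^{-\infty}_\delta$ and $\sup|a\,\chi(p_o)|\le\sup_{p_o^{-1}(I)}|a|$. Expanding $\chi(P)$ as a semiclassical pseudodifferential operator (Helffer--Sjöstrand formula, \cite{DS}) and composing with $A=\Op_\hbar(a)$ by Proposition \ref{p:PDO}(i) gives, for any $N$,
$$A\,\chi(P)=\sum_{j=0}^{N-1}\hbar^{j}\,\Op_\hbar(\beta_j)+\cO_{L^2\to L^2}(\hbar^{\infty}),$$
with $\beta_0=a\,\chi(p_o)$ and each $\beta_j\in S^{-\infty}_\delta$ supported in $p_o^{-1}(\supp\chi)\subset p_o^{-1}(I)$; at each step one power of $\hbar$ (in fact $\hbar^{1-2\delta}$, harmless since $\delta<1/2$) is traded for two more derivatives, so by the Leibniz rule the finitely many seminorms of $\beta_j$ that matter are bounded by $C_j\sum_{|\gamma|\le 2j}\sup_{p_o^{-1}(I)}|\partial^\gamma a|$. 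Applying to each $\Op_\hbar(\beta_j)$ the sharp form of Calderón--Vaillancourt, $\norm{\Op_\hbar(\beta)}_{L^2\to L^2}\le C_d\bigl(\sup|\beta|+\sum_{k\ge1}\hbar^{k}\sup|D^{2k}\beta|\bigr)$, and re-indexing, one obtains $\norm{A\chi(P)u}\le C\bigl(\sup_{p_o^{-1}(I)}|a|+\sum_{k=1}^{K}\hbar^{k}\sup_{p_o^{-1}(I)}|D^{2k}a|\bigr)\norm{u}$, with $K\in\IN$ fixed by $d$ (through the number of derivatives required in Calderón--Vaillancourt and the truncation order). Adding the two contributions proves the norm estimate. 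The quadratic-form estimate follows by the same decomposition: the near part gives $|\langle u,A\chi(P)u\rangle|\le\norm{A\chi(P)}_{L^2\to L^2}\norm{u}^2$, bounded by the first term; for the far part one writes $1-\chi(t)=h(t)(t-\lambda)^2$ with $h$ smooth, $h(t)=\cO(1/t^2)$, puts one factor $(P-\lambda)$ on each side of a bounded operator of order $-2$, and concludes with Cauchy--Schwarz, getting the term $C(a,\lambda)\norm{(P-\lambda)u}^2$.

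The only genuinely delicate point — to be carried out exactly as in \cite{DS} — is the bookkeeping: one must check that every remainder produced by the functional calculus and the symbol composition in the near part stays microsupported inside $p_o^{-1}(I)$, so that only $a|_{p_o^{-1}(I)}$ and its derivatives there enter, while every remainder in the far part can be exhibited with an explicit factor $(P-\lambda)$ on the right and a bounded (order $\le 0$) operator in front, so that it is absorbed into $C(a,\lambda)\norm{(P-\lambda)u}$; and that all constants depend on only finitely many seminorms, uniformly in $\hbar$ small and in $\lambda$ over compact subsets of $I$. Passing from $\delta=0$ to $\delta\in[0,1/2)$ requires nothing new, since Proposition \ref{p:PDO} supplies the gain $\hbar^{1-2\delta}>\hbar^{1/2}$ at each step of both expansions.
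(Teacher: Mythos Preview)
The paper does not give its own proof of this proposition: it is stated as a ``local form of the Calder\'on--Vaillancourt estimate'' with a bare reference to \cite{DS}, and is used as a black box in \eqref{e:comm} and in the Appendix. Your argument --- localise in energy by $\chi(P)$, handle the near part by the sharp semiclassical Calder\'on--Vaillancourt bound, and factor the far part through $(P-\lambda)$ via $g(t)=(1-\chi(t))/(t-\lambda)$ --- is precisely the standard route behind such statements, and for the norm inequality it is correct as written.

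For the quadratic-form inequality your sketch is a little too brisk. Writing $1-\chi(t)=h(t)(t-\lambda)^2$ and then ``putting one factor of $(P-\lambda)$ on each side'' requires commuting $(P-\lambda)$ past $Ah(P)$, which produces the commutator $[Ah(P),P]\in\hbar^{1-\delta}\Psi DO^{-1}_\delta$; this is bounded and contributes a harmless $\cO(\hbar^{1-\delta})\norm{u}^2$ (or, after one more commutation, $\cO(\hbar^{1-\delta})\norm{u}\,\norm{(P-\lambda)u}$, which you absorb by Young's inequality), but you should say so rather than suppress it. Alternatively, inserting the cutoff on \emph{both} sides, $\langle u,Au\rangle=\langle\chi(P)u,A\chi(P)u\rangle+\langle(1-\chi(P))u,A(1-\chi(P))u\rangle+\text{cross terms}$, makes the bookkeeping cleaner: the diagonal pieces give exactly the two displayed terms, and the cross terms are handled either by the same commutator argument or by taking nested cutoffs so that they become $\cO(\hbar^\infty)$. (Note incidentally that the first term on the right of the quadratic-form estimate should carry $\norm{u}_{L^2}^2$ rather than $\norm{u}_{L^2}$, as the use made of it in the Appendix confirms.)
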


\section{Appendix~: Sj\"ostrand's proof of Proposition \ref{p:plagiat}\label{a:plagiat}}
In order to prove Proposition \ref{p:plagiat}, we first want to bound the norm and trace norm of
$$f\left(\frac{2P-1}\hbar\right)(\tQ_T-Q_T)f\left(\frac{2P-1}\hbar\right).$$
We write a Calderon-Vaillancourt type estimate,
$$\norm{(\tQ_T-Q_T)u}\leq \left(\sup_{p_o^{-1}]\frac12-\eps, \frac12+\eps[}(\tilde q^T-\tilde q^T)+\cO(\hbar^{1-2\delta})\right)\norm{u}+\cO(1)\norm{(2P-1)u},$$
where $\delta$ is as in Proposition \ref{p:ave}.
Besides, $\norm{(2P-1)f\left(\frac{2P-1}\hbar\right)}=\cO(\hbar)$. It follows that
$$\left\Vert f\left(\frac{2P-1}\hbar\right)(\tQ_T-Q_T)f\left(\frac{2P-1}\hbar\right)\right\Vert\leq
\norm{f}^2_\infty \left(\sup_{p_o^{-1}]\frac12-\eps, \frac12+\eps[}(\tilde q^T-\tilde q^T)+\cO(\hbar^{1-2\delta})\right).$$
For the trace class norm, we need to be even more careful than in \cite{Sj00}. Instead of using the G\aa rding inequality, we use the existence of a positive quantization. If we choose such, we have directly that
$f\left(\frac{2P-1}\hbar\right)(\tQ_T-Q_T)f\left(\frac{2P-1}\hbar\right)\geq 0$ in the operator sense. Thus,
\begin{eqnarray*}\left\Vert f\left(\frac{2P-1}\hbar\right)(\tQ_T-Q_T)f\left(\frac{2P-1}\hbar\right)\right\Vert_{1}&=&
\Tr\, f\left(\frac{2P-1}\hbar\right)(\tQ_T-Q_T)f\left(\frac{2P-1}\hbar\right)\\
&=& \Tr\, f\left(\frac{2P-1}\hbar\right)^2(\tQ_T-Q_T) \\
&=&\Tr\, \frac1{2\pi}\int \hat{f^2}(t)e^{it\frac{2P-1}\hbar}(\tQ_T-Q_T)dt.
\end{eqnarray*}
Writing the expansion of $e^{it\frac{2P-1}\hbar}$ as a Fourier Integral Operator, writing the trace as the integral of the kernel, and applying the stationary phase method in the time-energy variables,
we obtain an asymptotic expansion
\begin{multline*}\Tr\, \frac1{2\pi}\int \hat{f^2}(t)e^{it\frac{2P-1}\hbar}(\tQ_T-Q_T)dt\\
= C_d\hbar^{2-d}\left[\hat{f^2}(0)\int_{p_o^{-1}\left(\frac12\right)}(q^T-\tilde q^T)L_{\frac12}(d\rho)+\sum_{k=1}^{N-1}\hbar^k D^{2k}_t\hat{f^2}(0)\int_{p_o^{-1}\left(\frac12\right)}D^{2k}_\rho(q^T-\tilde q^T)L_{\frac12}(d\rho)+\cO(\hbar^{N(1-2\delta)})\right],
\end{multline*}
where $D^{2k}_t$ and $D^{2k}_\rho$ are differential operators of degree $\leq 2k$, respectively on
$\IR$ and $T^*M$. Note that the term $\hbar^k D^{2k}_t\hat{f^2}(0)\int_{p_o^{-1}\left(\frac12\right)}D^{2k}_\rho(q^T-\tilde q^T)L_{\frac12}(d\rho)$ is a $\cO(\hbar^{k(1-2\delta)})L_{\frac12}(\tilde q^T\not=q^T)=o(1)L_{\frac12}(\tilde q^T\not=q^T).$ This proves, in particular, Corollary \ref{c:maincoro}.

To finish the proof of Proposition \ref{p:plagiat}, there remains to study the invertibility of $z-\tilde\cP_T$. Recall the identity
$$\norm{(A+iB)u}^2=\norm{Au}^2+\norm{Bu}^2+i\la u, [A, B] u\ra,$$
if $A, B$ are bounded self-adjoint operators. Thus,
\begin{multline*}
2\norm{(\tilde\cP_T-z)u}^2\geq\norm{(P+i\hbar\hat Q_T-z)u}^2-\cO(\hbar^{4(1-\delta)})(\norm{(2P-1)u}^2+\norm{u}^2)\\
\geq\norm{(P-\Re e(z))u}^2+\hbar^2\norm{\left(\frac{\Im m(z)}\hbar-\hat Q_T   \right)u}^2
+i\hbar\la u, [P, \hat Q_T]u\ra \\-
\cO(\hbar^{4(1-\delta)})(\norm{(2P-1)u}^2+\norm{u}^2)\\
=\norm{(P-\Re e(z))u}^2+\hbar^2\norm{\left(\frac{\Im m(z)}\hbar-\hat Q_T   \right)u}^2\\
+\left(\cO(1)\frac{\hbar^2}T(1+\norm{f}^2_\infty)+\cO(\hbar^{3-2\delta})\right)\norm{u}^2
+\cO(\hbar^2)\norm{(P-\Re e(z))u}^2
\end{multline*}
We have used \eqref{e:comm} (or Proposition \ref{p:CV}), and the same for $\tilde Q_T$. 
We find that
\begin{equation}\label{e:real}
\sqrt{3}\norm{(\tilde\cP_T-z)u}\geq \norm{(P-\Re e(z))u}-(\cO(\frac\hbar{\sqrt T})+\cO(\hbar^{\frac32-\delta}))\norm{u}.
\end{equation}
On the other hand, we have
\begin{multline}
\Im m \left\la \frac1{\hbar}(z-\tilde\cP_T)u, u\right\ra\\
= \left\la \left(\frac{\Im m(z)}{\hbar}-\hat Q_T\right)u, u\right\ra +\cO(\hbar^{1-2\delta})(\norm{u}+\norm{(\Re e(z)-P)u})\norm{u}\\
 \left\la\left( \frac{\Im m(z)}{\hbar}- Q_T+f\left(\frac{2P-1}\hbar\right)(\tQ_T-Q_T)f\left(\frac{2P-1}\hbar\right)\right)u, u\right\ra \\+\cO(\hbar^{1-2\delta})(\norm{u}+\norm{(\Re e(z)-P)u})\norm{u}\\
= \left\la\left( \frac{\Im m(z)}{\hbar}- Q_T+f\left(\frac{2\Re e(z)-1}\hbar\right)^2(\tQ_T-Q_T)\right)u, u\right\ra \\+\cO(\hbar^{1-2\delta})(\norm{u}+\norm{(\Re e(z)-P)u})\norm{u}
 \\-\left(2\sup_{p_o^{-1}]\frac12-\eps, \frac12+\eps[}(q^T-\tilde q^T)\norm{f}_\infty\norm{f'}_\infty
 +\cO(\hbar^{1-2\delta})\right)\norm{u}\left\Vert \frac{P-\Re e(z)}\hbar u  \right\Vert
 \label{e:im}
\end{multline}
by the same trick as in \cite{Sj00}, (3.19). Recall that we are interested in a region where
$z-\frac12=\cO(\hbar).$

\vspace{.4cm}
Let $\alpha(E)>0$ be a continuous function defined on a bounded interval $J$ containing $0$, and restrict $z$ by assuming that
$$\frac{\Im m(\zeta)}{2\hbar}-q^T+f\left(\frac{\Re e(\zeta)}\hbar\right)^2(q^T-\tilde q^T)\geq \alpha\left(\frac{\Re e(\zeta)}\hbar\right), $$ near $p_o^{-1}\left(\frac12\right), \frac{\Re e(\zeta)}\hbar\in J$
(where $\zeta=2z-1$). It follows from the G\aa rding inequality that for such $z$
\begin{multline*}\left\la\left( \frac{\Im m(z)}{\hbar}- Q_T+f\left(\frac{2\Re e(z)-1}\hbar\right)^2(\tQ_T-Q_T)\right)u, u\right\ra\\
\geq \left( \alpha\left(\frac{\Re e(\zeta)}\hbar\right)-\cO(\hbar^{1-2\delta})\right)\norm{u}^2-\cO(1)
\norm{u}\norm{(P-\Re e(z))u}.
\end{multline*}
Using this in \eqref{e:im}, we get
\begin{multline}
\Im m \left\la \frac1{\hbar}(z-\tilde\cP_T)u, u\right\ra \geq \left( \alpha\left(\frac{\Re e(\zeta)}\hbar\right)-\cO(\hbar^{1-2\delta})\right)\norm{u}^2\\
 -\left(2\sup_{p_o^{-1}]\frac12-\eps, \frac12+\eps[}(q^T-\tilde q^T)\norm{f}_\infty\norm{f'}_\infty
 +\cO(\hbar^{1-2\delta})\right)\norm{u}\left\Vert \frac{P-\Re e(z)}\hbar u  \right\Vert.
\end{multline}
Reasoning as in \cite{Sj00}, (3.25) and (3.26), we find finally
\begin{multline*}
\Im m \left\la \frac1{\hbar}(z-\tilde\cP_T)u, u\right\ra \geq \left( \alpha\left(\frac{\Re e(\zeta)}\hbar\right)-\cO(\hbar^{1-2\delta})\right)\norm{u}^2\\
 -\sqrt 3\left(2\sup_{p_o^{-1}]\frac12-\eps, \frac12+\eps[}(q^T-\tilde q^T)\norm{f}_\infty\norm{f'}_\infty
 +\cO(\hbar^{1-2\delta})\right)\norm{u}\left\Vert \frac{\tilde\cP_T-z}\hbar u  \right\Vert
 \\-(\cO(\frac1{\sqrt T})+\cO(\hbar^{\frac12-\delta}))\norm{u}^2 .
\end{multline*}
and 
\begin{multline*}
\left( \alpha\left(\frac{\Re e(\zeta)}\hbar\right)+\cO(\frac1{\sqrt T})+\cO(\hbar^{\frac12-\delta})\right)\norm{u}
\\\leq \left[1+2\sqrt 3 \sup_{p_o^{-1}]\frac12-\eps, \frac12+\eps[}(q^T-\tilde q^T)\norm{f}_\infty\norm{f'}_\infty
 +\cO(\hbar^{1-2\delta})\right]\left\Vert \frac{\tilde\cP_T-z}\hbar u  \right\Vert
\end{multline*}
which finishes the proof of Proposition \ref{p:plagiat}.

\end{document}